\title{On definable subgroups of the fundamental group}
\author{Samuel M. Corson}
\theoremstyle{definition}\newtheorem*{A}{Theorem \ref{BigTheorem}}
\theoremstyle{definition}\newtheorem*{B}{Theorem \ref{ascendingnormal}}
\theoremstyle{definition}\newtheorem*{C}{Theorem \ref{freeprodPolish}}
\theoremstyle{definition}\newtheorem{bigtheorem}{Theorem}
\theoremstyle{definition}\newtheorem{theorem}{Theorem}[section]
\theoremstyle{definition}\newtheorem{corollary}[theorem]{Corollary}
\theoremstyle{definition}\newtheorem{proposition}[theorem]{Proposition}
\theoremstyle{definition}\newtheorem{definition}[theorem]{Definition}
\theoremstyle{definition}\newtheorem{question}[theorem]{Question}
\theoremstyle{definition}\newtheorem{example}{Example}
\theoremstyle{definition}\newtheorem{remark}[theorem]{Remark}
\theoremstyle{definition}
\theoremstyle{definition}\newtheorem{lemma}[theorem]{Lemma}
\theoremstyle{definition}
\theoremstyle{definition}
\theoremstyle{definition}
\theoremstyle{definition}
\newtheorem*{question*}{Question}
\newtheorem*{theorem*}{Theorem}
\newtheorem*{corollary*}{Corollary}
\newtheorem*{lemma*}{Lemma}
\def\pmc#1{\setbox0=\hbox{#1}
    \kern-.1em\copy0\kern-\wd0
    \kern.1em\copy0\kern-\wd0}
\newcommand{\diam}{\operatorname{diam}}
\newcommand{\Star}{\operatorname{Star}}
\newcommand{\W}{\mathcal{W}}
\newcommand{\HEG}{\textbf{HEG}}
\newcommand{\Po}{\mathcal{P}}
\newcommand{\card}{\operatorname{card}}
\begin{document}


\email{sammyc973@gmail.com}
\keywords{fundamental group, metric spaces, descriptive set theory}
\subjclass[2010]{Primary 14F35; Secondary 03E15}

\maketitle

\begin{abstract}  We present several new theorems concerning the first fundamental group of a path connected metric space.   Among the results proven are strengthenings of the main theorems of \cite{Sh2} and \cite{CoCo}.   A compactness theorem for the fundamental group of a Peano continuum is given.  A useful characterization for the shape kernel of a locally path connected space is presented, along with a very succinct proof of the fact that for such a space the Spanier and shape kernel subgroups coincide (see \cite{BF}).  We also show that a free decomposition of the fundamental group of a locally path connected Polish space cannot be nonconstructive.  Numerous other results and examples illustrating the sharpness of our theorems are provided.

\end{abstract}

\begin{section}{Introduction}  One way to understand a path connected topological  space is to analyze its fundamental group.  Fundamental groups are a homotopy invariant and provide a useful tool for distinguishing homotopy equivalence classes.  In understanding the fundamental group it is useful to study subgroups that are definable in terms of topology or logic or some combination of the two.  The focus of this paper is the study of the first fundamental group of metric spaces, and certain of its subgroups.  Assumptions about separability and generalizations of separability, local path connectedness, and compactness will figure prominently in our study.  Throughout this paper we take the simplifying assumption that all spaces for which a fundamental group is computed are path connected.

In Section \ref{Prelim} we give preliminary definitions and then define the characterization of subgroups of the fundamental group via the topology of the loop space.  For $G \leq \pi_1(X, x)$ we study the relationship of  $G$ to $\pi_1(X, x)$ by looking at how $\bigcup G$ sits in the loop space $L_x$.  This very simple idea yields a diversity of theorems.  The theory of open and closed subgroups which is developed in this section gives proofs of many results later in the paper (e.g. Theorems \ref{shapekernel},  \ref{Spaniertheorem}, \ref{nosurjection}; Corollary \ref{imageinnslender}).

In Section \ref{Polishspaces} we introduce concepts related to separable completely metrizable spaces- Polish spaces.  Some terminology and tools of descriptive set theory are introduced, including the class of analytic (denoted $\Sigma_1^1$) sets and generalizations thereof (what we call nice classes of sets).  The class of subgroups of type $\Sigma_1^1$ enjoys many closure properties (detailed in Theorem \ref{closureprop}).  Certain techniques of descriptive set theory yield new theorems regarding the fundamental group and some of its quotients (see Theorems \ref{unc} and \ref{Shelahgen}), of which the following gives a short catalogue of applications:

\begin{bigtheorem}\label{BigTheorem}  Suppose $X$ is a locally path connected, connected Polish space.  The following groups are of cardinality $2^{\aleph_0}$ or $\leq \aleph_0$, and in case $X$ is compact they are of cardinality $2^{\aleph_0}$ or are finitely generated:

\begin{enumerate}  \item $\pi_1(X)$

\item $\pi_1(X)/(\pi_1(X))^{(\alpha)}$ for any $\alpha<\omega_1$ (derived series)

\item $\pi_1(X)/(\pi_1(X))_n$ for any $n\in \omega$ (lower central series)

\item $\pi_1(X)/N$ where $N$ is the normal subgroup generated by squares of elements, cubes of elements, or $n$-th powers of elements.
\end{enumerate}

\noindent In case $X$ is compact then countability of the fundamental group is equivalent to being finitely presented.
\end{bigtheorem}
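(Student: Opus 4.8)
The plan is to realize each of the four families of groups as a quotient $\pi_1(X,x)/H$ with $H$ a normal subgroup of type $\Sigma_1^1$, and then to feed these quotients into the dichotomy results already developed. First I would fix a basepoint $x$ and note that, since $X$ is Polish, the based loop space $L_x\subseteq C([0,1],X)$ (with the sup metric) is a closed subspace of a Polish space, hence itself Polish, and that the homotopy relation $\simeq$ on $L_x$ is analytic, being the projection along the Polish space $C([0,1]^2,X)$ of a closed homotopy condition. Local path connectedness is what makes the loop-space characterization of Section \ref{Prelim} available, so that for the subgroups $G$ in play the set $\bigcup G\subseteq L_x$ is the object whose descriptive complexity we track. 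In these terms $\pi_1(X,x)=L_x/{\simeq}$, and each group in (1)--(4) is a further quotient by an explicit normal $H$.

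The crux is to verify that each such $H$ is of type $\Sigma_1^1$, after which Theorem \ref{closureprop} keeps the associated quotient equivalence relation analytic and Theorems \ref{unc} and \ref{Shelahgen} supply the two alternatives. Item (1) is the case $H=\{1\}$. For item (4), the set of $n$-th powers is a continuous image of $L_x$ under $n$-fold concatenation, hence $\Sigma_1^1$, and the normal subgroup it generates is a countable union of continuous images of finite products and conjugations of this set, hence $\Sigma_1^1$ by the closure properties of Theorem \ref{closureprop}; the square and cube cases are the instances $n=2,3$. Items (2) and (3) call for an inductive bookkeeping: I would show by induction that every term $(\pi_1)^{(\alpha)}$ of the derived series ($\alpha<\omega_1$) and every term $(\pi_1)_n$ of the lower central series ($n<\omega$) is of type $\Sigma_1^1$. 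At successor stages the commutator subgroup $[H,K]$ of two $\Sigma_1^1$ subgroups is generated by the $\Sigma_1^1$ set of basic commutators $[h,k]$ (a continuous image of a product) and is therefore itself $\Sigma_1^1$; at a countable limit ordinal $\lambda<\omega_1$ one has $(\pi_1)^{(\lambda)}=\bigcap_{\beta<\lambda}(\pi_1)^{(\beta)}$, a countable intersection of $\Sigma_1^1$ subgroups, again $\Sigma_1^1$.

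I expect the limit stage of the derived-series induction to be the main obstacle. The point is that $\Sigma_1^1$-ness must survive every $\alpha<\omega_1$, and this rests entirely on the stability of the class $\Sigma_1^1$ under countable intersections recorded in Theorem \ref{closureprop}; crucially, since we only need each fixed $(\pi_1)^{(\alpha)}$ to be analytic, the recursion never has to intersect uncountably many sets at once, so no unbounded accumulation of complexity occurs. One must also check that the commutator-subgroup step genuinely lands in $\Sigma_1^1$ at the level of the loop space, i.e.\ that the group operations realized by concatenation and reversal of loops are continuous on $L_x$ so that Theorem \ref{closureprop} applies verbatim. Granting all of this, every $H$ arising in (1)--(4) is $\Sigma_1^1$, so $\pi_1(X,x)/H$ is a $\Sigma_1^1$ quotient; Theorem \ref{unc} then forces cardinality $\leq\aleph_0$ or $2^{\aleph_0}$, and when $X$ is a Peano continuum Theorem \ref{Shelahgen} sharpens the countable alternative to finite generation.

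Finally, for the equivalence in the compact case I would argue both directions. Finite presentability trivially gives countability. Conversely, if $\pi_1(X)$ is countable then the compact dichotomy just obtained makes it finitely generated, and to promote finite generation to finite presentability I would invoke the compactness theorem for the fundamental group of a Peano continuum announced in the introduction, which prevents such a finitely generated group from needing infinitely many independent relations. I would close by observing that compactness of $X$ is indispensable here: in the merely Polish setting one cannot in general upgrade finite generation (or even countability) to finite presentability, so the equivalence genuinely belongs to the Peano continuum case.
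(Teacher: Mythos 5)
Your treatment of parts (1)--(4) is essentially the paper's own argument: you verify that each normal subgroup in question is $\Sigma_1^1$ via the closure properties of Theorem \ref{closureprop} (the trivial subgroup, word-subgroups for $n$-th powers, commutator subgroups at successor steps, countable intersections at limit ordinals), and then feed the quotients into Theorem \ref{unc} in the general Polish case and Theorem \ref{Shelahgen} in the compact case. That part is correct and matches the paper.

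The gap is in your proof of the final sentence, where you promote finite generation of a countable $\pi_1(X)$ to finite presentation by invoking ``the compactness theorem announced in the introduction,'' i.e.\ Theorem \ref{ascendingnormal}. That theorem cannot do this job: it asserts that $\pi_1(X)$ is not the union of a strictly increasing chain of $\Po$ normal subgroups of \emph{itself}, and for a finitely generated group this assertion is vacuous --- any increasing chain of normal subgroups whose union is the whole group must contain a finite generating set at some finite stage and hence stabilize, whether or not the group is finitely presented. Relations live in a normal subgroup of a free group $F_k$ mapping onto $\pi_1(X)$, not in a normal subgroup of $\pi_1(X)$, so failure of finite presentability produces an ascending chain in $F_k$, to which Theorem \ref{ascendingnormal} says nothing. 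Indeed, Example \ref{bouquetexample} shows that no mechanism of this kind can separate finitely generated from finitely presented: the lamplighter group is the quotient of the fundamental group of a Peano continuum by an open normal subgroup, is $2$-generated, and is not finitely presented. The paper instead obtains finite presentability of $\pi_1(X)$ itself by citing Theorem 7.3 of \cite{CC}, an external result of Cannon and Conner that is genuinely needed here; your argument must invoke that theorem (or reprove it) rather than Theorem \ref{ascendingnormal}.
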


The compact case of part (1) is the main result of \cite{Sh2} and part (3) with $n=1$ is the main result of \cite{CoCo}.  In Section \ref{Polishspaces} we also give a compactness-type theorem for Peano continua (the reader may subsitute $\Sigma_1^1$ for $\Po$):

\begin{bigtheorem}\label{ascendingnormal}  If $X$ is a Peano continuum there does not exist a strictly increasing infinite sequence of $\Po$ normal subgroups $\{G_n\}_{n\in \omega}$ of $\pi_1(X)$ such that $\bigcup_{n\in \omega} G_n = \pi_1(X)$.
\end{bigtheorem}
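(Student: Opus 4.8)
The plan is to prove the contrapositive by exploiting the compactness of a Peano continuum together with the topological characterization of $\mathcal{P}$ subgroups developed in Section~\ref{Prelim}. Suppose toward a contradiction that $\{G_n\}_{n\in\omega}$ is a strictly increasing sequence of $\mathcal{P}$ normal subgroups whose union is all of $\pi_1(X)$. The key idea is that a $\mathcal{P}$ subgroup $G_n$ corresponds to the subset $\bigcup G_n$ of the loop space $L_x$, and the ascending union condition says that every loop based at $x$ lands in $\bigcup G_n$ for some $n$. I would first recall that for a Peano continuum the loop space $L_x$ (with the compact-open or uniform metric) is itself Polish, and that the evaluation of the covering structure lets us treat membership in each $G_n$ as a condition on loops with good definability and closure properties inherited from the class $\mathcal{P}$.

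The heart of the argument will be a Baire category or compactness dichotomy. First I would observe that, since $X$ is compact and locally path connected (a Peano continuum), loops in $X$ admit uniform control: every loop is homotopic to one traversing a finite $\varepsilon$-net of short paths, so the loop space is, up to homotopy, governed by finitely much combinatorial data at each scale. Next I would argue that the strictly increasing, covering hypothesis forces the sets $\bigcup G_n$ to be a strictly increasing cover of $L_x$ by sets each of which is $\mathcal{P}$ (hence, being $\Sigma_1^1$ or in a nice class, possessing the Baire property). By a Baire category argument, some $\bigcup G_n$ must be nonmeager, and using the group structure together with a Pettis-type or Steinhaus-type lemma for subgroups with the Baire property, one concludes that this $G_n$ is in fact open in $\pi_1(X)$, i.e. $\bigcup G_n$ contains a neighborhood of the constant loop in $L_x$.

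The main obstacle, and where the compactness of $X$ is indispensable, will be converting ``$\bigcup G_{n_0}$ is open'' into ``$G_{n_0}=\pi_1(X)$,'' which contradicts strict increase. The plan here is to use that an open subgroup of $\pi_1(X)$ has index equal to the (finite) number of its cosets detected by a covering of the compact space $X$ by finitely many basic open loop-neighborhoods; more precisely, openness of $G_{n_0}$ together with compactness forces $G_{n_0}$ to have finite index, so only finitely many of the $G_n$ can properly contain it, contradicting that they strictly increase to fill $\pi_1(X)$ while all lying above $G_{n_0}$ once $n\geq n_0$. I expect the delicate point to be verifying that the Baire property genuinely transfers from the $\mathcal{P}$ (or $\Sigma_1^1$) hypothesis on $G_n$ to the subset $\bigcup G_n\subseteq L_x$ in a way compatible with the group operation, so that the Steinhaus--Pettis mechanism applies; this is precisely where the closure properties catalogued in Theorem~\ref{closureprop} and the open/closed subgroup theory of Section~\ref{Prelim} will do the work, and I would lean on them rather than reproving definability by hand.
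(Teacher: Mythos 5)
Your high-level architecture (Baire category forces some $\bigcup G_{n_0}$ to be nonmeager; nonmeagerness plus definability should force openness; openness plus compactness kills strict increase) resembles the paper's dichotomy, but the central step is a genuine gap. The Pettis--Steinhaus mechanism you invoke has no home in this setting: $L_x$ is not a topological group (concatenation is not even a group operation on the level of loops), and the translation maps $l \mapsto l \ast h$ are continuous injections whose images are nowhere dense in $L_x$ (they consist of loops with a prescribed parametrization on $[\frac{1}{2},1]$), so they are not category-preserving homeomorphisms and the classical argument ``nonmeager subgroup with BP contains a neighborhood of the identity'' cannot be run. Likewise $\pi_1(X)$ with the quotient topology is only a quasitopological group, is not Polish, and nothing transfers the Baire property of $\bigcup G_n \subseteq L_x$ into a category statement about $\pi_1(X)$ itself. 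So ``some $\bigcup G_{n_0}$ is nonmeager with BP, hence $G_{n_0}$ is open'' is precisely the unproven lemma your argument rests on; Theorem \ref{closureprop} and the open/closed subgroup theory of Section \ref{Prelim} do not supply it. The paper's substitute for Steinhaus--Pettis is the infinite-concatenation device: after showing that each $G_n$ is non-open (else, by Theorem \ref{Shelahgen}, the quotient is finitely generated and the chain stabilizes) and hence, by Lemma \ref{openorsequence} together with a basepoint adjustment, that there are arbitrarily small loops at a common basepoint outside each $G_n$, one builds a continuous map $\alpha \mapsto l^{\alpha}$ from $\{0,1\}^{\omega}$ into $L_x$ by concatenating such loops of shrinking diameter. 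Coordinate switches of the Cantor set are genuine homeomorphisms, and switching one coordinate changes $[l^{\alpha}]$ by a conjugate of some $[l_p]^{\pm 1}$, which normality converts into membership statements about $[l_p]$; the Baire-category argument is then run on $\{0,1\}^{\omega}$ (Lemma \ref{ascending}, with the rank function $\phi(l)=\min\{k : l \in N_k\}$ and loops of rapidly growing rank), not on $L_x$ or on $\pi_1(X)$. That transfer of the category argument to the Cantor set is the key idea your proposal is missing, and it cannot be replaced by citing closure properties.

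A second, more local error: ``openness of $G_{n_0}$ together with compactness forces $G_{n_0}$ to have finite index'' is false. For $X = S^1$ the trivial subgroup is open (Proposition \ref{slsc}) yet has infinite index $\mathbb{Z}$. What is true, and suffices, is Theorem \ref{Shelahgen} (via the Cannon--Conner theorem): an open normal subgroup of the fundamental group of a Peano continuum has finitely generated quotient. Given that, if $\pi_1(X)/G_{n_0}$ is generated by the cosets of $g_1, \ldots, g_k$, then since $\bigcup_n G_n = \pi_1(X)$ and the chain increases, all $g_i$ lie in some $G_m$ with $m \geq n_0$, whence $G_m = \pi_1(X)$, contradicting strictness. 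This stabilization argument is exactly how the paper's proof begins, but run in the contrapositive: strict increase rules out finitely generated quotients, hence rules out open $G_n$, and that is what feeds the Cantor-set construction.
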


In Section \ref{comonstergroups} we present an application of Theorem \ref{ascendingnormal}.  We define a comonster group to be a group which is not the normal subgroup closure of any finite subset and a $\kappa$-comonster group is not the normal subgroup closure of any set of cardinality $<\kappa$.  Theorem \ref{ascendingnormal} implies that if the fundamental group of a Peano continuum is co-monster then it is $\aleph_1$-comonster (Theorem \ref{comonster}).  Examples are presented of this phenomenon and a curious tie to finitely presented groups is drawn (Theorem \ref{finitelypresentedgroups}).

In Section \ref{Someexamples} we compute the complexity of some commonly used subgroups of the fundamental group.  The shape kernel is shown to be closed, and if the space is locally path connected the shape kernel is the intersection of all clopen subgroups (Theorem \ref{shapekernel}).  The Spanier subgroup is shown to be equal to the shape kernel in case the space is locally path connected, and is shown to be $\Sigma_1^1$ in case the space is compact (Theorem \ref{Spaniertheorem}).  That the Spanier and shape kernel subgroups coincide for locally path connected paracompact Hausdorff spaces is a recent theorem of Brazas and Fabel \cite{BF}.  Though our theorem is slightly less general, the proof is rather shorter than that of \cite{BF}.

In Section \ref{freeprod} we introduce n-slender groups (see \cite{E1}), give such groups an alternative characterization, and present some theorems using the theory of open and closed subgroups.  Among the results of the section is the fact that a locally path connected separable metric space cannot have fundamental group that is an uncountable free product of nontrivial groups (Corollary \ref{nouncountabledecomp}).  We also prove the following: 

\begin{bigtheorem}  \label{freeprodPolish}  Suppose $X$ is locally path connected Polish and $\pi_1(X) \simeq *_{i\in I}G_i$ with each $G_i$ nontrivial.  The following hold:

\begin{enumerate}\item$\card(I)\leq \aleph_0$

\item Each retraction map $r_j:*_{i\in I}G_i \rightarrow G_j$ has analytic kernel.

\item  Each $G_j$ is of cardinality $\leq \aleph_0$ or $2^{\aleph_0}$.

\item  The map $ *_{i\in I}G_i \rightarrow \bigoplus_{i\in I}G_i$ has analytic kernel.

\end{enumerate}
\end{bigtheorem}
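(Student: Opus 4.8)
The plan is to treat part (1) as essentially known, to reduce parts (3) and (4) to part (2), and to concentrate on part (2), which carries the real content.

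Part (1) is immediate: a Polish space is separable and metrizable, so Corollary \ref{nouncountabledecomp} forbids $\pi_1(X)$ from being an uncountable free product of nontrivial groups, whence $\card(I)\leq\aleph_0$. I record that $I$ is countable, since this is used repeatedly below. Granting part (2), parts (3) and (4) then follow formally. For (3), each retraction exhibits $G_j$ as the quotient $\pi_1(X)/\ker r_j$ of $\pi_1(X)$ by an analytic normal subgroup, and the cardinality dichotomy for such quotients (the engine behind Theorem \ref{BigTheorem}, namely Theorems \ref{unc} and \ref{Shelahgen}) gives that $\card(G_j)$ is $\le\aleph_0$ or $2^{\aleph_0}$. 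For (4), the natural map $*_{i\in I}G_i\to\bigoplus_{i\in I}G_i$ sends $g$ to the tuple $(r_i(g))_{i\in I}$, which has finite support because $g$ has finite syllable length, so its kernel is $\bigcap_{i\in I}\ker r_i$; as $I$ is countable and the class of $\Sigma_1^1$ subgroups is closed under countable intersection (Theorem \ref{closureprop}), this kernel is analytic.

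It remains to prove (2). Fix $j$ and write $F_j=*_{i\neq j}G_i$, so that $\ker r_j$ is the normal closure of $\bigcup_{i\neq j}G_i$ in $\pi_1(X)$. Working in the loop space $L_x$, where path--homotopy is an analytic equivalence relation and concatenation and inversion of loops are continuous, I would first reduce to showing that each free factor $G_i$ is a $\Sigma_1^1$ subgroup, i.e. that $\bigcup G_i\subseteq L_x$ is analytic. Granting this, $P:=\bigcup\big(\bigcup_{i\neq j}G_i\big)$ is a countable union of analytic sets, hence analytic. For each $m$ the word map $W_m\colon L_x^{m}\times P^{m}\to L_x$ sending $(\ell_1,p_1,\dots,\ell_m,p_m)$ to the concatenation of the loops $\ell_k p_k\ell_k^{-1}$ is continuous, and an element of $\pi_1(X)$ lies in $\ker r_j$ exactly when it is represented by a loop homotopic to some $W_m$-value. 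Thus $\bigcup\ker r_j$ is the path--homotopy saturation of $\bigcup_m\operatorname{im}(W_m)$; since continuous images, countable unions, and saturations under an analytic equivalence relation all preserve the class $\Sigma_1^1$ (Theorem \ref{closureprop}), $\ker r_j$ is analytic.

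The main obstacle is therefore the claim that each free factor $G_i$ is $\Sigma_1^1$ (equivalently, that the retractions $r_i$ are definable in the appropriate sense). The difficulty is that the decomposition $\pi_1(X)\simeq *_{i\in I}G_i$ is purely algebraic, so there is a priori no topological handle on which loops represent single--syllable elements of a given factor. To attack this I would use the theory of open and closed subgroups together with the n--slender machinery of the free--product section: the aim is to show that the abstract reduced--word structure can be read off analytically from loops, by producing for $[\ell]\in G_i$ a witness --- a decomposition of $\ell$ into sub-loops --- whose existence is an analytic condition, and by using n--slenderness to control the infinite concatenations coming from $\pi_1(\HEG)$ and thereby rule out the pathological, non-definable ways in which a loop could distribute across the factors. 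I expect this \emph{definability} statement, rather than the bookkeeping of the preceding paragraph, to be the crux of the argument.
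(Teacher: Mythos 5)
Your handling of parts (1), (3) and (4) coincides with the paper's: (1) is Corollary \ref{nouncountabledecomp}, (3) follows from (2) via Theorem \ref{unc}, and (4) writes the kernel of $*_{i\in I}G_i \rightarrow \bigoplus_{i\in I}G_i$ as $\bigcap_{i\in I}\ker(r_i)$, a countable intersection of analytic subgroups. The gap is in part (2), which you yourself identify as carrying the real content. You reduce (2) to the claim that each free factor is an analytic subgroup of $\pi_1(X)$, i.e.\ that $\bigcup \phi^{-1}(G_i)$ is $\Sigma_1^1$ in $L_x$, and then leave that claim unproven, offering only the hope that n-slenderness and ``witnesses'' will settle it. But this claim is at least as strong as (2) itself (given it, Lemma \ref{normalclosure} finishes immediately, as you note), it is never established in the paper, and there is no evident route to it: the decomposition $\pi_1(X)\simeq *_{i\in I}G_i$ is purely algebraic, and n-slenderness cannot be applied to the factors because the $G_i$ are arbitrary nontrivial groups, not assumed n-slender, so Lemma \ref{altnslender} says nothing about them. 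In short, your proposal replaces the theorem by an unproven statement that is, if anything, harder.

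The paper proves (2) without ever showing the factors are analytic. Reducing to an isomorphism $\phi:\pi_1(X,x)\rightarrow G*H$ with retraction $r:G*H\rightarrow G$, it uses Lemma \ref{coverforfreeprod} (a consequence of Eda's theorem on homomorphisms from $\HEG$ into free products) to get an open cover $\mathcal{U}_0$ by balls such that each $\phi(\pi_1(B_y,y))$ lies in a conjugate of $G$ or of $H$; refining to a countable $2$-set simple cover and applying the Cannon--Conner theorem, the quotient $\pi_1(X,x)/\pi_1(\mathcal{U}_0,x)$ by the Spanier-type subgroup is countable. Choosing coset representatives $\{w_n\}_{n\in\omega}$ and letting $\{h_m\}_{m\in\omega}\subseteq H$ be the $H$-letters occurring in them, an algebraic computation with the retraction $r_H$ yields $\langle\langle H\rangle\rangle = \langle\langle \{h_m\}_{m\in\omega}\rangle\rangle\,\phi(\pi_1(\mathcal{U}_H,x))$, where $\mathcal{U}_H$ consists of the cover elements mapping into conjugates of $H$. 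Hence $\ker(r\circ\phi)$ is the product of the normal closure of a countable set (analytic by Theorem \ref{closureprop}) with a Spanier-type subgroup of a countable cover (analytic by Lemma \ref{Spanierlemma}), and so is analytic. The definability is thus extracted from the behavior of small loops plus a countability argument, not from any definability of the factors; to complete your proposal you would need either to prove your factor-analyticity claim -- which is open as far as this paper is concerned -- or to replace it by an argument of this kind.
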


This theorem can be interpreted to mean that no free decomposition of a fundamental group as in the hypotheses can be non-constructive.  This is rather surprising in light of the fact that a direct sum decomposition of the fundamental group can be non-constructive (see discussion in Section \ref{freeprod}).

In Section \ref{conclusion} we give a brief discussion of what are called nice pointclasses (these agree with the $\Po$ that is found in some of the theorems stated so far).  We discuss which pointclasses can be assumed to be nice (consistent with the standard axioms of set theory), and hence to which subgroups we can consistently apply the theorems of Sections \ref{Polishspaces} and \ref{comonstergroups}.

Our discussions  will not avoid references to abstract combinatorial set theory.  This should not be surprising as topology is `visual set theory.'  Also, many facets of descriptive set theory are directly influenced by the model of set theory in which one is working.  We will keep our references to set theory simple, doing little beyond illustrating the sharpness of our results until the discussion in Section \ref{conclusion}.

Let \textbf{ZFC} denote the Zermelo-Fraenkel axioms of set theory including the axiom of choice.  We assume \textbf{ZFC} throughout this paper, and also assume that \textbf{ZFC} is consistent so as to avoid repetition of the phase ``if \textbf{ZFC} is consistent then there exists a model $\ldots$''  The parameter $\kappa$ will be used for infinite cardinals, $\kappa^+$ denotes the successor cardinal.  Let \textbf{CH} denote the continuum hypothesis: $\aleph_0^+ = 2^{\aleph_0}$, and \textbf{GCH} denote the generalized continuum hypothesis: $(\forall \kappa)[\kappa^+ = 2^{\kappa}]$.
\end{section}

\begin{section}{Preliminaries}\label{Prelim}

In this section we present some of the basic definitions and notation for fundamental groups in this paper.  We then give some lemmas about open and closed subgroups of the fundamental group which will be used throughout.

Given a topological space $X$ and distinguished point $x\in X$ we obtain the fundamental group $\pi_1(X, x)$ as follows.  A loop based at $x$ is a continuous function $l: ([0,1], \{0,1\}) \rightarrow (X, x)$.  Two loops $l_0$ and $l_1$ at $x$ are homotopic if there exists a continuous function $H:[0,1]\times [0,1] \rightarrow X$ called a homotopy such that $H(s, 0) = l_0(s)$, $H(s, 1) = l_1(s)$ and $H(0, t) = H(1, t) = x$ for all $s, t\in [0,1]$.  The relation defined by homotopy is an equivalence relation.  Letting $L_x$ denote the set of all loops at $x$ in $X$ we have the binary operation concatenation, denoted $*$, on $L_x$ defined by $l_0*l_1(s) = \begin{cases}l_0(2s)$ if $s\in [0, \frac{1}{2}]\\l_1(2s-1)$ if $s\in [\frac{1}{2}, 1]  \end{cases}$.  This definition also works as a partial binary operation on paths, defined whenever the first path ends where the second path begins.  For specificity, we mean $l_0 * (l_1 *(\cdots *(l_{n-1}*l_n)\cdots)$ when we write $l_0*l_1*\cdots *l_n$.  There is also a unary operation $^{-1}$ given by $l^{-1}(s) = l(1-s)$.  The fundamental group is the set $L_x$ modulo homotopy, the binary operation is given by $[l_0]*[l_1] = [l_0*l_1]$, the equivalence class of the constant loop is the identity and inverses are given by $[l]^{-1} = [l^{-1}]$.  Clearly the fundamental group $\pi_1(X, x)$ is the same as the fundamental group of $\pi_1(C, x)$ where $C$ is the path component of $x$.  \textit{We shall only consider fundamental groups of spaces which are path connected.}

We assume some familiarity with notions in topology such as metrizability and separability.  Let $Z$ be a topological space.  A \textbf{pointclass} is a collection $\Po$ of subsets of $Z$ that are of a particular topological description, usually in terms of countable unions, countable intersections, complements, or projections.  For example, the collection of open subsets (topology) of $Z$, the collection of closed sets of $Z$, and the collection of countable unions of closed sets of $Z$ are all pointclasses of $Z$.  Another example is the class of Borel subsets of $Z$.  When we restrict our attention to specific types of topological spaces, we get more information about sets in pointclasses.

Take $(X, d)$ to be a metric space with distinguished point $x\in X$.  Topologize $L_x$ by the $\sup$ metric: the distance between loops $l_0$ and $l_1$ is $\sup_{s\in [0,1]} d(l_0(s),l_1(s))$.  Since uniform convergence is equivalent to convergence in the compact-open topology, we may suppress the particular metric $d$ on the space $X$ (since any other compatible metric gives the same topology on $L_x$).

\begin{definition}  A subgroup $G \leq \pi_1(X, x)$ is of pointclass $\Po$ if the collection of loops belonging to elements of $G$ is in the pointclass $\Po$ in $L_x$.  In other words, $G\leq \pi_1(X, x)$ is of pointclass $\Po$ if $\bigcup G$ is in pointclass $\Po$ in $L_x$.
\end{definition}

We establish some lemmas.  Lemmas \ref{supergroup}, \ref{relations}, and \ref{openclosed} should remind the reader of the analogous facts for topological groups.

\begin{lemma}\label{supergroup}  If $G\leq \pi_1(X, x)$ is open and $G \leq H \leq \pi_1(X, x)$ then $H$ is open.
\end{lemma}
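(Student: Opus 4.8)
The plan is to imitate the standard fact for topological groups, where a subgroup containing an open subgroup is automatically open because it breaks up as a union of cosets, each of which is open. The catch, which I expect to be the only genuine obstacle, is that $L_x$ under concatenation is not literally a topological group: concatenation is associative only up to homotopy, and inverses cancel only up to homotopy, so the usual ``translation is a homeomorphism'' argument is not available on the nose. What I would exploit instead is that one-sided concatenation by a fixed loop is honestly continuous on $L_x$, and that this alone suffices to transport an open set, even though the corresponding map is not invertible as a self-map of $L_x$.

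Concretely, let $l$ be any loop with $[l]\in H$; I would produce an open neighborhood of $l$ contained in $\bigcup H$. Define $f\colon L_x \to L_x$ by $f(l') = l' * l^{-1}$. This map is continuous in the sup metric (indeed distance nonincreasing: $f(l')$ and $f(l'')$ agree on $[\frac{1}{2},1]$ and on $[0,\frac{1}{2}]$ differ by at most the distance between $l'$ and $l''$). Since $l * l^{-1}$ is null-homotopic, $f(l)$ represents the identity of $\pi_1(X,x)$, which lies in $G$; hence $f(l) \in \bigcup G$. Because $\bigcup G$ is open and $f$ is continuous, $f^{-1}(\bigcup G)$ is an open set containing $l$.

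It then remains to check the inclusion $f^{-1}(\bigcup G) \subseteq \bigcup H$. If $l' \in f^{-1}(\bigcup G)$ then $[l' * l^{-1}] \in G$, and since $(l'*l^{-1})*l \simeq l'$ we have $[l'] = [l'*l^{-1}]\,[l]$. The first factor lies in $G \leq H$ and the second in $H$, so $[l'] \in H$, that is, $l' \in \bigcup H$. Thus every loop representing an element of $H$ carries an open neighborhood lying inside $\bigcup H$, so $\bigcup H$ is open and $H$ is an open subgroup.

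The main point I would stress is exactly the workaround for the failure of a genuine group structure: one is tempted to treat $L_x$ as a topological group and invoke homeomorphic translations, but $f$ has no strict continuous inverse. The resolution is that $f$ need not be invertible, only continuous, so pulling back the open set $\bigcup G$ is enough; the group-theoretic cancellation is then performed at the level of homotopy classes, where it is valid. This is precisely the ``analogue of the topological group'' phenomenon referred to in the surrounding text, and the same continuity-plus-cancellation idea will recur in the later lemmas.
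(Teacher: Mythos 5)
Your proof is correct and is essentially the paper's argument: both exploit that concatenation with the fixed loop $l^{-1}$ is continuous, that $l*l^{-1}$ lands in the open set $\bigcup G$, and then cancel at the level of homotopy classes using $G\leq H$. The only cosmetic difference is that the paper runs the argument with sequences and an $\epsilon$-ball around $l*l^{-1}$, whereas you pull back $\bigcup G$ under $l'\mapsto l'*l^{-1}$; the underlying mechanism is identical.
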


\begin{proof}  Let $G$ be open and $l\in \bigcup H$ with $\{l_n\}_{n\in \omega}$ a sequence in $L_x$ converging to $l$.  Since $l*l^{-1}\in \bigcup G$ there exists $\epsilon>0$ such that $B(l*l^{-1}, \epsilon) \subseteq \bigcup G$.  The sequence $\{l*l_n^{-1}\}_{n\in \omega}$ is eventually in $B(l*l^{-1}, \epsilon)$, so that $\{l*l_n^{-1}\}_{n\in \omega}$ is eventually in $\bigcup G \subset \bigcup H$, so $\{l_n^{-1}\}_{n\in \omega}$ is eventually in $\bigcup H$, so $\{l_n\}_{n\in \omega}$ is eventually in $\bigcup H$.
\end{proof}

\begin{lemma}\label{relations}  If $\Po$ is closed under continuous preimages and $H \leq \pi_1(X, x)$ is $\Po$ then:

\begin{enumerate}

\item The equivalence relations $E,R \subseteq L_x\times L_x$ defined by $l_0 E l_1$ iff $[l_0]H = [l_1]H$ and $l_0 R l_1$ iff $H[l_0] = H[l_1]$ are $\Po$.

\item Each equivalence class in $E$ and $R$ is $\Po$.

\end{enumerate}

\noindent By $[l]H$ we mean the set of all loops based at $x$ which are homotopic to a loop of the form $l \ast l'$ where $l'\in \bigcup H$ and the definition for $H[l]$ is analogous.

\end{lemma}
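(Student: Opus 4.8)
The plan is to realize each relation as the preimage of $\bigcup H$ under a continuous self-map of (a product of) loop spaces, and then invoke the assumed closure of $\Po$ under continuous preimages. The guiding observation is the standard coset computation: in the group $\pi_1(X,x)$ we have $[l_0]H = [l_1]H$ precisely when $[l_1]^{-1}[l_0] \in H$, and $[l_1]^{-1}[l_0] = [l_1^{-1} * l_0]$, so that $l_0 E l_1$ holds if and only if $l_1^{-1} * l_0 \in \bigcup H$. Symmetrically, $H[l_0] = H[l_1]$ iff $[l_0][l_1]^{-1} = [l_0 * l_1^{-1}] \in H$, i.e. $l_0 R l_1$ holds iff $l_0 * l_1^{-1} \in \bigcup H$. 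Thus I would define $\phi \colon L_x \times L_x \to L_x$ by $\phi(l_0, l_1) = l_1^{-1} * l_0$ and $\psi \colon L_x \times L_x \to L_x$ by $\psi(l_0, l_1) = l_0 * l_1^{-1}$, and record that $E = \phi^{-1}(\bigcup H)$ and $R = \psi^{-1}(\bigcup H)$.

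The step that genuinely requires care — and which I expect to be the main (if modest) obstacle — is verifying that $\phi$ and $\psi$ are continuous in the $\sup$ metric. Inversion $l \mapsto l^{-1}$ is an isometry, since reparametrizing by $s \mapsto 1-s$ does not change the supremum of pointwise distances. For concatenation, a direct computation splitting $[0,1]$ at $\frac{1}{2}$ shows that $d_{\sup}(l_0 * l_1, l_0' * l_1') = \max\bigl(d_{\sup}(l_0, l_0'),\, d_{\sup}(l_1, l_1')\bigr)$, so concatenation is non-expansive with respect to the max metric on $L_x \times L_x$ and in particular continuous. Hence $\phi$ and $\psi$, being built by composing these two maps with coordinate projections, are continuous, and part (1) follows immediately: since $\bigcup H$ is $\Po$ and $\Po$ is closed under continuous preimages, both $E = \phi^{-1}(\bigcup H)$ and $R = \psi^{-1}(\bigcup H)$ are $\Po$.

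For part (2), I would deduce the statement about classes from part (1) by taking sections. Fixing $l_1 \in L_x$, the map $\iota_{l_1} \colon L_x \to L_x \times L_x$ given by $\iota_{l_1}(l_0) = (l_0, l_1)$ is continuous (indeed an isometric embedding onto a slice), and the $E$-class of $l_1$ is exactly $\iota_{l_1}^{-1}(E)$; since $E$ is $\Po$ and $\Po$ is closed under continuous preimages, this class is $\Po$. The same argument applied to $R$ handles the $R$-classes. (Equivalently, one can observe directly that the $E$-class of $l_1$ equals $\phi_{l_1}^{-1}(\bigcup H)$ for the continuous map $\phi_{l_1}(l_0) = l_1^{-1} * l_0$, which matches the description of $[l_1]H$ given in the concluding note of the statement.) This completes both parts, with the only nontrivial ingredient being the elementary continuity of the concatenation and inversion operations.
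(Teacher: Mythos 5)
Your proposal is correct and takes essentially the same approach as the paper: both realize $E$ and $R$ as continuous preimages of $\bigcup H$ under concatenation--inversion maps $L_x \times L_x \rightarrow L_x$, and obtain the equivalence classes as preimages of $\bigcup H$ under maps of the form $l \mapsto l_1^{-1} * l$ (your parenthetical remark is precisely the paper's argument for part (2)). The only differences are cosmetic: you verify the continuity of concatenation and inversion explicitly, which the paper treats as evident, and you alternatively package part (2) as restricting $E$ to a slice via $l_0 \mapsto (l_0, l_1)$.
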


\begin{proof}  The function $L_x \times L_x \rightarrow L_x$ given by $(l_0, l_1) \mapsto (l_0)^{-1}*l_1$ is continuous and $E$ is the preimage of $\bigcup H$ under this function, so by assumption we have $E$ is $\Po$.  The proof that $R$ is $\Po$ is similar.  This proves (1).  For (2) we notice that for a fixed $l_0 \in L_x$ the function $L_x \rightarrow L_x$ given by $l \mapsto (l_0)^{-1} * l$ is continuous and the set $[l_0]H$ is the continuous preimage of $\bigcup H$.
\end{proof}

\begin{lemma}\label{openclosed}  If $H\leq \pi_1(X, x)$ is open then $H$ is also closed.
\end{lemma}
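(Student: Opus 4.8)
The plan is to mimic the classical fact that an open subgroup of a topological group is closed: the complement of the subgroup is a union of cosets, each of which is open because it is a translate of the open subgroup. First I would record that $\bigcup H$ is exactly the coset $[c]H$, where $c$ denotes the constant loop at $x$. Since $H$ is a subgroup, $\bigcup H$ is a union of homotopy classes, and $[c]H$ consists of all loops homotopic to $c \ast l' = l'$ with $l' \in \bigcup H$, which is precisely $\bigcup H$.

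Next I would use that the cosets $[l_0]H$ partition $L_x$ into the equivalence classes of the relation $E$ from Lemma \ref{relations}. The key step is that each such coset is open. For this I invoke the description established in the proof of Lemma \ref{relations}: the set $[l_0]H$ is the preimage of $\bigcup H$ under the continuous map $L_x \rightarrow L_x$ given by $l \mapsto (l_0)^{-1} \ast l$. Since $\bigcup H$ is open by hypothesis, and the preimage of an open set under a continuous map is open, every coset $[l_0]H$ is open.

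Finally, the complement $L_x \setminus \bigcup H$ is the union of all cosets $[l_0]H$ with $[l_0] \notin H$, and this is a union of open sets, hence open. Therefore $\bigcup H$ is closed in $L_x$, which is exactly the assertion that $H$ is a closed subgroup. The argument is formal once the coset structure is recognized; the only point deserving care is that concatenation is not literally invertible on $L_x$, so the translation maps are not genuine homeomorphisms, but Lemma \ref{relations} already supplies the continuous preimage description that the openness conclusion requires, and I expect no serious obstacle beyond this observation.
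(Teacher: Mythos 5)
Your proposal is correct and follows essentially the same route as the paper: the paper likewise invokes Lemma \ref{relations} to see that each coset $[l]H$ is open (as a continuous preimage of the open set $\bigcup H$) and then writes $L_x \setminus (\bigcup H) = \bigcup_{l\notin \bigcup H}[l]H$ as a union of open sets. Your additional remarks (identifying $\bigcup H$ with $[c]H$ and noting that translations need not be homeomorphisms) are accurate but not needed beyond what the coset-preimage argument already gives.
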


\begin{proof}  Supposing $H$ is open we have by Lemma \ref{relations} that the set $\bigcup_{l\notin \bigcup H} [l]H$ is a union of open sets in $L_x$, and this is precisely $L_x \setminus (\bigcup H)$.
\end{proof}

We notice that change of basepoint isomorphisms take open (resp. closed) subgroups to open (resp. closed) subgroups, as seen in the following lemma.

\begin{lemma}\label{pointindependence}  Let $x,y\in X$ and $\rho$ a path from $y$ to $x$.  Let $\phi: L_x\rightarrow L_y$ be the map such that $\phi(l) = \rho*l*\rho^{-1}$ and $\psi: L_y\rightarrow L_x$ be given by $\rho^{-1}*l*\rho$.  Then

\begin{enumerate} \item $\phi$ and $\psi$ are isometric embeddings and induce isomorphisms $\overline{\phi}: \pi_1(X, x) \rightarrow \pi_1(X, y)$, and $\overline{\psi}: \pi_1(X, y) \rightarrow \pi_1(X,x)$.

\item $G\leq \pi_1(X, x)$ is open (resp. closed) iff $\overline{\phi}(G)$ is.

\item $G\leq \pi_1(X, x)$ is open (resp. closed) iff every conjugate of $G$ is.

\end{enumerate}
\end{lemma}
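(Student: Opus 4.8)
The plan is to dispatch (1) by a direct computation together with the standard change-of-basepoint facts, to deduce (2) from the continuity of $\phi$ and $\psi$ after identifying the relevant unions as continuous preimages, and to obtain (3) as the case $y=x$ of (2).

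For (1), I would first check that $\phi$ is an isometric embedding (the argument for $\psi$ being identical). Writing $\phi(l)=\rho*(l*\rho^{-1})$ out in coordinates, the loop $\phi(l)$ traverses $\rho$ on an initial subinterval, a reparametrization of $l$ on a middle subinterval, and $\rho^{-1}$ on a final subinterval, with these subintervals independent of $l$. Hence for $l_0,l_1\in L_x$ the images $\phi(l_0)$ and $\phi(l_1)$ literally coincide on the $\rho$- and $\rho^{-1}$-portions (contributing nothing to the supremum distance), while on the middle portion the distance at each parameter is exactly $d(l_0(t),l_1(t))$ for the corresponding $t$; thus $\sup_s d(\phi(l_0)(s),\phi(l_1)(s))=\sup_t d(l_0(t),l_1(t))$ and $\phi$ is isometric. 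That $\overline{\phi}([l])=[\rho*l*\rho^{-1}]$ is a well-defined homomorphism, and that $\overline{\psi}$ is its two-sided inverse, are the usual consequences of the null-homotopies of $\rho^{-1}*\rho$ and $\rho*\rho^{-1}$.

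The heart of the proof is (2), and it rests on the set identity $\bigcup\overline{\phi}(G)=\psi^{-1}(\bigcup G)$. Indeed a loop $m\in L_y$ lies in $\bigcup\overline{\phi}(G)$ iff $[m]\in\overline{\phi}(G)$, and applying the inverse isomorphism this is equivalent to $\overline{\psi}([m])=[\psi(m)]\in G$, i.e. to $\psi(m)\in\bigcup G$. Since $\psi$ is continuous, openness (resp. closedness) of $\bigcup G$ passes to its preimage $\psi^{-1}(\bigcup G)=\bigcup\overline{\phi}(G)$, so $G$ open (resp. closed) implies $\overline{\phi}(G)$ open (resp. closed). The converse is symmetric: injectivity of $\overline{\phi}$ gives $\bigcup G=\phi^{-1}(\bigcup\overline{\phi}(G))$, and continuity of $\phi$ carries openness (resp. closedness) of $\overline{\phi}(G)$ back to $G$. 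For (3) one specializes to $y=x$, where $\rho$ is a loop at $x$ and $\overline{\phi}$ is the inner automorphism $h\mapsto[\rho]h[\rho]^{-1}$, so that $\overline{\phi}(G)=[\rho]G[\rho]^{-1}$; every conjugate of $G$ arises from some representative loop $\rho$, and $G$ is its own conjugate by the identity, so (3) follows at once from (2).

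The main thing to watch is the bookkeeping in (2): the naive guess $\phi(\bigcup G)$ is the wrong set—it is merely the image of $\bigcup G$ inside $L_y$ and carries no openness or closedness information—whereas $\bigcup\overline{\phi}(G)$ is a union of entire homotopy classes. The decisive observation is that this union is a \emph{preimage} under the continuous map $\psi$ (not an image under $\phi$); once that is recognized, continuity does all the remaining work and no separate homotopy-theoretic argument is needed.
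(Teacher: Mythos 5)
Your proposal is correct, and all three parts go through: the isometry computation in (1) is right (the $\rho$- and $\rho^{-1}$-portions of $\phi(l_0)$ and $\phi(l_1)$ coincide, and the middle portion reproduces $d(l_0,l_1)$ exactly), the set identities $\bigcup\overline{\phi}(G)=\psi^{-1}(\bigcup G)$ and $\bigcup G=\phi^{-1}(\bigcup\overline{\phi}(G))$ are valid (the first uses that $\overline{\psi}=\overline{\phi}^{-1}$, the second injectivity of $\overline{\phi}$), and the specialization $y=x$ for (3) matches the paper's own reduction. Where you differ from the paper is in the mechanism for (2): the paper argues by contraposition with sequences, pushing a sequence $l_n\to l$ witnessing non-openness of $G$ forward through $\phi$ to witness non-openness of $\overline{\phi}(G)$ (and running a parallel argument for closedness), whereas you pull open and closed sets back through the continuous map $\psi$ via the preimage identity. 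The two arguments are dual — forward images of sequences under $\phi$ versus preimages of sets under $\psi$ — and rest on the same ingredients (continuity of $\phi,\psi$ and the mutually inverse isomorphisms), but yours has the advantage of treating the open and closed cases in one stroke with no case analysis; it is also the same preimage trick the paper itself deploys in Lemma \ref{relations} and Proposition \ref{funct}(1), so it fits the paper's toolkit naturally. Your closing caution that $\phi(\bigcup G)$ is the wrong set to consider is well taken: the union of homotopy classes $\bigcup\overline{\phi}(G)$ is strictly larger than the image $\phi(\bigcup G)$, and recognizing it as a preimage rather than an image is precisely what makes the continuity argument work.
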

\begin{proof}  The first part of (1) is clear, and the second is a standard exercise.  For (2) suppose $G$ is not open.  Let $l\in L_x$ with $[l]\in G$ and $\{l_n\}_{n\in \omega}$ be a sequence of loops such that $l_n \rightarrow l$ and $[l_n] \notin G$.  Then $\rho*l_n*\rho^{-1} \rightarrow \rho*l*\rho^{-1}$ and $[\rho*l_n*\rho^{-1}] \notin \overline{\phi}(G)$, so $\overline{\phi}(G)$ is not open.  If $\overline{\phi}(G)$ is not open then by the proof for the other direction we have that $\overline{\psi} \overline{\phi}(G) = G$ is not open.

Suppose that $G$ is not closed and let $l\in L_x$ be such that $[l]\notin G$ and there exists a sequence $\{l_n\}_{n\in \omega}$ such that $[l_n] \in G$ and $l_n \rightarrow l$.  Then $\rho*l_n*\rho^{-1} \rightarrow \rho*l*\rho^{-1}$ and $[\rho*l_n*\rho^{-1}]\in \phi(G)$ and $[\rho*l*\rho^{-1}] \notin \phi(G)$.  Again, for the other direction we consider the application of the map $\overline{\psi}$.

The last claim is proved by letting $\rho$ be a loop from $x$ to itself and applying (2).

\end{proof}

By Lemma \ref{pointindependence} we may consider open or closed normal subgroups as base point free.

\begin{lemma}\label{opencoverforopen}  If $G \unlhd \pi_1(X)$ is open there exists an open cover $\mathcal{U}$ of $X$ such that any loop contained entirely in an element of $\mathcal{U}$ is in $\bigcup G$.
\end{lemma}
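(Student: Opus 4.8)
The plan is to build the cover one ball at a time out of the openness of $G$ at each point, and then to reduce any loop lying inside a single cover element to a loop based at the center of that element, where the openness hypothesis applies verbatim. First I would record what openness buys at every base point. By the remark following Lemma \ref{pointindependence}, since $G$ is normal and open we may regard $G$ as an open subgroup of $\pi_1(X,y)$ for every $y\in X$. The identity of $\pi_1(X,y)$ is the class of the constant loop $c_y$, which lies in $\bigcup G$, so openness furnishes a radius $\delta_y>0$ with $B(c_y,\delta_y)\subseteq\bigcup G$ inside $L_y$. Unravelling the sup metric, this says exactly that every loop $l$ based at $y$ whose image lies in the ball $B(y,\delta_y)\subseteq X$ satisfies $[l]\in G$. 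This is the local certificate I would attach to the point $y$.

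With the certificates in hand I would take as candidate cover the collection $\mathcal{U}=\{U_y:y\in X\}$, where $U_y$ is the path component of $y$ in the open ball $B(y,\delta_y/2)$; this is an open cover when $X$ is locally path connected, and in the general case one works with these path components as the covering sets. To verify the covering property, fix $y$ and a loop $l$ whose image lies in $U_y$, based at $w=l(0)$. Because $G$ is normal, Lemma \ref{pointindependence}(3) makes the change-of-basepoint correspondence insensitive to the connecting path, so for \emph{any} path $\rho$ from $y$ to $w$ one has $[l]\in G$ if and only if $[\rho*l*\rho^{-1}]\in G$. Choosing $\rho$ with image inside $U_y\subseteq B(y,\delta_y/2)$, the conjugated loop $\rho*l*\rho^{-1}$ is based at $y$ and has image inside $B(y,\delta_y/2)\subseteq B(y,\delta_y)$, so the certificate at $y$ yields $[\rho*l*\rho^{-1}]\in G$, whence $l\in\bigcup G$.

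The hard part, and the only real content, will be the base-point transport in the last step: the certificate at $y$ controls loops based \emph{at} $y$, whereas a loop contained in a cover element is based at an arbitrary point $w$ of that element. Converting the question back to $y$ by a connecting path $\rho$ only helps if $\rho$ can be kept inside the certificate ball $B(y,\delta_y)$, which is precisely why I would insist that the cover element be path connected and of diameter below $\delta_y$, taking path components of the half-radius balls. Two features make this work and are worth flagging as the crux: normality of $G$ is essential, since by Lemma \ref{pointindependence}(3) it is exactly what renders the equivalence $[l]\in G\iff[\rho*l*\rho^{-1}]\in G$ independent of the choice of $\rho$ (for a non-normal open subgroup the conclusion would depend on $\rho$ and the statement would fail); and the halving of the radius is what guarantees that both $l$ and the connecting path $\rho$ remain within the certificate ball. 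If instead one prefers to avoid transport altogether by arguing at $w$ directly—every point of $l$ lies within $\delta_y$ of $w$—one is forced to compare $\delta_w$ with $\delta_y$, and arranging that uniformity is essentially equivalent to the path-connectedness refinement above.
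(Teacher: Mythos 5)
Your opening paragraph reproduces the paper's entire proof: for each $y\in X$, openness of $G$ as a subgroup of $\pi_1(X,y)$ at the constant loop gives $\delta_y>0$ such that every loop \emph{based at} $y$ with image in $B(y,\delta_y)$ lies in $\bigcup G$, and the paper simply declares $\mathcal{U}=\{B(y,\delta_y)\}_{y\in X}$ to be the desired cover and stops there. The extra step you add is where a genuine gap enters. Your covering sets are path components of the half-radius balls, and in a general metric space --- the lemma carries no local path connectedness hypothesis --- path components of open sets need not be open. So the family you exhibit is not an open cover of $X$, and your fallback remark that ``in the general case one works with these path components as the covering sets'' does not repair this: the statement demands an \emph{open} cover. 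As written, your argument proves the lemma only for locally path connected $X$.

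To be fair, the difficulty you were trying to solve is real, and the paper's two-sentence proof does not engage with it: the certificate at $y$ controls only loops based at $y$, whereas the conclusion quantifies over loops based at arbitrary points of a cover element, and without a path inside the cover element there is no way to transport the basepoint. In fact this is not a removable technicality. Attach circles of shrinking diameter to the sine portion of a Warsaw-circle-type arc at points accumulating on the limit segment; one can check that the trivial subgroup is open (winding around any fixed circle is locally constant on the loop space, and loops reaching ever-deeper circles must traverse unboundedly many oscillations, so they admit no uniformly convergent subsequence), yet every open set containing the accumulation point contains an entire essential circle, so no open cover can control loops based at arbitrary points of its elements. Thus the strong, basepoint-free form of the conclusion you aim at is not provable in the lemma's stated generality; what the paper's proof actually establishes, and what its later applications use (always in locally path connected settings such as Peano continua, where your conjugation argument via normality and Lemma \ref{pointindependence} --- the same mechanism as Lemma \ref{opencover} --- does go through), is the center-based certificate together with the cover by the balls themselves. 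So your transport step is the right supplement in the locally path connected case, but your proof of the lemma as stated fails because your cover is not open, and that failure cannot be patched without adding hypotheses.
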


\begin{proof}  For each point $x\in X$ we have $G\unlhd \pi_1(X, x)$ is open, and the constant loop $c$ at $x$ is in $\bigcup G$, so we may pick $\epsilon_x >0$ such that $B(c, \epsilon_x) \subseteq \bigcup G$.  Selecting the $\epsilon_x$ neighborhood $B(x, \epsilon_x)$ around $x$ gives the desired open cover $\mathcal{U} = \{B(x, \epsilon_x)\}_{x\in X}$.
\end{proof}

The converse to the above lemma is not true in general.  The space $F$ in the next example will reappear in later examples in this paper.

\begin{example}\label{exampleF}  Let $F =  \bigcup_{y \in K} C((0, y),y) \subseteq \mathbb{R}^2$ where $K$ is a homeomorph of the Cantor set that lies in the interval $[1,2]$ and $C(p, r)$ denotes the circle centered at point $p$ of radius $r$.  The space $F$ can be considered a wedge of $2^{\aleph_0}$ many circles of diameter $\geq 1$ whose antipodes from the wedge point $(0,0)$ correspond to the elements of a Cantor set.  This space is compact and the fundamental group is easily seen to be isomorphic to the free group of rank continuum $F(2^{\aleph_0})$ (with a free generating set corresponding to a set of loops that go exactly once around one of the circles $C((0, y),y)$ ).  Let $\Po$ be any pointset defined on metric spaces which is closed under taking continuous preimages.  Define a map $f:K \rightarrow L_{(0,0)}$ by letting $f(y)(t) = (y\sin(2\pi t), y-y\cos(2\pi t))$.  It is clear for $y_0, y_1\in K$ that $d(f(y_0), f(y_1)) = 2d(y_0, y_1)$ since $d(f(y_0)(s), f(y_1)(s))$ is maximized precisely at $s=\frac{1}{2}$ and $d(f(y_0)(\frac{1}{2}), f(y_1)(\frac{1}{2})) = 2d(y_0, y_1)$.  Then $f$ is an embedding of $K$.  The image $f(K)$ gives a set of loops which freely generate the fundamental group.  If $G \leq \pi_1(F, (0,0))$ is of pointclass $\Po$ then $f^{-1}(\bigcup G)$ is as well.

For any $\emptyset \neq S\subseteq K$ we have a subgroup: $$\iota_*(\pi_1( \bigcup_{y \in S} C((0, y),y), (0,0))) \leq \pi_1(F, (0,0))$$ freely generated by the loops in $f(S)$.  The normal closure $$G = \langle\langle \iota_*(\pi_1( \bigcup_{y \in S} C((0, y),y), (0,0)))\rangle\rangle \leq \pi_1(F, (0,0))$$ does not contain any elements of form $[f(y)]$ where $y\in K\setminus S$ since $G$ is the kernel of the retraction map from $\pi_1(F, (0,0))$ to the free subgroup $$\iota_*(\pi_1( \bigcup_{y \in K \setminus S} C((0, y),y), (0,0))) \leq \pi_1(F, (0,0))$$Any loop in $F$ contained in an open ball of radius $\frac{1}{2}$ is nulhomotopic, so there exists an open cover $\mathcal{U}$ satisfying the conclusion of Lemma \ref{opencoverforopen} for any normal subgroup $G\unlhd \pi_1(F, (0,0))$.  Not every subgroup is open, however, by letting $S \subseteq K$ be not open and noticing that $S = f^{-1}(\bigcup\langle\langle \iota_*(\pi_1( \bigcup_{y \in S} C((0, y),y), (0,0)))\rangle\rangle)$ is not open.
\end{example}

 We present a partial converse to Lemma \ref{opencoverforopen}.

\begin{definition}
A topological space $Z$ is \textbf{locally path connected} if for every $z\in Z$ and neighborhood $U$ of $z$ there exists a neighborhood $V \subseteq U$ of $z$ such that $V$ is path connected.
\end{definition}

\begin{lemma}\label{opencover}  Let $X$ be locally path connected and $G \unlhd \pi_1(X)$.  If there exists an open cover $\mathcal{U}$ of $X$ such that any loop contained entirely in an element of $\mathcal{U}$ is in $G$ then $G$ is open.
\end{lemma}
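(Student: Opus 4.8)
The plan is to verify directly that $\bigcup G$ is open in $L_x$: fixing a loop $l$ with $[l]\in G$, I would produce $\epsilon>0$ so that every loop $l'$ with $\sup_{s}d(l(s),l'(s))<\epsilon$ also has $[l']\in G$. Since $G$ is normal I will freely use Lemma~\ref{pointindependence} to regard $G$ as base point free, so that ``$[m]\in G$'' makes sense for a loop $m$ based at any point. The first step is to chop up the domain: the sets $\{l^{-1}(U)\}_{U\in\mathcal{U}}$ form an open cover of the compact interval $[0,1]$, so a Lebesgue number argument yields a partition $0=t_0<t_1<\cdots<t_n=1$ together with elements $U_i\in\mathcal{U}$ such that $l([t_{i-1},t_i])\subseteq U_i$. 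Write $a_i=l|_{[t_{i-1},t_i]}$.

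Next I would calibrate $\epsilon$ to two requirements. First, each $l([t_{i-1},t_i])$ is a compact subset of the open set $U_i$, so some uniform neighborhood of it still lies in $U_i$; this forces the corresponding segment $b_i=l'|_{[t_{i-1},t_i]}$ of a nearby loop to remain inside $U_i$. Second, and this is where local path connectedness enters, for each interior node $t_i$ the point $l(t_i)$ lies in the open overlap $U_i\cap U_{i+1}$, so I may choose a path connected open $V_i$ with $l(t_i)\in V_i\subseteq U_i\cap U_{i+1}$ and shrink $\epsilon$ so that $l'(t_i)\in V_i$. Path connectedness of $V_i$ then supplies a connecting path $\gamma_i$ from $l(t_i)$ to $l'(t_i)$ lying in $U_i\cap U_{i+1}$; at the endpoints I set $\gamma_0$ and $\gamma_n$ equal to the constant loop at $x$, which is legitimate because $l$ and $l'$ share the base point.

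With these pieces in hand I would form the ``quadrilateral'' loops $\ell_i=a_i*\gamma_i*b_i^{-1}*\gamma_{i-1}^{-1}$, each based at $l(t_{i-1})$. By construction every one of $a_i,b_i,\gamma_{i-1},\gamma_i$ lies in $U_i$, so $\ell_i$ is a loop contained in a single element of $\mathcal{U}$ and hence $[\ell_i]\in G$ by hypothesis. The endgame is a telescoping computation in the fundamental groupoid. Setting $p_i=a_1*\cdots*a_i$ (so $p_n\simeq l$) and solving the groupoid relation for $b_i$ gives $b_i\simeq \gamma_{i-1}^{-1}*\ell_i^{-1}*a_i*\gamma_i$; substituting into $l'\simeq b_1*\cdots*b_n$, conjugating each $\ell_i$ back to $x$ by $p_{i-1}$ (allowed since $G$ is normal) to obtain classes $[\tilde{\ell}_i]\in G$, and telescoping the cancelling $p_{i-1}*a_i*p_i^{-1}$ factors, I arrive at $[l']=[\tilde{\ell}_1]^{-1}\cdots[\tilde{\ell}_n]^{-1}[l]$. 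As $[l]\in G$ and $G$ is a subgroup, this yields $[l']\in G$, as desired.

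I expect the main obstacle to be the simultaneous placement of the connecting paths $\gamma_i$ inside the overlaps $U_i\cap U_{i+1}$: this is precisely the step that can fail without local path connectedness (compare the subgroup of Example~\ref{exampleF}, which admits such a cover yet is not open), and it must be coordinated with the uniform neighborhood estimate so that a single $\epsilon$ works for all segments at once. A secondary bookkeeping point is to keep the base points straight when conjugating the $\ell_i$ to $x$, so that the groupoid identity collapses to the clean product above; normality of $G$ is exactly what guarantees each conjugated class lands back in $G$.
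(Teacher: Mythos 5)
Your proposal is correct and follows essentially the same route as the paper's own proof: subdivide $l$ so that each segment lies in a single element of $\mathcal{U}$, use local path connectedness to insert connecting paths from $l$ to a nearby loop $l'$ at the subdivision points, observe that the resulting quadrilateral loops each lie in one element of $\mathcal{U}$ and hence represent elements of $G$, and use normality of $G$ to conjugate these back to the basepoint. The only differences are cosmetic: you place the connecting paths in path connected neighborhoods inside the overlaps $U_i \cap U_{i+1}$ rather than in small path connected sets obtained from a second Lebesgue-number refinement as the paper does, and you write out explicitly the telescoping identity $[l'] = [\tilde{\ell}_1]^{-1}\cdots[\tilde{\ell}_n]^{-1}[l]$ that the paper leaves implicit in the assertion that $l^{-1}\ast l'$ lies in $\bigcup G$.
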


\begin{proof}  Assume the hypotheses and fix $x\in X$.  Let $l\in \bigcup G \subseteq L_x$. Cover the image of $l$ with a finite subcollection $\{U_0, \ldots, U_{m}\} \subseteq \mathcal{U}$, so that the images of each inclusion $\iota_*:\pi_1(U_i) \rightarrow \pi_1(X)$ are in $G$.  Let $\delta>0$ be a Lebesgue number for the covering of the image of $l$ by $\{U_0, \ldots, U_{m}\}$.  Cover $l$ with finitely many open balls $\{B_0, \ldots, B_{k}\}$ of radius $\frac{\delta}{2}$.  Cover the image of $l$ with finitely many path connected open sets $\{V_0, \ldots, V_q\}$, each of which is contained in one of the $\{B_0, \ldots, B_{k}\}$.  Let $\epsilon$ be a Lebesgue number for the covering $\{V_0, \ldots, V_q\}$ of the image of $l$.  Pick $N\in \omega$ sufficiently large so that for $0 \leq n \leq N-1$ we have that $l([\frac{n}{N}, \frac{n+1}{N}])$ is contained inside some $V_{j_n}$.  Now assuming $l'\in L_x$ is less than distance $\epsilon$ from $l$ we have that $d(l'(s), l(s)) < \epsilon$ for all $s\in [0,1]$.  For each $1\leq n \leq N-1$ let $p_n$ be a path in $V_{j_n}$ from $l(\frac{n}{N})$ to $l'(\frac{n}{N})$ and let $p_0$ and $p_N$ be the constant path at $x$.  Notice that the loop $l|[\frac{n}{N}, \frac{n+1}{N}] *p_{n+1} * (l'|[\frac{n}{N}, \frac{n+1}{N}])^{-1}* p_n^{-1}$ is contained in one of the $U_i$, and so is a representative of an element of $G$ based potentially at a different point.  Then $l^{-1}\ast l'$ is an element of $\bigcup G$, so $l' \in \bigcup G$. Thus $G$ is open.
\end{proof}

For the next proposition we recall the following definition.

\begin{definition}
A topological space $Z$ is \textbf{semi-locally simply connected} if for every $z\in Z$ there exists a neighborhood $U$ of $z$ such that the map induced by inclusion $\iota_*: \pi_1(U, z) \rightarrow \pi_1(Z, z)$ is the trivial map.  For a locally path connected space we may obviously select $U$ to be path connected.
\end{definition}

\begin{proposition}\label{slsc}  Let $X$ be locally path connected in addition to being metrizable.  The following are equivalent:

\begin{enumerate}\item The trivial subgroup of $\pi_1(X)$ is open.

\item All subgroups of $\pi_1(X, x)$ are open.

\item $X$ is semi-locally simply connected.

\end{enumerate}

\end{proposition}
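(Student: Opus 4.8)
The plan is to prove the cycle of equivalences using the open-cover characterizations of open normal subgroups established above, with the equivalence of (1) and (2) being essentially formal and the real content lying in relating (1) and (3).

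First I would dispatch $(1) \Leftrightarrow (2)$. Since the trivial subgroup is contained in every subgroup $H \leq \pi_1(X, x)$, Lemma \ref{supergroup} gives immediately that if the trivial subgroup is open then every such $H$ is open. The reverse implication is trivial, as the trivial subgroup is itself a subgroup. Thus these two conditions are interchangeable, and I may concentrate on $(1) \Leftrightarrow (3)$.

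For $(1) \Rightarrow (3)$, I would note that the trivial subgroup is normal, so its openness lets me invoke Lemma \ref{opencoverforopen}: there is an open cover $\mathcal{U}$ of $X$ such that every loop lying entirely inside some member of $\mathcal{U}$ is nullhomotopic. Given $z \in X$, choose $U \in \mathcal{U}$ containing $z$, and use local path connectedness to find a path connected neighborhood $V \subseteq U$ of $z$. Every loop at $z$ in $V$ lies in $U$, hence is nullhomotopic in $X$, so $\iota_*: \pi_1(V, z) \to \pi_1(X, z)$ is trivial; this is exactly semi-local simple connectedness at $z$.

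For $(3) \Rightarrow (1)$ I would run Lemma \ref{opencover} with $G$ the trivial (normal) subgroup. Using semi-local simple connectedness together with local path connectedness, for each $z$ I pick a path connected neighborhood $U_z$ with $\iota_*: \pi_1(U_z, z) \to \pi_1(X, z)$ trivial, and take $\mathcal{U} = \{U_z\}_{z \in X}$. The hypothesis of Lemma \ref{opencover} requires that every loop lying entirely in some $U_z$ be nullhomotopic, regardless of its basepoint, and this basepoint shift is the main obstacle: a priori I only know that loops based at $z$ in $U_z$ are nullhomotopic. I would resolve it by exploiting the path connectedness of $U_z$: given a loop $l$ in $U_z$ based at some point $w$, choose a path $\gamma$ in $U_z$ from $z$ to $w$; then $\gamma * l * \gamma^{-1}$ is a loop at $z$ in $U_z$, hence nullhomotopic in $X$, and therefore $l$ is nullhomotopic in $X$ as well. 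With the hypothesis thus verified, Lemma \ref{opencover} yields that the trivial subgroup is open, closing the cycle.
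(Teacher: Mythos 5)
Your proof is correct, and it leans on the same two pillars as the paper's: Lemma \ref{supergroup} for the interchangeability of (1) and (2), and Lemma \ref{opencover} for getting from (3) back to openness of the trivial subgroup. The one place you genuinely diverge is the implication toward (3): the paper proves $(2) \Rightarrow (3)$ directly by a metric argument in the loop space (openness of the trivial subgroup gives an $\epsilon$ with $B(c,\epsilon) \subseteq \bigcup [c]$, and any loop based at $x$ with image in $B(x,\epsilon)$ lies in that ball, hence is nulhomotopic), whereas you prove $(1) \Rightarrow (3)$ by citing Lemma \ref{opencoverforopen} as a black box. Since the proof of Lemma \ref{opencoverforopen} is exactly that $\epsilon$-ball argument, the underlying mathematics is the same, but your routing is more modular and avoids repeating the metric estimate; note only that invoking that lemma for the trivial subgroup implicitly uses its normality together with Lemma \ref{pointindependence} to treat it as basepoint-free, which you should say explicitly. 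A point in your favor: in $(3) \Rightarrow (1)$ you address the basepoint-shift issue head on (conjugating a loop based at $w \in U_z$ by a path in $U_z$ back to $z$), a step the paper's one-line proof of that implication silently absorbs into the basepoint-free statement of Lemma \ref{opencover}.
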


\begin{proof}  The implication (1) $\Rightarrow$ (2) follows from Lemma \ref{supergroup}.  For (2) $\Rightarrow$ (3) we let $x\in X$ be given along with a neighborhood $U$ of $x$.  Since in particular the trivial subgroup of $\pi_1(X, x)$ is open and the constant map $c:[0,1] \rightarrow \{x\}$ is trivial, we may select $\epsilon>0$ such that $B(c, \epsilon) \subseteq \bigcup[c] \subseteq L_x$, where without loss of generality $B(x, \epsilon) \subseteq U$.  Now any loop with image in $B(x, \epsilon)$ must be in $B(c, \epsilon)$ and therefore nulhomotopic in $X$.

For (3) $\Rightarrow$ (1) we let $\mathcal{U}$ be an open cover of $X$ by path connected open sets $U$ whose inclusion maps induce a trivial map $\pi_1(U) \rightarrow \pi_1(X)$.  Then we are in the situation of Lemma \ref{opencover} and we see that the trivial subgroup is open, so we are done.
\end{proof}

\end{section}

\begin{section}{Polish Spaces}\label{Polishspaces}

We present some material which will be specific to dealing with fundamental groups of Polish spaces.  We give some technical lemmas which will establish closure properties for subgroups of particularly nice types of pointclasses (stated in Theorem \ref{closureprop}).  These will give a sense of the versatility of such subgroups.  We will then prove a couple of the main results of the paper.  Recall the following:

\begin{definition}  A topological space $Z$ is \textbf{Polish} if it is completely metrizable and separable.
\end{definition}

Many commonly used spaces such as the real line $\mathbb{R}$, compact metric spaces, and countable discrete spaces are Polish.  Polish spaces are closed under countable disjoint union and countable products.  When $X$ is path connected and Polish the space $L_x$ is also Polish.  The space $H_x$ of homotopies of loops at $x$, topologized by the $\sup$ metric, is Polish assuming $X$ is path connected Polish.  The following lemma provides a sense of base point independence as in Lemma \ref{pointindependence}.

\begin{lemma}\label{pathiso}  Suppose the pointclass $\Po$ contains the closed sets and is closed under continuous images between Polish spaces, finite products, and finite intersections. Let $X$ be Polish and $\rho$ be a path from $x$ to $y$ in $X$.  Letting $\phi$ be the map defined in Lemma \ref{pointindependence}, a subgroup $G\leq \pi_1(X, x)$ is of type $\Po$ if and only if $\phi(G)$ is.
\end{lemma}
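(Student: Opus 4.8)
The plan is to show that the map $\phi : L_x \to L_y$ given by $\phi(l) = \rho * l * \rho^{-1}$ is a continuous bijection between $\bigcup G$ and $\bigcup \phi(G)$ with continuous inverse (at the level of loop spaces, modulo the subtlety that $\phi$ sends loops at $x$ to loops at $y$ but its literal inverse is $\psi(l) = \rho^{-1} * l * \rho$), and then to transport the pointclass membership of $\bigcup G$ to $\bigcup \phi(G)$ and back using the stated closure properties. First I would record that by Lemma \ref{pointindependence} the map $\phi$ is an isometric embedding of $L_x$ into $L_y$, hence continuous, and likewise $\psi$ is an isometric embedding of $L_y$ into $L_x$. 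The key point is that $\overline{\phi}(G) = \phi(G)$ as subgroups, so I must relate the set $\bigcup \overline{\phi}(G)$ of loops based at $y$ to the image $\phi(\bigcup G)$, keeping careful track of the difference between the subgroup $\phi(G) \le \pi_1(X,y)$ and the literal image set $\phi(\bigcup G) \subseteq L_y$.

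The main step is the forward direction: assuming $\bigcup G$ is of type $\Po$, I want $\bigcup \phi(G)$ to be of type $\Po$. Here I would use that $\phi$ is a continuous map between Polish spaces (both $L_x$ and $L_y$ are Polish since $X$ is path connected Polish, as noted just before the lemma) and that $\Po$ is closed under continuous images between Polish spaces. The set $\phi(\bigcup G)$ is the continuous image of $\bigcup G$, hence of type $\Po$. The remaining task is to argue that $\bigcup \phi(G)$ and $\phi(\bigcup G)$ are related by operations $\Po$ tolerates: a loop $m$ at $y$ represents an element of $\phi(G)$ precisely when $\psi(m) = \rho^{-1} * m * \rho$ represents an element of $G$, i.e. when $\psi(m) \in \bigcup G$ up to homotopy. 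I expect it is cleanest to characterize $\bigcup \phi(G)$ directly as $\psi^{-1}(\bigcup G)$: a loop $m \in L_y$ lies in $\bigcup\phi(G)$ iff $[m] = \overline\phi([l])$ for some $[l]\in G$ iff $\overline\psi([m]) = [l] \in G$ iff $\psi(m) \in \bigcup G$. Since $\psi$ is continuous and $\Po$ (containing the closed sets) is closed under continuous preimages — which follows from the stated closure properties, or can be invoked as the hypothesis used throughout, e.g. in Lemma \ref{relations} — we get $\bigcup\phi(G) = \psi^{-1}(\bigcup G)$ is of type $\Po$.

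The converse direction is symmetric: applying the same argument with the roles of $x,y$ and of $\phi,\psi$ interchanged, and using $\overline\psi\,\overline\phi = \mathrm{id}$, if $\bigcup\phi(G)$ is of type $\Po$ then $\bigcup G = \psi(\bigcup\phi(G)) = \phi^{-1}(\bigcup\phi(G))$ is of type $\Po$. The hard part will be the bookkeeping around two distinct descriptions of the transported set — the image $\phi(\bigcup G)$ versus the preimage $\psi^{-1}(\bigcup G)$ — and verifying they coincide with $\bigcup \phi(G)$; once that identity is pinned down, the pointclass closure hypotheses do all the remaining work, and the hypothesis that $\Po$ contains the closed sets is what guarantees continuous preimages stay in $\Po$ (since preimages under continuous maps of $\Po$ sets can be handled via graphs, products, and projections in the presence of closed sets, exactly as the closure assumptions are designed to provide). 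I would therefore present the preimage characterization $\bigcup\phi(G) = \psi^{-1}(\bigcup G)$ as the crux, deduce it from the change-of-basepoint isomorphism of Lemma \ref{pointindependence}, and close both implications by the closure properties of $\Po$.
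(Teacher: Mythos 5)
Your proof is correct, but it takes a genuinely different route from the paper's. The paper proves the forward direction by taking the continuous image $\psi(\bigcup G)$ under the isometric embedding and then explicitly saturating it under homotopy: it introduces the closed set $D = \{(l_0,H,l_1) : H \text{ homotopes } l_0 \text{ to } l_1\}$ inside $L_y \times H_y \times L_y$, intersects $D$ with $(\psi(\bigcup G)) \times H_y \times L_y$, and projects to the third coordinate, so that closure under closed sets, finite products, finite intersections, and continuous images is invoked exactly once each. You instead observe that no saturation step is needed at all: since $\bigcup G$ is by definition a union of homotopy classes, the identity $\bigcup \phi(G) = \psi^{-1}(\bigcup G)$ holds on the nose, reducing everything to closure under continuous preimages --- which is not a stated hypothesis, but which you correctly note follows from the stated ones via the closed-graph trick, writing $\psi^{-1}(A) = p_1\bigl(\mathrm{Gr}(\psi) \cap (L_y \times A)\bigr)$. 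It is worth noticing that your graph trick is structurally the same intersect-a-closed-set-and-project maneuver as the paper's, only applied to $\mathrm{Gr}(\psi)$ rather than to the homotopy relation $D$; what your version buys is that it avoids the homotopy space $H_y$ entirely and isolates preimage-closure as a reusable consequence of the hypotheses, while the paper's version is the one that generalizes to Lemmas \ref{loopgeneration} and \ref{loopgenerationsubgroup}, where the set being saturated is \emph{not} already homotopy-saturated and the explicit use of $D$ is unavoidable. One small blemish: in your converse direction the first equality in ``$\bigcup G = \psi(\bigcup\phi(G)) = \phi^{-1}(\bigcup\phi(G))$'' is false as stated, since the literal image $\psi(\bigcup\phi(G))$ contains only loops of the concatenated form $\rho^{-1}\ast m\ast\rho$ and is a proper subset of $\bigcup G$ in general; but the second expression $\phi^{-1}(\bigcup\phi(G))$ is the correct characterization, it is the one your argument actually uses, and the symmetry claim goes through with it.
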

\begin{proof}  Assume the hypotheses.  We prove the forward direction of the biconditional and the other direction follows similarly.  Let $G\leq \pi_1(X, x)$ be of type $\Po$.  Let $D \subseteq L_x\times H_x \times L_x$ be defined by $D = \{(l_0, H, l_1): H$ is a homotopy from $l_0$ to $l_1\}$.  It is easy to see that $D$ is closed.  Since the map $l \mapsto \rho^{-1}*l*\rho$ is an isometric embedding from $L_x$ to $L_y$ we have that $\rho^{-1}*G*\rho$ is in pointclass $\Po$ in $L_y$ by assumption.  Then $(\rho^{-1}*G*\rho) \times H_y \times L_y$ is in pointclass $\Po$ in $L_y \times H_y \times L_y$ by hypothesis.  Then $D \cap (\rho^{-1}*G*\rho) \times H_y \times L_y$ is in pointclass $\Po$.  Letting $p_3:L_x\times H_x \times L_x\rightarrow L_x$ be projection to the third coordinate (obviously a continuous map), we have that $\bigcup\phi(G) = p_3(D \cap (\rho^{-1}*G*\rho) \times H_y \times L_y)$ is in the pointclass $\Po$.
\end{proof}

For $K\subseteq L_x$ let $[K] \subseteq \pi_1(X, x)$ denote the subset of equivalence classes of loops which have representatives in $K$.

\begin{lemma}\label{loopgeneration}  Let $\Po$ and $X$ satisfy the hypotheses of Lemma \ref{pathiso}.  If $K\subseteq L_x$ is $\Po$ then the set $\bigcup[K] \subseteq L_x$ is $\Po$.
\end{lemma}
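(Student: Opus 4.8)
The plan is to adapt the strategy from the proof of Lemma \ref{pathiso}, realizing $\bigcup[K]$ as a continuous image of a $\Po$ set sitting inside a product of Polish spaces. The key observation is that $\bigcup[K]$ is precisely the homotopy-saturation of $K$: it consists of those loops $l_1\in L_x$ for which there exists some $l_0\in K$ together with a homotopy carrying $l_0$ to $l_1$. This existential-over-homotopies description is exactly what a projection after intersecting with the graph of the homotopy relation will produce.

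Concretely, I would reuse the set $D\subseteq L_x\times H_x\times L_x$ from the proof of Lemma \ref{pathiso}, namely $D=\{(l_0,H,l_1):H\text{ is a homotopy from }l_0\text{ to }l_1\}$, which was already observed there to be closed. Since $X$ is path connected Polish, both $L_x$ and $H_x$ are Polish, so the triple product $L_x\times H_x\times L_x$ is Polish; moreover each factor, being the whole space, is closed and hence in $\Po$. As $K$ is $\Po$ by hypothesis, the product $K\times H_x\times L_x$ is $\Po$ by closure under finite products, and $D$ is $\Po$ because it is closed and $\Po$ contains the closed sets. Therefore $D\cap(K\times H_x\times L_x)$ is $\Po$ by closure under finite intersections. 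Letting $p_3:L_x\times H_x\times L_x\rightarrow L_x$ be the continuous projection onto the third coordinate, closure of $\Po$ under continuous images between Polish spaces will then give that $p_3(D\cap(K\times H_x\times L_x))$ is $\Po$.

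The one point requiring genuine care is verifying the identity $p_3(D\cap(K\times H_x\times L_x))=\bigcup[K]$: a loop $l_1$ lies in the image exactly when there is some $l_0\in K$ and a homotopy from $l_0$ to $l_1$, i.e.\ exactly when $l_1$ is homotopic to a representative in $K$, which is the definition of membership in $\bigcup[K]$. Beyond unwinding this definition the argument is routine given the machinery already established, so I do not anticipate a substantial obstacle.
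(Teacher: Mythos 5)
Your proposal is correct and is essentially identical to the paper's own proof: the paper also forms the closed set $D$ of homotopy triples, intersects it with $K\times H_x\times L_x$, and projects to the third coordinate, using exactly the closure properties of $\Po$ you invoke. The identity $p_3\bigl(D\cap(K\times H_x\times L_x)\bigr)=\bigcup[K]$ that you verify is the same one the paper relies on.
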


\begin{proof}  Letting $D = \{(l_0, H, l_1): H$ homotopes $l_0$ to $l_1\}\subseteq L_x\times H_x\times L_x$ we have that $D$ is closed and therefore $\Po$.  The set $K$ is $\Po$ and therefore so is $K \times H_x\times L_x$.  Then $(K \times H_x\times L_x)\cap D$  is $\Po$, and letting $p_3$ be projection in the third coordinate we have $p_3((K \times H_x\times L_x)\cap D) = \bigcup [K]$ is $\Po$.
\end{proof}

\begin{lemma}\label{loopgenerationsubgroup}  Let $\Po$ and $X$ satisfy the hypotheses of Lemma \ref{pathiso}.  Assume further that $\Po$ is closed under countable unions.  If $K \subseteq L_x$ is $\Po$ then $\langle[K]\rangle$ is a $\Po$ subgroup of $\pi_1(X, x)$.
\end{lemma}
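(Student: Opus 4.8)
The plan is to build $\bigcup\langle[K]\rangle$ out of $K$ by two operations, each of which preserves membership in $\Po$: first form all \emph{literal} concatenations of loops drawn from $K$, their inverses, and the constant loop, and second close this set up under homotopy using Lemma \ref{loopgeneration}. The point to keep in mind throughout is that $L_x$ (and hence each finite power $L_x^n$) is Polish because $X$ is, so that the hypothesis ``closed under continuous images between Polish spaces'' genuinely applies to the maps below.

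First I would assemble a convenient $\Po$ generating set. The inversion map $L_x\to L_x$, $l\mapsto l^{-1}$, is an isometry, so $K^{-1}$ is a continuous image of $K$ and is therefore $\Po$; the constant loop $c$ at $x$ is a closed singleton and hence $\Po$; and since $\Po$ is closed under countable (so finite) unions, the set $K'=K\cup K^{-1}\cup\{c\}$ is $\Po$. Next, for each $n\geq 1$ consider the $n$-fold concatenation map $\mu_n:L_x^n\to L_x$, $\mu_n(l_1,\ldots,l_n)=l_1*l_2*\cdots*l_n$; a direct computation with the sup metric shows $\mu_n$ is (Lipschitz, hence) continuous. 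Because $\Po$ is closed under finite products, $(K')^n$ is $\Po$ in $L_x^n$, and because $\Po$ is closed under continuous images between Polish spaces, $P_n:=\mu_n\big((K')^n\big)$ is $\Po$. Finally $P:=\bigcup_{n\geq 1}P_n$ is $\Po$ by closure under countable unions.

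It then remains to pass from literal concatenations to the actual subgroup, and this is the one step where the distinction between a loop and its homotopy class matters: the set $P$ consists of honest concatenations of generators and need not coincide with $\bigcup\langle[K]\rangle$, which is closed under homotopy. This is exactly what Lemma \ref{loopgeneration} repairs. Applying it to the $\Po$ set $P$ shows that $\bigcup[P]$ is $\Po$. I would then verify the set equality $\bigcup[P]=\bigcup\langle[K]\rangle$: every element of $\langle[K]\rangle$ is a product $[k_1]^{\pm 1}\cdots[k_m]^{\pm 1}$ (with the identity covered by $c\in K'$), and such a product is represented by the loop $k_1^{\pm 1}*\cdots*k_m^{\pm 1}\in P_m\subseteq P$, so $\langle[K]\rangle\subseteq[P]$; conversely each loop in $P$ is a concatenation of elements of $K\cup K^{-1}\cup\{c\}$ and hence represents an element of $\langle[K]\rangle$, giving $[P]\subseteq\langle[K]\rangle$. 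Thus $[P]=\langle[K]\rangle$ and consequently $\bigcup\langle[K]\rangle=\bigcup[P]$ is $\Po$.

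The anticipated main obstacle is not any single computation but precisely the bookkeeping of this last identification: one must be careful that an arbitrary bracketing of $*$ yields a well-defined continuous $\mu_n$ and that passing to homotopy classes does not shrink or enlarge the generated subgroup. Once Lemma \ref{loopgeneration} is invoked to absorb the homotopy relation, associativity and bracketing conventions become irrelevant, and the verification reduces to the elementary statement that the subgroup generated by $[K]$ is the set of classes of finite words in $K^{\pm 1}$. Everything else is an application of the stipulated closure properties of $\Po$.
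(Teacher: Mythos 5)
Your proposal is correct and follows essentially the same route as the paper: close $K$ under inversion (the paper uses $K\cup K^{-1}$, relying on the $n=0$ term of its union for the identity where you add the constant loop explicitly), push finite products of this set through the continuous concatenation maps, take the countable union, and then invoke Lemma \ref{loopgeneration} to saturate under homotopy, identifying the result with $\bigcup\langle[K]\rangle$. The only differences are cosmetic bookkeeping choices, so there is nothing to add.
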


\begin{proof}  Notice that the inversion map $l \mapsto l^{-1}$ is an isometry and therefore continuous.  Thus $K^{-1}$ is $\Po$, and $K \cup K^{-1}$ is also $\Po$.  For each $n\in \omega$ let $m_n: \prod_{i=0}^{n-1} L_x \rightarrow L_x$ be given by $(l_0, \ldots, l_{n-1}) \mapsto l_0*l_1*\cdots *l_{n-1}$.  This is clearly a continuous map.  Each $m_n(\prod_{i=0}^{n-1} (K\cup K^{-1}) )$ is of type $\Po$.  Thus $\bigcup_{n=0}^{\infty}m_n(\prod_{i=0}^{n-1} (K\cup K^{-1}) )$ is $\Po$.  By Lemma \ref{loopgeneration} we have that $\bigcup[\bigcup_{n=0}^{\infty}m_n(\prod_{i=0}^{n-1} (K\cup K^{-1}) )]$is $\Po$.  We are done since $\bigcup \langle[K]\rangle = \bigcup[\bigcup_{n=0}^{\infty}m_n(\prod_{i=0}^{n-1} (K\cup K^{-1}) )]$.
\end{proof}

\begin{lemma}\label{normalclosure} Let $\Po$ and $X$ satisfy the hypotheses of Lemma \ref{loopgenerationsubgroup}.  If $K \subseteq L_x$ is $\Po$ then the normal closure $\langle\langle [K]\rangle\rangle$ is $\Po$.
\end{lemma}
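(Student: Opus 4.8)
The plan is to reduce the normal closure to a subgroup of the form $\langle [K']\rangle$ for a suitably enlarged $\Po$ set $K'$, and then invoke Lemma \ref{loopgenerationsubgroup} directly. The key observation is that the normal closure $\langle\langle [K]\rangle\rangle$ is exactly the subgroup generated by all conjugates of elements of $[K]$; so if I can produce a single $\Po$ set $K'\subseteq L_x$ whose image $[K']$ consists of all such conjugates, I am done. To build $K'$, first note that conjugation by a fixed class $[g]$ on loops is realized by the concatenation map $l\mapsto g*l*g^{-1}$, which is an isometry (hence continuous) on $L_x$. The idea is to run all conjugating loops $g$ simultaneously: I would consider the map $m\colon L_x\times L_x\to L_x$ given by $(g,l)\mapsto g*l*g^{-1}$, which is continuous, and set $K'=m(L_x\times K)$. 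Since $L_x\times K$ is $\Po$ in $L_x\times L_x$ (as $\Po$ contains the whole space $L_x$, being closed—indeed all of $L_x$ is both open and closed in itself—and is closed under finite products), its continuous image $K'$ is $\Po$ by the closure of $\Po$ under continuous images between Polish spaces. Here I use that $L_x$, $L_x\times L_x$, and their relevant subspaces are Polish because $X$ is Polish.

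The main step is then to verify that $\langle [K']\rangle=\langle\langle [K]\rangle\rangle$ as subgroups of $\pi_1(X,x)$. For the inclusion $\langle [K']\rangle\subseteq\langle\langle [K]\rangle\rangle$, every element $[g*l*g^{-1}]=[g][l][g]^{-1}$ with $l\in K$ is a conjugate of a generator of $[K]$, hence lies in the normal closure, and the normal closure is a subgroup, so it contains everything generated by $[K']$. For the reverse inclusion, $\langle\langle [K]\rangle\rangle$ is generated by all conjugates $[h][k][h]^{-1}$ with $[k]\in[K]$ and $[h]\in\pi_1(X,x)$ arbitrary; choosing a representative $h\in L_x$ of $[h]$ and a representative $k\in K$ of $[k]$, we have $[h][k][h]^{-1}=[h*k*h^{-1}]\in[K']$, so each such conjugate already lies in $[K']$ and therefore in $\langle[K']\rangle$. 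Thus the two subgroups coincide.

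Having established $\langle\langle [K]\rangle\rangle=\langle [K']\rangle$ with $K'$ a $\Po$ subset of $L_x$, I can apply Lemma \ref{loopgenerationsubgroup} to $K'$ (its hypotheses are exactly those assumed here, namely the conditions of Lemma \ref{pathiso} together with closure under countable unions) to conclude that $\langle [K']\rangle$ is a $\Po$ subgroup, which is what we wanted.

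I expect the only genuinely delicate point to be the bookkeeping that guarantees $K'$ really captures \emph{all} conjugating classes: one must be careful that ranging $g$ over all of $L_x$ (rather than over a chosen set of representatives) produces the full conjugacy orbit, but this is immediate since every class in $\pi_1(X,x)$ has a representative in $L_x$. The remaining verifications—continuity of the concatenation-conjugation map and the product and image closure of $\Po$—are routine given the hypotheses and the fact that $X$ is Polish.
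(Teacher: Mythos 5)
Your proposal is correct and is essentially identical to the paper's proof: the paper uses the same continuous map $c(l_0,l_1)=l_0*l_1*l_0^{-1}$, forms the same $\Po$ set $c(L_x\times K)$, and concludes via Lemma \ref{loopgenerationsubgroup} that $\langle\langle[K]\rangle\rangle=\langle[c(L_x\times K)]\rangle$ is $\Po$. The only difference is that you spell out the verification that the generated subgroup equals the normal closure, which the paper leaves implicit.
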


\begin{proof}  Let $c: L_x \times L_x \rightarrow L_x$ be given by $(l_0, l_1) \mapsto l_0*l_1*l_0^{-1}$.  This is easily continuous.  We have $L_x \times K$ is $\Po$, and so is $c(L_x \times K)$.  Then $\langle \langle [K]\rangle\rangle = \langle [c(L_x \times K)] \rangle$ is $\Po$ by Lemma \ref{loopgenerationsubgroup} .
\end{proof}

The preceeding lemmas motivate the following:

\begin{definition}  A pointclass $\Po$ defined on Polish spaces is \textbf{nice} if it contains the closed sets, is closed under continuous images and preimages, countable intersections and finite unions.
\end{definition}

\begin{remark}  A nice pointclass is also closed under countable products, for if $A_n \subseteq Z_n$ is of nice pointclass $\Po$ for each $n\in \omega$ then $\prod_{n\in \omega} A_n = \bigcap_{n\in \omega}p_n^{-1}(A_n)$ is $\Po$ in the Polish space $\prod_{n\in \omega} Z_n$.  A nice pointclass is also closed under countable unions, for suppose $A_n \subseteq Z$ are $\Po$ for each $n\in \omega$.  If $\bigcup_{n\in \omega} A_n = \emptyset$ then as $\emptyset$ is closed we have $\bigcup_{n\in \omega} A_n$ is $\Po$.  Otherwise pick $z\in\bigcup_{n\in \omega} A_n$.  Since $\{z\}$ is closed in $Z$ and $\Po$ is closed under finite unions we can assume $z\in A_n$ for all $n\in \omega$.   Let $\bigsqcup_{n\in \omega}Z$ be the disjoint union of countably many copies of $Z$.  Let $f:\bigsqcup_{n\in \omega} Z \rightarrow \prod_{n\in \omega}Z$ take $y_n$ to $(z, z, \ldots, z, y, z, z\ldots)$ (here $y$ is in the $n$th coordinate) where $y_n$ is a copy of $y$ in the $n$th copy of $Z$ in the disjoint union.  The map $f$ is continuous by the universal and co-universal properties of product and disjoint unions, respectively.  The set $\prod_{n\in \omega} A_n$ is $\Po$ as we have seen.  Letting $g:\bigsqcup_{n\in \omega} Z \rightarrow Z$ map each copy of $Z$ via identity we get that $g(f^{-1}(\prod_{n\in \omega} A_n)) = \bigcup_{n\in \omega} A_n$ is $\Po$.

\end{remark}

Under set inclusion the smallest nice Polish pointclass is that of the analytic sets (denoted $\Sigma_1^1$).  If $Z$ is Polish we say $Y\subseteq Z$ is \textbf{analytic} if there exists a Polish space $W$ and a continuous map $f: W\rightarrow Z$ such that $f(W) = Y$.  All Borel sets of a Polish space are analytic (see \cite{Ke}).

\begin{lemma}\label{productstructure}  If $X = \prod_{n\in\omega} X_n$ where each $X_n$ is metrizable, then the loop space of $X$ is homeomorphic to the product of the loop spaces of the spaces $X_n$ and can be metrized thereby.
\end{lemma}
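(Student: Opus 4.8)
The plan is to exhibit an explicit bijection between $L_x$ and $\prod_{n\in\omega} L_{x_n}$ (where $x=(x_n)_{n\in\omega}$ and $L_{x_n}$ denotes the loop space of $X_n$ at $x_n$) and to verify it is a homeomorphism, with $L_x$ carrying the sup metric and the product carrying the product topology. First I would fix, for each $n$, a metric $d_n$ on $X_n$ bounded by $1$ (replacing any compatible metric by $\min(d_n,1)$, which changes neither the topology nor the resulting convergence structure), and metrize $X$ by $d((a_n)_n,(b_n)_n)=\sum_{n\in\omega}2^{-n}d_n(a_n,b_n)$, which induces the product topology. Writing $p_n:X\to X_n$ for projection, a function $l:[0,1]\to X$ is continuous if and only if each $p_n\circ l$ is continuous, by the universal property of the product topology; moreover $l$ is a loop based at $x$ exactly when each $p_n\circ l$ is a loop based at $x_n$. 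Hence $\Phi:L_x\to\prod_{n\in\omega}L_{x_n}$ defined by $\Phi(l)=(p_n\circ l)_{n\in\omega}$ is a bijection, with inverse $\Psi$ sending $(l_n)_{n\in\omega}$ to the loop $s\mapsto(l_n(s))_{n\in\omega}$.

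It then remains to check that $\Phi$ and $\Psi$ are continuous. Since each $L_{x_n}$ is metrized by its sup metric $D_n(l_n,l'_n)=\sup_{s}d_n(l_n(s),l'_n(s))\le 1$, the product $\prod_{n\in\omega}L_{x_n}$ is metrized by $\rho=\sum_{n\in\omega}2^{-n}D_n$, and convergence in $\rho$ is precisely coordinatewise uniform convergence. Comparing with the sup metric $D$ on $L_x$, one direction is immediate from $\sup_s\sum_n\le\sum_n\sup_s$: we obtain $D(l,l')\le\rho(\Phi(l),\Phi(l'))$, so $\Psi$ is $1$-Lipschitz, hence continuous. For the reverse direction I would use the per-coordinate estimate $D_n(p_n\circ l,p_n\circ l')=\sup_s d_n\le 2^n\,\sup_s\sum_m 2^{-m}d_m=2^nD(l,l')$ (the middle $d_m$ evaluated at the same $s$); splitting $\rho$ at an index $N$ and bounding the tail by $\sum_{n\ge N}2^{-n}\le 2^{-N+1}$ (using $d_n\le 1$) yields $\rho(\Phi(l),\Phi(l'))\le N\,D(l,l')+2^{-N+1}$, from which an $\varepsilon$–$\delta$ argument gives continuity of $\Phi$. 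Thus $\Phi$ is a homeomorphism.

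Finally, since a countable product of metrizable spaces is metrizable, $\prod_{n\in\omega}L_{x_n}$ is metrizable and $\Phi$ transports this structure to $L_x$; this is the sense in which the loop space \emph{can be metrized thereby}. The only genuinely technical point is the interchange of the supremum over $s$ with the countable sum over coordinates in passing between $D$ and $\rho$: one inequality is free, but the other forces the truncation argument above, since a naive coordinatewise bound would sum to a divergent series. Everything else is formal once the bijection is in place, and the statement is in any case subsumed by the general fact that $C([0,1],\prod_n Y_n)$ with the compact–open (equivalently, sup-metric) topology is naturally homeomorphic to $\prod_n C([0,1],Y_n)$, restricted here to based loops.
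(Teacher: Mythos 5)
Your proof is correct and takes essentially the same route as the paper: both rescale the coordinate metrics so the sum metric induces the product topology (you via weights $2^{-n}$ with $d_n\le 1$, the paper via cutoff metrics with $\diam(X_n)\le 2^{-n}$), identify $L_x$ with $\prod_{n}L_{x_n}$ through the coordinate projections, and show the sup metric and the summed product metric agree topologically. Your explicit Lipschitz-plus-tail-truncation estimates simply flesh out the step the paper states in one line, namely that uniform convergence in $L_x$ holds precisely when each coordinate converges uniformly.
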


\begin{proof}  By applying a cutoff metric $d_n$ to each space $X_n$ we may assume $\diam(X_n) \leq 2^{-n}$.  The metric $d(\{s_n\}_{n\in\omega}, \{t_n\}_{n\in\omega}) = \sum_{n=0}^{\infty} d_n(s_n, t_n)$ is compatible with the product topology on $\prod_n X_n$.  Fix a point $x_n$ in each $X_n$ and let $x = \{x_n\}_{n\in \omega} \in \prod_{n} X_n$.  The metric $d$ induces the $\sup$ metric on the loop space $L_x$ so that $L_x$ is homeomorphic with the space $\prod_{n}L_{x_n}$ where the distance between loops $\{l_n\}_{n\in \omega}$ and $\{l_n'\}_{\omega}$ is $ \sum_n \sup_{s\in [0,1]} d_n(l_n(s), l_n'(s))$.  This follows from the fact that uniform convergence of a sequence of loops in $L_x$ occurs precisely when the loops in each coordinate converge uniformly.  Thus we may metrize $L_x$ with the metric defined by the metric on the product $\prod_{n}L_{x_n}$.  

\end{proof}

We cover some functoriality properties.  Recall that if $(X, x)$ and $(Y, y)$ are two pointed spaces and $f:(X, x) \rightarrow (Y, y)$ is a continuous function there is an induced homomorphism $f_* :\pi_1(X, x) \rightarrow \pi_1(Y, y)$ defined by $f_*([l]) = [f\circ l]$.  The map $f$ also induces a continuous map $\overline{f}: L_x \rightarrow L_y$ given by $l\mapsto f\circ l$.  We also recall that the wedge $(X, x) \vee (Y, y)$ is the topological space obtained by identifying the distinguished points, which has distinguished point corresponding to the identified points which we denote $x\vee y$.  There are obvious inclusion maps from the spaces $(X, x)$ and $(Y, y)$ to the wedge as well as retraction maps from the wedge to the two spaces.  If $X$ and $Y$ are metrizable, competely metrizable, or separable then so is the wedge.

\begin{proposition}\label{funct}  Assume $X$ and $Y$ are metric spaces.  The following closure properties hold:

\begin{enumerate}

\item If $f:(X, x) \rightarrow (Y, y)$ is continuous, $\Po$ is a pointclass closed under continuous preimages and $G \leq \pi_1(X, x)$ is $\Po$, then $(f_*)^{-1}(G)$ is also $\Po$. 

\item If $G_0\leq \pi_1(X, x)$ and $G_1 \leq \pi_1(Y, y)$ are both of pointclass $\Po$ and $\Po$ is closed under products, then $G_0\times G_1 \leq \pi_1(X \times Y, (x, y)) \simeq \pi_1(X, x) \times \pi_1(Y,y)$ is $\Po$.

\item If $f:(X, x) \rightarrow (Y, y)$ is continuous between Polish spaces and $\Po$ is nice and $G\leq \pi_1(X, x)$ is $\Po$ then $f_*(G)$ is $\Po$.

\item If $G_0 \leq \pi_1(X, x)$ and $G_0 \leq \pi_1(Y, y)$ are $\Po$, with $X$ and $Y$ Polish and $\Po$ nice, then the subgroup generated by the images of $G_0$ and $G_1$ under the inclusion maps is $\Po$ in $\pi_1((X, x) \vee (Y, y), x\vee y)$.
\end{enumerate}

\end{proposition}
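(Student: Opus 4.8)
The plan is to reduce each statement to an identity at the level of loop spaces and then invoke either the relevant closure hypothesis on $\Po$ or one of the generation lemmas already established. For (1), I would observe that a loop $l\in L_x$ satisfies $[l]\in (f_*)^{-1}(G)$ precisely when $[f\circ l]=f_*([l])\in G$, i.e. when $\overline{f}(l)\in \bigcup G$. Hence $\bigcup (f_*)^{-1}(G)=\overline{f}^{-1}(\bigcup G)$, and since $\overline{f}:L_x\to L_y$ is continuous and $\Po$ is closed under continuous preimages, this set is $\Po$. For (2), using Lemma \ref{productstructure} to metrize $L_{(x,y)}$ as the product $L_x\times L_y$ (a loop in $X\times Y$ being identified with the pair of its coordinate loops), together with the isomorphism $\pi_1(X\times Y,(x,y))\simeq \pi_1(X,x)\times\pi_1(Y,y)$, a loop lies in $\bigcup(G_0\times G_1)$ iff its two coordinate loops represent elements of $G_0$ and $G_1$ respectively. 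Thus $\bigcup(G_0\times G_1)=(\bigcup G_0)\times(\bigcup G_1)$, which is $\Po$ because $\Po$ is closed under products.

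The main point is (3), where the delicate feature is that $f_*(G)$ is not merely the continuous image $\overline{f}(\bigcup G)$ but its saturation under homotopy; this is why we now require $\Po$ to be nice (hence closed under continuous \emph{images} between Polish spaces, which in turn forces $L_x,L_y$ to be Polish, i.e. $X,Y$ to be Polish) rather than only closed under preimages as in (1). First I would note that $\overline{f}(\bigcup G)$ is the continuous image of the $\Po$ set $\bigcup G$ under $\overline{f}:L_x\to L_y$, hence is itself $\Po$. The set of homotopy classes possessing a representative in $\overline{f}(\bigcup G)$ is exactly $\{[f\circ l]:l\in\bigcup G\}=f_*(G)$, so in the notation preceding Lemma \ref{loopgeneration} we have $\bigcup f_*(G)=\bigcup[\overline{f}(\bigcup G)]$. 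Since a nice pointclass satisfies the hypotheses of Lemma \ref{loopgeneration}, that lemma yields that $\bigcup[\overline{f}(\bigcup G)]$ is $\Po$, completing (3).

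For (4) I would apply (3) to the two inclusion maps $i_X:(X,x)\to(W,w)$ and $i_Y:(Y,y)\to(W,w)$, where $W=(X,x)\vee(Y,y)$ and $w=x\vee y$; since the wedge of Polish spaces is Polish, (3) shows that $(i_X)_*(G_0)$ and $(i_Y)_*(G_1)$ are $\Po$. Setting $K=\bigcup (i_X)_*(G_0)\cup\bigcup (i_Y)_*(G_1)\subseteq L_w$, closure of $\Po$ under finite unions makes $K$ a $\Po$ subset, and one checks directly that $[K]=(i_X)_*(G_0)\cup (i_Y)_*(G_1)$, so that $\langle[K]\rangle$ is precisely the subgroup generated by the two images. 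Lemma \ref{loopgenerationsubgroup}, whose hypotheses a nice pointclass meets, then gives that $\langle[K]\rangle$ is $\Po$. The only real obstacle is the image step underlying (3) and (4): passing from $G$ to $f_*(G)$ leaves the class of merely preimage-closed pointclasses, and it is the combination of niceness with the closed homotopy relation $D$ exploited in Lemma \ref{loopgeneration} that repairs this.
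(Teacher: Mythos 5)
Your proof is correct and follows essentially the same route as the paper's: part (1) via the preimage identity $\bigcup (f_*)^{-1}(G) = \overline{f}^{-1}(\bigcup G)$, part (2) via Lemma \ref{productstructure}, part (3) by taking the continuous image $\overline{f}(\bigcup G)$ and saturating under homotopy, and part (4) by generating from the union of the images and invoking Lemma \ref{loopgenerationsubgroup}. The only difference is that you explicitly cite Lemma \ref{loopgeneration} for the saturation step in (3), which the paper leaves implicit in the equation $\bigcup f_*(G) = [\overline{f}(\bigcup G)]$; this is a welcome clarification rather than a departure.
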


\begin{proof}  For part (1) we notice that $\bigcup (f_*)^{-1}(G) = \overline{f}^{-1}(\bigcup G)$.  Claim (2) follows from Lemma \ref{productstructure}, and applies to countable products if $\Po$ is closed under countable products.  For (3) the map $f$ induces the continuous map $\overline{f}$ from $L_x$ to $L_y$ by composition.  The image of $\bigcup G$ under this map is $\Po$ because $\Po$ is nice, and $\bigcup f_*(G) = [\overline{f}(\bigcup G)]$.  Claim (4)  follows immediately, since $\bigcup\langle\iota_{X*}(G_0) \cup \iota_{Y*}(G_1)\rangle = \bigcup\langle[\iota_X(\bigcup G_0)\cup \iota_Y(\bigcup G_1)]\rangle$ is evidently $\Po$.
\end{proof}

The following theorem gives a catalogue of closure properties for nice subgroups.  Recall that the derived series is defined by letting $G^{(0)} = G$, $G^{(\alpha+1)} = [G^{(\alpha)}, G^{(\alpha)}]$ and $G^{(\beta)} = \bigcap_{\alpha<\beta}G^{(\alpha)}$ if $\beta$ is a limit ordinal.  The lower central series is defined by letting $G_0 = G$ and $G_{n+1} = [G, G_n]$.

\begin{theorem} \label{closureprop} Let $f:(X, x) \rightarrow (Y, y)$ be a continuous function between Polish spaces and let $\Po$ be a nice pointclass.  The following hold:

\begin{enumerate}

\item If $H\leq  \pi_1(Y, y)$ is $\Po$ then $f_*^{-1}(H)\leq \pi_1(X, x)$ is $\Po$.

\item If $G\leq \pi_1(X, x)$ is $\Po$ then $f_*(G)\leq \pi_1(Y, y)$ is $\Po$.

\item The subgroups $1$ and $\pi_1(X, x)$ are analytic in $\pi_1(X, x)$.

\item If $G_n \leq \pi_1(X, x)$ are $\Po$ then so are $\displaystyle\bigcap_{n\in \omega} G_n$ and $\displaystyle\langle \bigcup_{n\in \omega} G_n \rangle$.

\item Countable subgroups of $\pi_1(X, x)$ are analytic.

\item If $G\leq \pi_1(X, x)$ is $\Po$ then so is $\langle \langle G \rangle\rangle$.

\item If $G\leq \pi_1(X,x)$ is $\Po$ then so is any conjugate of $G$.

\item If $w(x_0, \ldots, x_k)$ is a reduced word in the free group $F(x_0, \ldots, x_k)$ and the groups $G_0, \ldots, G_k \leq \pi_1(X, x)$ are $\Po$ then so is the subgroup $\langle \{w(g_0, g_1, \ldots, g_k)\}_{g_i \in G_i}\rangle$.

\item If $G, H \leq \pi_1(X, x)$ are $\Po$ then so is the subgroup $[G, H]$.

\item If $G\leq \pi_1(X, x)$ is $\Po$ then each countable index term of the derived series $G^{(\alpha)}$ and each term of the lower central series $G_n$ is $\Po$.
\end{enumerate}
\end{theorem}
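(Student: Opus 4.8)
The plan is to reduce every item to the generation lemmas already established—Lemmas \ref{loopgeneration}, \ref{loopgenerationsubgroup}, and \ref{normalclosure}—together with the functoriality of Proposition \ref{funct}. Items (1) and (2) are precisely Proposition \ref{funct}(1) and (3), since a nice pointclass is in particular closed under continuous preimages. For (3), observe that $\bigcup 1$ is the set of nullhomotopic loops $\bigcup[\{c\}]$, where $c$ is the constant loop; as $\{c\}$ is closed, Lemma \ref{loopgeneration} applied with $\Po = \Sigma_1^1$ shows this set is analytic, while $\bigcup\pi_1(X,x) = L_x$ is closed and hence analytic. For (5) I would choose a representative loop $l_i$ for each element of a countable subgroup and note that $\bigcup G = \bigcup_i \bigcup[\{l_i\}]$ is a countable union of sets each analytic by Lemma \ref{loopgeneration}, hence analytic.

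For (4), the first claim follows from the identity $\bigcup(\bigcap_n G_n) = \bigcap_n \bigcup G_n$ and closure of $\Po$ under countable intersections; the second from setting $K = \bigcup_n \bigcup G_n$ (which is $\Po$ because nice pointclasses are closed under countable unions) and applying Lemma \ref{loopgenerationsubgroup}, since $\langle[K]\rangle = \langle\bigcup_n G_n\rangle$. Item (6) is immediate from Lemma \ref{normalclosure} with $K = \bigcup G$. For (7), fixing a loop $h$ representing the conjugating class, the map $c_h\colon L_x \to L_x$, $l \mapsto h*l*h^{-1}$, is continuous, so $c_h(\bigcup G)$ is $\Po$, and Lemma \ref{loopgeneration} then gives that its homotopy saturation $\bigcup([h]G[h]^{-1})$ is $\Po$.

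The genuinely new construction is (8), and I expect the real work to lie there. Writing the reduced word as $w = x_{i_1}^{\epsilon_1}\cdots x_{i_m}^{\epsilon_m}$, I would define a continuous map $\Phi\colon\prod_{j=0}^{k}\bigcup G_j \rightarrow L_x$ sending a tuple $(l^{(0)},\ldots,l^{(k)})$ of representatives—one loop per variable, so that repeated occurrences of a variable are fed a common loop—to the concatenation $(l^{(i_1)})^{\epsilon_1}*\cdots*(l^{(i_m)})^{\epsilon_m}$, where the exponent $-1$ denotes the continuous inversion operation. The domain is $\Po$ as a finite product, so the image $K = \Phi(\prod_{j}\bigcup G_j)$ is $\Po$; by construction $[K] = \{w(g_0,\ldots,g_k): g_i\in G_i\}$, and Lemma \ref{loopgenerationsubgroup} then yields that $\langle[K]\rangle$ is $\Po$. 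The care needed is purely bookkeeping: indexing the domain by the variables rather than by the letters of $w$ (so that all occurrences of a given variable receive the same loop) and inserting inversions exactly where the exponents demand.

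Finally, (9) is the special case of (8) with $k=1$ and $w(x_0,x_1) = x_0 x_1 x_0^{-1} x_1^{-1}$. Item (10) then follows by induction: the lower central series satisfies $G_{n+1} = [G, G_n]$, so each term is $\Po$ by repeated application of (9); for the derived series one invokes (9) at successor stages, where $G^{(\alpha+1)} = [G^{(\alpha)}, G^{(\alpha)}]$, and (4) at limit stages, where $G^{(\beta)} = \bigcap_{\alpha<\beta} G^{(\alpha)}$. The restriction to countable index is exactly what makes the limit step a \emph{countable} intersection, which is all that a nice pointclass is guaranteed to respect.
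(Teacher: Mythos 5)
Your proof is correct and follows essentially the same route as the paper's: items (1)--(2) via Proposition \ref{funct}, items (3)--(6) and (8) via the generation machinery of Lemmas \ref{loopgeneration}, \ref{loopgenerationsubgroup}, and \ref{normalclosure} (with the continuous word map doing the work in (8)), and transfinite iteration of (9) with countable intersections at limit stages for (10). The only cosmetic deviations are in (5) and (7), where you replace the paper's appeals to claim (4) and to Lemma \ref{pathiso} with equally valid direct arguments (a countable union of homotopy saturations, and conjugation by a fixed loop followed by Lemma \ref{loopgeneration}) built from the same toolkit.
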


\begin{proof} Claim (1) follows from (1) in Proposition \ref{funct}.  Claim (2) is claim (3) in Proposition \ref{funct}.  For (3) we have that $\pi_1(X, x)$ is a closed subgroup and $1$ is the subgroup generated by the constant map to $x$, and so is analytic by Lemma \ref{loopgenerationsubgroup} (since a singleton is closed in $L_x$).  Claim (4) follows from the definition of nice pointclasses and Lemma \ref{loopgenerationsubgroup}.  Claim (5) follows from the fact that singletons are closed in $L_x$ and claim (4).  Claim (6) is an instance of Lemma \ref{normalclosure}.  Claim (7) is an instance of Lemma \ref{pathiso}.  For claim (8) we notice that the map $w: \prod_{i=0}^k L_x \rightarrow L_x$ given by $(l_0, \ldots, l_k) \mapsto w(l_0, \ldots, l_k)$ is continuous, and so $\{w(l_0, \ldots, l_k)\}_{l_i \in \bigcup G_i}$ is a $\Po$ subset in $L_x$ and the claim follows from Lemma \ref{loopgenerationsubgroup}.  Claim (9) is an instance of claim (8).  For claim (10) we iterate claim (9), applying claim (4) at limit ordinals.
\end{proof}

We recall some definitions.  If $Z$ is a topological space we say that $Y\subseteq Z$ is \textbf{nowhere dense} if the closure $\overline{Y} \subseteq Z$ has empty interior, $Y$ is \textbf{meager} if it is a union of countably many nowhere dense sets in $Z$, $Y$ has the \textbf{property of Baire} (abbreviated BP) if there exists an open set $O \subseteq Z$ such that $Y\Delta O = (Y\setminus O) \cup (O\setminus Y)$ is meager, and $Y$ is \textbf{comeager} if $Z\setminus Y$ is meager.  We say a pointclass $\Po$ on Polish spaces has the property of Baire if each set in $\Po$ has the property of Baire.  For example, the pointclass of open sets obviously has BP.  In fact, the class of analytic sets also has BP (see \cite{Ke}).

The following was proven in \cite{P} using a result from \cite{My}.

\begin{lemma*}\label{Pawlikowski}  Suppose $\approx$ is an equivalence relation on the Cantor set $\{0,1\}^{\omega}$ such that if $\alpha$ and $\beta$ differ at exactly one coordinate then $\alpha\approx \beta$ fails.  If $\approx$ has BP as a subset of $\{0,1\}^{\omega} \times \{0,1\}^{\omega}$, then $\approx$ has $2^{\aleph_0}$ equivalence classes.
\end{lemma*}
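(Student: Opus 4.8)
The plan is to reduce the statement to two facts: that the hypotheses force $\approx$ to be \emph{meager} as a subset of $\{0,1\}^{\omega}\times\{0,1\}^{\omega}$, and that a meager equivalence relation on a perfect Polish space admits a perfect set of pairwise inequivalent points. The second fact is precisely the consequence of Mycielski's theorem (this is the ``result from \cite{My}'' being invoked): since $\{0,1\}^{\omega}$ is a perfect Polish space and $\approx$ is meager, there is a perfect set $P\subseteq\{0,1\}^{\omega}$ such that for distinct $\alpha,\beta\in P$ we have $(\alpha,\beta)\notin\approx$. A perfect subset of $\{0,1\}^{\omega}$ has cardinality $2^{\aleph_0}$ and its points lie in pairwise distinct $\approx$-classes, so $\approx$ has $2^{\aleph_0}$ classes. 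This last step is routine once meagerness is in hand, so the entire burden of the proof is the meagerness of $\approx$.

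To prove that $\approx$ is meager I would exploit the interaction between the property of Baire, the topological group structure of $\{0,1\}^{\omega}$ under coordinatewise addition mod $2$, and transitivity. For $n\in\omega$ let $\sigma_n\colon\{0,1\}^{\omega}\to\{0,1\}^{\omega}$ be the homeomorphism that flips the $n$-th coordinate; thus $\alpha$ and $\sigma_n(\alpha)$ differ at exactly one coordinate, and by hypothesis $\alpha\approx\sigma_n(\alpha)$ always fails. Suppose toward a contradiction that $\approx$ is non-meager. Since $\approx$ has BP, it is comeager in some nonempty basic clopen box $B=N_s\times N_t$, where $N_s,N_t$ are the cylinders of sequences extending the finite strings $s,t$. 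Choose $n\geq\max(|s|,|t|)$, so that $\sigma_n$ maps each of $N_s$ and $N_t$ onto itself. Then $\mathrm{id}\times\sigma_n$ is a self-homeomorphism of $B$, hence preserves the meager ideal relative to $B$; consequently $(\mathrm{id}\times\sigma_n)(\approx)$ is again comeager in $B$. The intersection of the two comeager-in-$B$ sets $\approx\cap B$ and $(\mathrm{id}\times\sigma_n)(\approx)\cap B$ is comeager in the nonempty Polish space $B$, and therefore nonempty. A point $(\alpha,\gamma)$ in this intersection satisfies $\alpha\approx\gamma$ and $(\alpha,\gamma)\in(\mathrm{id}\times\sigma_n)(\approx)$, the latter meaning $\alpha\approx\sigma_n(\gamma)$. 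Transitivity then yields $\gamma\approx\sigma_n(\gamma)$, contradicting the hypothesis since $\gamma$ and $\sigma_n(\gamma)$ differ at exactly the one coordinate $n$. Hence $\approx$ is meager.

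The main obstacle, and the only genuinely delicate point, is this meagerness step, which must package together three ingredients correctly: the reduction of ``non-meager with BP'' to ``comeager in a basic box,'' the observation that a coordinate flip beyond the length of the defining strings fixes the relevant cylinders and hence acts as a category-preserving homeomorphism of the box, and the use of transitivity of $\approx$ to convert a single instance of $\alpha\approx\gamma\,\wedge\,\alpha\approx\sigma_n(\gamma)$ into the forbidden relation $\gamma\approx\sigma_n(\gamma)$. Once $\approx$ is known to be meager, the passage to $2^{\aleph_0}$ classes via Mycielski's perfect-set theorem is standard, so I would present it briefly.
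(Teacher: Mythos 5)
Your proof is correct, and it takes essentially the route the paper itself relies on: the paper gives no proof of this lemma but cites Pawlikowski \cite{P}, whose argument likewise reduces the statement to meagerness of $\approx$ and then invokes Mycielski's perfect independent-set theorem \cite{My}. Your meagerness step (localizing a non-meager BP set to be comeager in a basic box $N_s\times N_t$, flipping a coordinate $n\geq\max(|s|,|t|)$ to get a second comeager set, and using symmetry plus transitivity to produce the forbidden pair $\gamma\approx\sigma_n(\gamma)$) is sound, as is the concluding appeal to a perfect set of pairwise inequivalent points.
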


\begin{lemma}\label{uncountable}  Let $X$ be Polish.  Suppose that $G \unlhd K  \leq \pi_1(X, x)$ with $G$ of pointclass $\Po$ and that $K$ is closed.  Suppose also that $\Po$ has BP and is closed under continuous preimages in Polish spaces, and that there exist arbitrarily small loops at $x$ which are in $\bigcup K$ and not in $\bigcup G$.  Then $\card(K/G) = 2^{\aleph_0}$.
\end{lemma}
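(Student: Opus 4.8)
The plan is to apply Pawlikowski's lemma to a suitable equivalence relation on the Cantor set obtained by pulling back the coset relation of $G$ in $K$ along a carefully built map $\Phi\colon\{0,1\}^{\omega}\to L_x$. First I would use the hypothesis to fix, for each $n\in\omega$, a loop $l_n\in\bigcup K\setminus\bigcup G$ whose image lies in $B(x,2^{-n})$, together with a sequence of subintervals $I_n\subseteq[0,1]$ with disjoint interiors accumulating only at $1$. For $\alpha\in\{0,1\}^{\omega}$ I define $\Phi(\alpha)$ to be the loop which on $I_n$ is a reparametrized copy of $l_n$ when $\alpha(n)=1$ and is constant at $x$ otherwise (and equals $x$ at the accumulation point). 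Since each $l_n$ is a loop at $x$, the value at every interval endpoint is $x$, so $\Phi(\alpha)$ is a genuine concatenation of loops based at $x$; continuity at the accumulation point follows from $\diam(l_n)\to 0$. A routine estimate shows that if $\alpha,\beta$ agree on their first $N$ coordinates then $\Phi(\alpha)$ and $\Phi(\beta)$ agree off a set whose image has diameter $O(2^{-N})$, so $\Phi$ is continuous.

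Next I would check that $\Phi(\alpha)\in\bigcup K$ for every $\alpha$. The truncations $\Phi_N(\alpha)$ (using only the first $N$ intervals and constant afterward) are finite concatenations of the $l_n$, hence represent elements of $K$, and $\Phi_N(\alpha)\to\Phi(\alpha)$ in $L_x$; since $K$ is closed, $[\Phi(\alpha)]\in K$. I then define $\alpha\approx\beta$ iff $[\Phi(\alpha)]G=[\Phi(\beta)]G$. By Lemma \ref{relations}, applied with $H=G$ and using that $\Po$ is closed under continuous preimages, the left-coset relation $E$ on $L_x\times L_x$ is $\Po$; consequently $\approx=(\Phi\times\Phi)^{-1}(E)$ is the preimage of $E$ under a continuous map between Polish spaces, hence $\Po$, hence has BP. It is an equivalence relation because it is the pullback of the partition into left cosets.

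The heart of the argument is verifying the remaining hypothesis of Pawlikowski's lemma: if $\alpha$ and $\beta$ differ in exactly one coordinate $m$, say $\alpha(m)=0$ and $\beta(m)=1$, then $\alpha\not\approx\beta$. Writing $H$ for the common head over $I_0,\dots,I_{m-1}$ and $T$ for the common tail over $I_{m+1},I_{m+2},\dots$, the split $\Phi(\alpha)=H*T$ and $\Phi(\beta)=H*l_m*T$ (up to homotopy, absorbing the constant piece on $I_m$) gives $[\Phi(\alpha)]^{-1}[\Phi(\beta)]=[T]^{-1}[l_m][T]$, a conjugate of $[l_m]$ by $[T]^{-1}$. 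Here $[T]=[H]^{-1}[\Phi(\alpha)]\in K$, since $[H]\in K$ as a finite product of the $[l_n]$ and $[\Phi(\alpha)]\in K$ by the previous paragraph. This is exactly the step forcing the use of normality: because $G\unlhd K$, the conjugate $[T]^{-1}[l_m][T]$ lies in $G$ if and only if $[l_m]$ does, and $[l_m]\notin\bigcup G$ by choice; hence $[\Phi(\alpha)]G\neq[\Phi(\beta)]G$. I expect this conjugation bookkeeping to be the main obstacle, as it is the only place where the precise interaction of the concatenation order with the coset structure must be controlled, and where normality is indispensable.

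Finally, Pawlikowski's lemma yields $2^{\aleph_0}$ classes of $\approx$. The assignment $\alpha\mapsto[\Phi(\alpha)]G$ has fibers precisely the $\approx$-classes, so it descends to an injection of those classes into $K/G$, giving $\card(K/G)\geq 2^{\aleph_0}$; and since $L_x$ is separable metrizable we have $\card(K/G)\leq\card(\pi_1(X,x))\leq\card(L_x)\leq 2^{\aleph_0}$, whence equality.
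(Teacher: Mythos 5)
Your proposal is correct and follows essentially the same route as the paper's proof: an infinite concatenation of the small loops $l_n$ indexed by $\{0,1\}^{\omega}$, membership in $\bigcup K$ via closedness of $K$, the pullback of the left-coset relation (via Lemma \ref{relations} and closure under continuous preimages) to get a BP equivalence relation, normality of $G$ in $K$ to rule out equivalence of sequences differing in one coordinate, and Pawlikowski's lemma plus the trivial upper bound $\card(L_x)\leq 2^{\aleph_0}$. Your derivation of $[T]\in K$ from $[T]=[H]^{-1}[\Phi(\alpha)]$ is a slightly tidier bookkeeping of the same conjugation step the paper performs.
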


\begin{proof}  Assume the hypotheses and let $\{l_n\}_{n\in \omega}$ be a sequence of loops at $x$ in $\bigcup (K \setminus G)$ such that the diameter of $l_n$ is $\leq 2^{-n}$.  Let $l_n^{0}$ be the constant loop at $x$ and let $l_n^1$ be the loop $l_n$.  Given an element $\alpha \in \{0, 1\}^{\omega}$ we define $l^{\alpha}$ to be the loop $l_0^{\alpha(0)}*(l_1^{\alpha(1)}*(l_2^{\alpha(2)}*(\cdots )))$ (which must also be in $\bigcup K$ as $K$ is closed).  In other words, $l^{\alpha}$ restricted to the interval $[0,\frac{1}{2}]$ is either the constant loop or $l_0$ in case $\alpha(0)$ is $0$ or $1$ respectively, $l^{\alpha}$ restricted to the interval $[\frac{1}{2}, \frac{3}{4}]$  is either the constant loop or $l_1$ in case $\alpha(1)$ is $0$ or $1$ respectively, etc.  The function from the Cantor set $\{0, 1\}^{\omega}$ to $L_x$ given by $\alpha \mapsto l^{\alpha}$ is clearly continuous.  For $l, l' \in L_x$ letting $l \sim l'$ if and only if $[l]G = [l']G$, we have by Lemma \ref{relations} that $\sim \subseteq L_x\times L_x$ is of pointclass $\Po$.  Defining an equivalence relation $\approx$ on $\{0,1\}^{\omega}$ so that $\alpha \approx \beta$ if and only if $l^{\alpha} \sim l^{\beta}$, we see that $\approx \subseteq \{0, 1\}^{\omega} \times \{0,1\}^{\omega}$ is of pointclass $\Po$ as a continuous preimage.  As $\Po$ has BP we know that $\approx$ has BP.  By Lemma \ref{Pawlikowski} we shall be done if we show that if $\alpha$ and $\beta$ differ at exactly one point then $\alpha \approx \beta$ fails.  Suppose that $\alpha(n) \neq \beta(n)$ and that $\alpha(m) = \beta(m)$ whenever $m\neq n$ and that $l^{\alpha}\approx l^{\beta}$.  Letting without loss of generality $\alpha(n) = 1$ and $\beta(n) = 0$ we see that $[(l^{\beta})^{-1}*l^{\alpha}] \in G$.  Let $h = l_{n+1}^{\alpha(n+1)}*(l_{n+2}^{\alpha(n+2)}*(\cdots ))$ and $g = l_0^{\alpha(0)}*(l_1^{\alpha(1)}*(\cdots l_{n-1}^{\alpha(n-1)})\cdots)$.  Then $[(l^{\beta})^{-1}*l^{\alpha}] = [h^{-1}*g^{-1}*g*l_n*h] = [h^{-1}* l_n * h]\in G$, so by normality of $G$ in $K$ we have $[l_n] \in G$, a contradiction.  Thus there are at least $2^{\aleph_0}$ many elements in $K/G$ by the above lemma, and there are at most $2^{\aleph_0}$ elements because there are at most $2^{\aleph_0}$ loops at $x$.
\end{proof}

\begin{lemma}\label{openorsequence} If $X$ is metric, locally path connected and $G \unlhd \pi_1(X)$ then either $G$ is open or there exists $y\in X$ and a sequence of loops $\{l_n\}_{n\in \omega}$ at $y$ with $\diam(l_n) \searrow 0$ and $[l_n]\notin G$.
\end{lemma}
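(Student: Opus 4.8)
The plan is to prove the dichotomy by its contrapositive, exploiting the open-cover characterization of openness for normal subgroups (Lemma \ref{opencover}) together with the basepoint-free interpretation of a normal subgroup afforded by Lemma \ref{pointindependence}. Call a point $y\in X$ \emph{good} if there exists $\epsilon_y>0$ such that every loop contained entirely in $B(y,\epsilon_y)$ represents an element of $G$ (this is unambiguous since $G$ is normal). If every point of $X$ is good, then $\mathcal{U}=\{B(y,\epsilon_y)\}_{y\in X}$ is an open cover meeting the hypothesis of Lemma \ref{opencover}, whence $G$ is open. Thus, assuming $G$ is not open, I first extract a \emph{bad} point $y$: a point for which, for every $\epsilon>0$, there is a loop contained in $B(y,\epsilon)$ whose class is not in $G$.

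Next I would upgrade these small loops, which a priori are based at arbitrary points of $B(y,\epsilon)$, into loops based at the single point $y$. Fixing $n\in\omega$, local path connectedness lets me choose a path-connected open neighborhood $V_n\subseteq B(y,2^{-n})$ of $y$. The key claim is that $V_n$ contains a loop based at $y$ whose class is not in $G$. Suppose instead that every loop based at $y$ and contained in $V_n$ represents an element of $G$. Given any loop $c$ contained in $V_n$ but based at some $w\in V_n$, choose (by path connectedness of $V_n$) a path $p$ in $V_n$ from $w$ to $y$; then $p*c*p^{-1}$ is a loop based at $y$ contained in $V_n$, so its class lies in $G$, and by normality of $G$ the class of $c$ lies in $G$ as well. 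Hence every loop contained in $V_n$ would represent an element of $G$, and since $V_n$ is a neighborhood of $y$ it contains some $B(y,\epsilon')$, contradicting badness of $y$.

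Therefore for each $n$ there is a loop $l_n$ based at $y$, contained in $V_n\subseteq B(y,2^{-n})$, with $[l_n]\notin G$; in particular $\diam(l_n)\leq 2^{-n+1}$. Passing to a subsequence if necessary to ensure the diameters strictly decrease to $0$ yields the desired sequence $\{l_n\}_{n\in\omega}$ at $y$ with $\diam(l_n)\searrow 0$ and $[l_n]\notin G$. The step I expect to require the most care is the basepoint adjustment in the previous paragraph: one must verify that ``in $G$'' is genuinely basepoint-free (via Lemma \ref{pointindependence} and normality) so that conjugating a loop by a path staying inside $V_n$ preserves both membership in $G$ and containment in the small set, which is exactly what makes the reduction to a single basepoint $y$ go through.
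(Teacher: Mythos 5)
Your proof is correct and takes essentially the same route as the paper's: both extract a ``bad'' point $y$ via the contrapositive of Lemma \ref{opencover}, then use local path connectedness together with normality of $G$ to replace the arbitrarily based small loops by loops based at $y$ (the paper conjugates the witness loops directly by small paths, while you package the same conjugation trick as a contradiction argument inside $V_n$ --- a cosmetic difference). One trivial slip: with $p$ a path from $w$ to $y$, the loop based at $y$ is $p^{-1}*c*p$ under the paper's concatenation convention, not $p*c*p^{-1}$.
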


\begin{proof}  If $G$ is not open we have by the contrapositive of Lemma \ref{opencover} that there must exist some point $y \in X$ such that for any open neighborhood $U$ of $y$ there is a loop in $U$ which is not in $G$.  We get a sequence of loops $\{l_n\}_{n\in \omega}$ with $\diam(l_n([0,1]) \cup \{y\}) \leq 2^{-n}$ and $[l_n]\notin G$.  By local path connectedness we may take a subsequence of the $l_n$ whose base point is close enough to $y$ and join the basepoint to $y$ via a small path, so that the loops may eventually be assumed to have been based at $y$.  Taking a subsequence having all loops based at $y$ gives the desired result
\end{proof}

Through the remainder of Section \ref{Polishspaces} we shall assume $\Po$ is a nice pointclass with BP.  Thus for example one can take $\Po = \Sigma_1^1$.  The discussion of pointclasses will take place in Section \ref{conclusion}.

\begin{theorem}\label{unc}  Suppose $X$ is locally path connected Polish.  If $G \unlhd \pi_1(X)$ is $\Po$ then $\card(\pi_1(X)/G)$ is either $\leq \aleph_0$ (in case $G$ is open) or  $2^{\aleph_0}$ (in case $G$ is not open).
\end{theorem}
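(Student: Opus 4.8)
The plan is to prove the dichotomy by cases according to whether $G$ is open, which is exactly the trigger the statement indicates. The open case yields the countable bound and the non-open case the continuum cardinality; the latter is the substantive half and reduces to Lemma~\ref{uncountable}, whose proof already contains the descriptive-set-theoretic heavy lifting (via the Pawlikowski lemma). So the theorem itself is essentially an organized application of the preceding lemmas.

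For the open case, fix a basepoint $x$ and observe that for each $l_0 \in L_x$ the left coset $[l_0]G \subseteq L_x$ is precisely the preimage of $\bigcup G$ under the continuous translation map $l \mapsto (l_0)^{-1} * l$, which is the computation underlying Lemma~\ref{relations}. Since $G$ is open, $\bigcup G$ is open, and hence each such coset is open in $L_x$. These cosets are pairwise disjoint, cover $L_x$, and are in bijection with $\pi_1(X)/G$. Because $X$ is Polish, $L_x$ is separable, and a separable metric space admits only countably many pairwise disjoint nonempty open sets (assign to each such set a point of a fixed countable dense set, injectively by disjointness). Hence $\card(\pi_1(X)/G) \leq \aleph_0$.

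For the non-open case, I would apply Lemma~\ref{openorsequence}: since $X$ is metric and locally path connected and $G$ is not open, there is a point $y \in X$ and loops $\{l_n\}_{n\in\omega}$ based at $y$ with $\diam(l_n) \searrow 0$ and $[l_n] \notin G$. I then set $K = \pi_1(X, y)$, which is closed because $\bigcup K = L_y$ is the entire loop space. To invoke Lemma~\ref{uncountable} with $G \unlhd K$, I need $G$ to be $\Po$ when based at $y$; this follows from Lemma~\ref{pathiso}, using that $\Po$ is nice and that $G$, being normal, is basepoint-free by Lemma~\ref{pointindependence}. The loops $l_n$ lie in $\bigcup K \setminus \bigcup G$ with diameters tending to $0$, so the ``arbitrarily small loops'' hypothesis is met. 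Lemma~\ref{uncountable} then gives $\card(K/G) = 2^{\aleph_0}$, and since the quotient cardinality is independent of basepoint for a normal subgroup, $\card(\pi_1(X)/G) = 2^{\aleph_0}$.

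The main obstacle is bookkeeping rather than a new idea: one must confirm that all hypotheses of Lemma~\ref{uncountable} transfer cleanly to the basepoint $y$ produced by Lemma~\ref{openorsequence}, in particular that $G$ is simultaneously normal in $K = \pi_1(X,y)$ and of pointclass $\Po$ there, so that the real work already carried out in Lemma~\ref{uncountable} can be cited directly. Since all the required basepoint-transfer statements are supplied by Lemmas~\ref{pointindependence} and~\ref{pathiso}, no genuinely new difficulty arises.
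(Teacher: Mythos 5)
Your proposal is correct and follows essentially the same route as the paper: the open case via disjoint open cosets (Lemma~\ref{relations}) and separability of $L_x$, and the non-open case via Lemma~\ref{openorsequence} followed by Lemma~\ref{uncountable}, with the basepoint transfer justified by niceness of $\Po$ (the paper cites this implicitly where you cite Lemmas~\ref{pointindependence} and~\ref{pathiso} explicitly). No gaps; your bookkeeping of the hypotheses of Lemma~\ref{uncountable} matches what the paper does.
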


\begin{proof}  If $G$ is open then the collection of left cosets $\{[l]G\}_{l\in  L_x}$ is a covering of $L_x$ by pairwise disjoint open sets, and since $L_x$ is separable we know that the collection $\{[l]G\}_{l\in  L_x}$ is countable.  Else, by Lemma \ref{openorsequence} we get a point $y\in X$ and a sequence of loops $\{l_n\}_{n\in \omega}$ with $\diam(l_n)\searrow 0$ and $[l_n]\notin G$.  Considering $G$ as a subgroup of $\pi_1(X, y)$ we see that $G$ is $\Po$ since $\Po$ is nice, and thus we have satisfied the hypotheses of Lemma \ref{uncountable} and we are done.
\end{proof}

The above may be strengthened if $X$ is also compact.  Recall that a \textbf{Peano continuum} is a path connected, locally path connected compact metrizable space.

\begin{theorem}\label{Shelahgen}  If $X$ is a Peano continuum and $G \unlhd \pi_1(X, x)$ is $\Po$ then $\pi_1(X, x)/G$ is either finitely generated (in case $G$ is open) or of cardinality $2^{\aleph_0}$ (in case $G$ is not open).
\end{theorem}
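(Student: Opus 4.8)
The plan is to dispose of the non-open case immediately and to reserve the real work for the open case. Since a Peano continuum is compact metrizable it is Polish and locally path connected, so the hypotheses of Theorem \ref{unc} are met verbatim: if $G$ is not open then $\card(\pi_1(X,x)/G) = 2^{\aleph_0}$. It remains to upgrade the conclusion of Theorem \ref{unc} in the open case from ``$\leq \aleph_0$'' to ``finitely generated,'' and this is the only place where compactness of $X$ must genuinely be exploited.

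For the open case I would pass to a covering space. By Lemma \ref{opencoverforopen} there is an open cover $\mathcal{U}$ of $X$ each of whose members has all of its loops in $\bigcup G$; shrinking each member to a path-connected neighborhood using local path connectedness, every point of $X$ has a path-connected neighborhood $U$ with $\iota_*\pi_1(U) \subseteq G$. This is precisely the semilocal condition (relative to $G$) needed to construct, by standard covering space theory, a covering $p:\tilde{X} \to X$ with $p_*\pi_1(\tilde{X}) = G$; since $G$ is normal this covering is regular and its group of deck transformations is $Q \cong \pi_1(X,x)/G$. The identification of $\pi_1(X,x)/G$ with a group of homeomorphisms of $\tilde{X}$ is the conceptual crux of the argument.

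I would then read the topology of $\tilde{X}$ off from that of $X$. Because $p$ is a local homeomorphism and $X$ is compact metrizable, $\tilde{X}$ is connected, locally path connected, Hausdorff and locally compact, and the action of $Q$ is free, properly discontinuous, and cocompact (its orbit space is $X$). Finite generation of $Q$ then follows from a standard compactness argument: choose a relatively compact open set $W \subseteq \tilde{X}$ with $QW = \tilde{X}$ (lift a finite evenly covered cover of $X$ and enlarge to relatively compact opens), and set $S = \{q \in Q : qW \cap W \neq \emptyset\}$. Properness of the deck action together with compactness of $\overline{W}$ makes $S$ finite, and then $\langle S\rangle W$ and $(Q \setminus \langle S\rangle)W$ are disjoint open sets covering the connected space $\tilde{X}$, the former being nonempty; hence $(Q\setminus \langle S\rangle)W = \emptyset$ and $Q = \langle S\rangle$. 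Thus $\pi_1(X,x)/G$ is finitely generated.

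The step I expect to be the main obstacle is not the final group-theoretic argument but obtaining a clean handle on ``straightening'' loops if one instead argues directly in the loop space using the nerve of a finite subcover: the pairwise intersections $U_i \cap U_j$ of members of the cover may fail to be path connected (an open subset of a Peano continuum can have infinitely many components), so a naive choice of finitely many generating ``edge loops'' need not capture all of $\pi_1(X,x)/G$, and the would-be difference loops cross two charts and need not lie in $G$. Routing through the covering space $\tilde{X}$ and invoking properness and cocompactness of the deck action sidesteps this difficulty entirely, which is why I would organize the proof around $\tilde{X}$ rather than around the nerve.
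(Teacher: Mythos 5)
Your proposal is correct, and in the open case it takes a genuinely different route from the paper. Both arguments dispose of the non-open case identically via Theorem \ref{unc}, and both begin the open case with Lemma \ref{opencoverforopen}; but from there the paper simply refines the resulting cover (Lebesgue number, balls of radius $\epsilon/4$, path connected refinement, finite subcover by compactness) into a finite cover that is $2$-set simple rel the quotient map $\pi_1(X)\rightarrow\pi_1(X)/G$ and then quotes Theorem 7.3 of \cite{CC}, which asserts that a homomorphism admitting a finite $2$-set simple cover has finitely generated image. In particular, the nerve-theoretic difficulty you flag at the end (pairwise intersections of chart sets need not be path connected) is absorbed into the cited theorem of Cannon and Conner rather than confronted directly. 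Your route instead observes that the cover from Lemma \ref{opencoverforopen} witnesses Spanier's criterion for the existence of a covering with $p_*\pi_1(\tilde{X})=G$ (note that it is this refined criterion from \cite{Sp}, not the usual semilocally simply connected hypothesis, that is needed, and that normality of $G$ is what converts ``all loops lying in an element of $\mathcal{U}$ are in $G$'' into $\pi_1(\mathcal{U},x)\leq G$), and then runs the \v{S}varc--Milnor connectedness argument on the deck action. What each approach buys: the paper's proof is much shorter because \cite{CC} is a tool it reuses throughout (and which also supplies the finite-presentation refinement in Theorem \ref{BigTheorem}); yours avoids \cite{CC} entirely, at the cost of covering-space machinery, and yields the stronger geometric conclusion that $\pi_1(X)/G$ acts freely, properly and cocompactly on a connected, locally compact space.

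Two of your assertions need a line of justification, though both are true. First, ``enlarge to relatively compact opens'' should really be ``shrink'': take a finite cover $\{U_i\}$ by path connected evenly covered sets, shrink it to a cover $\{U_i'\}$ with $\overline{U_i'}\subseteq U_i$, and lift each $U_i'$ into a chosen sheet $\tilde{U}_i$ over $U_i$; the closure of this lift is contained in $(p|_{\tilde{U}_i})^{-1}(\overline{U_i'})$ (a limit point lying in a different sheet over $U_i$ would force two disjoint sheets to intersect), hence is compact, and regularity gives $Q W=\tilde{X}$ for the union $W$ of these lifts. Second, the properness you invoke --- finiteness of $\{q\in Q: q\overline{W}\cap\overline{W}\neq\emptyset\}$ --- is strictly stronger than the ``properly discontinuous'' property that every covering space action enjoys, and it can genuinely fail for covering actions when the orbit space is not Hausdorff; here it holds, for instance, because $\overline{W}$ is covered by finitely many sheets, and if the base cover is refined so that any two intersecting members lie in a common evenly covered set, then for each ordered pair of these sheets there is at most one deck transformation carrying the first to meet the second (distinct deck translates of a sheet are disjoint or equal, and a deck transformation preserving a sheet is the identity). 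With these two details supplied, your argument is complete.
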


\begin{proof}  By Theorem \ref{unc} we need only show that $\pi_1(X)/G$ is finitely generated if $G$ is open.  For this we will use a theorem from \cite{CC} which will require a definition.  Let $\phi: \pi_1(X) \rightarrow H$ be a group homomorphism.  We say an open cover $\mathcal{U}$ is \textbf{2-set simple rel $\phi$} if each element of $\mathcal{U}$ is path connected and any loop in the union of two elements of $\mathcal{U}$ is in the kernel of $\phi$.  This property of a cover implies that for any nerve associated with with $\mathcal{U}$ there is a homomorphism from the fundamental group of the nerve with the same image as $\phi$.  The following is a part of Theorem 7.3 in \cite{CC}:
\begin{theorem*}
Let $X$ be path connected, $\phi: \pi_1(X) \rightarrow H$ a homomorphism and $\mathcal{U}$ a $2$-set simple cover rel $\phi$.  If $\mathcal{U}$ is finite then $\phi(\pi_1(X))$ is finitely generated.
\end{theorem*}

Now, assuming $G$ is open we get by Lemma \ref{opencoverforopen} an open cover $\mathcal{U}_1$ for $X$ such that any loop contained in an element of $\mathcal{U}_1$ is in $G$.  Let $\epsilon>0$ be a Lebesgue number for the cover $\mathcal{U}_1$ and let $\mathcal{U}_2$ be a cover of $X$ by open balls of redius $\frac{\epsilon}{4}$.  By local path connectedness let $\mathcal{U}$ be an open cover of $X$ by path connected sets, each of which is contained in an element of $\mathcal{U}_2$.  By compactness we may pick $\mathcal{U}$ to be finite, and it is clear that $\mathcal{U}$ is $2$-set simple rel the quotient projection $\pi_1(X) \rightarrow \pi_1(X)/G$.  We are done by the theorem of Cannon and Conner that is quoted above.
\end{proof}

The conditions on $G$ and the pointclass $\Po$ cannot be removed in Theorems \ref{unc} and \ref{Shelahgen}  as evidenced by the following example.

\begin{example}\label{productofprojectiveplanes}  Let $P$ denote the projective plane and $ P^{\omega} = \prod_{\omega}P$.  Let $x\in P$ and let $\overline{x} \in P^{\omega}$ be the point whose every coordinate is $x$.  By the functoriality of the fundamental group there is a natural isomorphism $\pi_1(P^{\omega}, \overline{x}) \simeq \prod_{\omega}\pi_1(P, x)$.  By Lemma \ref{productstructure} the loop space $L_{\overline{x}}$ is homeomorphic to the product $\prod_{\omega} L_x$ where the loop space $L_x$ at the $n$th coordinate has diameter $\leq 2^{-n}$.  By the coordinatewise isomorphism $\prod_{\omega}\pi_1(P, x) \simeq \prod_{\omega}\mathbb{Z}/2\mathbb{Z}$ we may regard elements of $\pi_1(P^{\omega}, \overline{x})$ as $\omega$ sequences of $0$s and $1$s.  By the shrinking of the metrics on the coordinate loop spaces, any $g\in \pi_1(P^{\omega}, \overline{x})$ whose first $n$ coordinates are $0$s has a representative loop $l\in g$ with $\diam(l) \leq 2^{-n}$.  The space $P^{\omega}$ is a Peano continuum.

Let $\kappa$ be any cardinal satisfying $\aleph_0 \leq \kappa \leq 2^{\aleph_0}$.  As $\prod_{\omega} \mathbb{Z}/2\mathbb{Z}$ is a $\mathbb{Z}/2\mathbb{Z}$ vector space, we select a linearly independent set $W\subseteq \prod_{\omega} \mathbb{Z}/2\mathbb{Z}$ such that $W \supseteq \{(1, 0, 0,0, \ldots), (0,1, 0, 0,\ldots), (0,0,1, 0, \ldots), \ldots)\}$ and $\card(W) = \kappa$.    Let $B$ be a basis for $\prod_{\omega} \mathbb{Z}/2\mathbb{Z}$ with $B \supseteq W$.  Letting $H = \langle W\rangle$ and $G = \langle W\setminus B\rangle$ we have $\prod_{\omega} \mathbb{Z}/2\mathbb{Z} \simeq H \bigoplus G$.  Now $G\unlhd \pi_1(P^{\omega}, \overline{x})$ and $\pi_1(P^{\omega}, \overline{x})/G \simeq H$ is of cardinality $\kappa$.

Letting $\kappa = \aleph_0$ gives a counterexample to the claim of Lemma \ref{uncountable} with the ``$G$ of pointclass $\Po$'' and ``$\Po$ has BP and is closed under continuous preimages in Polish spaces'' hypotheses removed.  By considering a model of \textbf{ZFC} in which \textbf{CH} fails, we let $\kappa = \aleph_1$ and see that Theorem \ref{unc} can fail if the conditions on $G$ and $\Po$ are removed.  Similarly removing these conditions can make the conclusion of Theorem \ref{Shelahgen} fail.  The quotient can be a countable infinitely generated group or a group of cardinality violating \textbf{CH} if one is in a model of \textbf{ZFC} + $\neg$ \textbf{CH}.
\end{example}

The conclusion of Theorem \ref{Shelahgen} cannot be strengthened by replacing ``finitely generated'' by ``finitely presented'' by the following basic example.

\begin{example}\label{bouquetexample}  Let $X$ be the bouquet of two circles and $H$ be a $2$-generated group which is not finitely presented (for example, the lamplighter group).  The fundamental group $\pi_1(X)$ is the free group of rank $2$.  Let $\phi: \pi_1(X) \rightarrow H$ be a homomorphism given by taking each of the free generators of $\pi_1(X)$ to a distinct generator of $H$.  The space $X$ is a semilocally simply connected Peano continuum and $\ker(\phi) = G$ is open by Proposition \ref{slsc}, but $\pi_1(X)/G \simeq H$ is not finitely presented.  Similar examples can be given by replacing the number $2$ by any finite number $\geq 2$ and letting $G$ be replaced by any other $n$-generated group which is not finitely presented.
\end{example}

We name some of the numerous applications of the above theorems.

\begin{A}  Suppose $X$ is a locally path connected Polish space.  The following groups are of cardinality $2^{\aleph_0}$ or $\leq \aleph_0$, and in case $X$ is compact they are of cardinality $2^{\aleph_0}$ or are finitely generated:

\begin{enumerate}  \item $\pi_1(X)$

\item $\pi_1(X)/(\pi_1(X))^{(\alpha)}$ for any $\alpha<\omega_1$ (derived series)

\item $\pi_1(X)/(\pi_1(X))_n$ for any $n\in \omega$ (lower central series)

\item $\pi_1(X)/N$ where $N$ is the normal subgroup generated by squares of elements, cubes of elements, or $n$-th powers of elements
\end{enumerate}

\noindent In case $X$ is compact then countability of the fundamental group is equivalent to being finitely presented.
\end{A}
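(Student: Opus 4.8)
The plan is to treat Theorem \ref{BigTheorem} as a catalogue of applications of Theorems \ref{unc} and \ref{Shelahgen}: in each of (1)--(4) I would exhibit the relevant subgroup $N$ as a \emph{normal} subgroup of $\pi_1(X)$ which is of pointclass $\Po$ for a nice pointclass with BP (for concreteness $\Po = \Sigma_1^1$), and then simply read off the conclusion. Indeed, once $N \unlhd \pi_1(X)$ is known to be $\Po$, Theorem \ref{unc} gives that $\pi_1(X)/N$ has cardinality $\leq \aleph_0$ or $2^{\aleph_0}$, and when $X$ is compact Theorem \ref{Shelahgen} sharpens the small alternative to ``finitely generated.'' Thus the entire content of parts (1)--(4) is the verification that the listed subgroups are normal and $\Sigma_1^1$, which is exactly what the closure properties of Theorem \ref{closureprop} are designed to supply.

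For part (1) I would take $N = 1$, which is analytic and normal by Theorem \ref{closureprop}(3), so that $\pi_1(X)/N = \pi_1(X)$ is governed directly. For part (2) each term $(\pi_1(X))^{(\alpha)}$ with $\alpha < \omega_1$ is characteristic, hence normal, and is $\Sigma_1^1$ by Theorem \ref{closureprop}(10), whose hypothesis is precisely that the ordinal be countable. Part (3) is identical, using the lower central series clause of Theorem \ref{closureprop}(10). For part (4) the key observation is that the set of $n$-th powers $\{g^n : g \in \pi_1(X)\}$ is invariant under conjugation, since $(hgh^{-1})^n = hg^nh^{-1}$; hence the subgroup it generates is \emph{already} normal and coincides with $N$, and it is $\Sigma_1^1$ by applying Theorem \ref{closureprop}(8) to the one-variable reduced word $w(x_0) = x_0^n$ and the $\Sigma_1^1$ subgroup $G_0 = \pi_1(X)$. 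In every case $N$ is normal and $\Sigma_1^1$, so Theorems \ref{unc} and \ref{Shelahgen} apply and yield the stated dichotomies.

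It remains to prove, for $X$ a Peano continuum, that $\pi_1(X)$ is countable if and only if it is finitely presented. One direction is immediate, as every finitely presented group is countable. For the converse suppose $\pi_1(X)$ is countable. Since $\aleph_0 < 2^{\aleph_0}$, part (1) (equivalently Theorem \ref{unc} with $N = 1$) rules out the uncountable alternative, so $1$ is open; by Proposition \ref{slsc} the space $X$ is semi-locally simply connected. I would then repeat the covering construction from the proof of Theorem \ref{Shelahgen}, but taking the balls small enough (radius below a suitable fraction, e.g.\ $\tfrac{1}{6}$, of the Lebesgue number $\epsilon$ of the cover produced by Lemma \ref{opencoverforopen}) that every connected union of three cover elements has diameter $< \epsilon$ and hence lies in a single element of that cover, so carries only nullhomotopic loops; local path connectedness and compactness then give a \emph{finite} such refinement. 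This produces a finite open cover that is ``$3$-set simple'' relative to the identity homomorphism $\pi_1(X) \to \pi_1(X)$, and the finite-presentation clause of Theorem 7.3 of \cite{CC} identifies $\pi_1(X)$ with the finitely presented fundamental group of the (finite) nerve.

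The identifications in parts (1)--(4) are pure bookkeeping against Theorem \ref{closureprop}, the only substantive point being the conjugation-invariance used in (4). I expect the real obstacle to be the finite-presentation direction of the compact statement: Theorem \ref{Shelahgen} only delivers finite generation, and Example \ref{bouquetexample} shows that a quotient of $\pi_1(X)$ by an open normal subgroup need not be finitely presented, so finite presentation of $\pi_1(X)$ itself genuinely requires the extra structure of a $3$-set simple cover (controlling the relations arising from triple overlaps) rather than the merely $2$-set simple cover used for finite generation. Verifying carefully that fineness forces triple unions to have small diameter, and that the cited portion of \cite{CC} indeed yields an isomorphism onto the nerve group, is where the attention must go.
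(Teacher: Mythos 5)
Your proposal is correct and takes essentially the same route as the paper: the paper's own proof simply reads the noncompact case of (1)--(4) off Theorem \ref{unc}, the compact case of (2)--(4) off Theorem \ref{Shelahgen} (with the normality and $\Sigma_1^1$ verifications left implicit in Theorem \ref{closureprop}), and cites Theorem 7.3 of \cite{CC} for the finite-presentation statement. Your explicit construction of the finite $3$-set simple cover and the closure-property bookkeeping are just the details the paper leaves to those citations.
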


\begin{proof}  The noncompact case in parts (1)-(4) immediately follow from Theorem \ref{unc}.  For parts (2)-(4) in the compact case we apply Theorem \ref{Shelahgen}.  That $\pi_1(X)$ would be finitely presented follows from Theorem 7.3 in \cite{CC} in part (1) assuming $X$ is compact.
\end{proof}

Part (1) in the compact case is the main result of the papers \cite{Sh2} and \cite{P}, and part (2) with $\alpha = 1$ (both compact and noncompact cases) is proven in \cite{CoCo}.

Towards proving Theorem \ref{ascendingnormal} we give the following technical lemma.

\begin{lemma}\label{ascending}   Let $X$ be a Polish space.  Suppose $N\unlhd K \leq \pi_1(X, x)$ is such that $\bigcup N = \bigcup_{n = 0}^{\infty} N_n$ with each $N_n$ closed under inverses and homotopy and containing the trivial loop, and that $K$ is closed.  Assume also that $N_n*N_{m}\subseteq N_{n+m}$.  If each $N_n$ is $\Po$ and for each $n\in \omega$ there exist loops at $x$ of arbitrarily small diameter in $\bigcup K$ not contained in $N_n$, then $K/N$ is of cardinality $2^{\aleph_0}$.

\end{lemma}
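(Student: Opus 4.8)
The plan is to mirror the proof of Lemma \ref{uncountable}, replacing the single set $G$ there by the filtered union $\bigcup N=\bigcup_{m}N_m$ and compensating for the loss of a single pointclass bound by a Baire category argument. First I would use the hypotheses to select, for each $n$, a loop $l_n$ at $x$ with $\diam(l_n)\le 2^{-n}$ and---crucially---$[l_n]\notin\bigcup N$; setting $l_n^0$ to be the constant loop and $l_n^1=l_n$, the assignment $\alpha\mapsto l^\alpha=l_0^{\alpha(0)}*(l_1^{\alpha(1)}*(l_2^{\alpha(2)}*\cdots))$ is a well-defined continuous map $\{0,1\}^\omega\to L_x$ (the tails shrink since $\diam(l_n)\searrow 0$), and each $[l^\alpha]\in K$ since the partial products lie in $\bigcup K$ and $K$ is closed. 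Let $\sim$ on $L_x$ be $l\sim l'$ iff $[l]N=[l']N$, and let $\approx$ be its pullback to $\{0,1\}^\omega$. Exactly as in Lemma \ref{uncountable}, if $\alpha,\beta$ differ only at coordinate $n$ then $[(l^\beta)^{-1}*l^\alpha]$ is a conjugate of $[l_n]^{\pm 1}$, so $\alpha\approx\beta$ would force $[l_n]\in N$ by normality of $N$ in $K$, contradicting $[l_n]\notin\bigcup N$. Thus single coordinate flips are never $\approx$-related.

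For the descriptive-set-theoretic input I would verify that $\approx$ has the property of Baire even though $\bigcup N$ need not lie in $\Po$. Writing $F(\alpha,\beta)=(l^\beta)^{-1}*l^\alpha$, which is continuous, we have $\approx=F^{-1}(\bigcup N)=\bigcup_{m}F^{-1}(N_m)$ (this uses that $\bigcup N$ is closed under homotopy, exactly as in Lemma \ref{relations}); each $F^{-1}(N_m)$ is $\Po$ and hence has BP, and since the sets with BP form a $\sigma$-algebra the countable union $\approx$ has BP. Lemma \ref{Pawlikowski} then yields $2^{\aleph_0}$ many $\approx$-classes, that is $\card(K/N)\ge 2^{\aleph_0}$, and the reverse inequality is immediate since $\card(L_x)\le 2^{\aleph_0}$. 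All of this is routine once the loops $l_n$ are in hand, so the entire difficulty is concentrated in producing loops of arbitrarily small diameter lying outside all of $\bigcup N$, and not merely outside each individual $N_n$.

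The hard part is therefore upgrading the hypothesis from ``outside $N_n$'' to ``outside $\bigcup N$,'' and this is precisely where the grading $N_n*N_m\subseteq N_{n+m}$ must enter. I would argue by Baire category in the Polish space $\bigcup K$ (which is closed in the Polish space $L_x$): it suffices to show that each $N_m$ is meager, for then $\bigcup N$ is meager, its complement in $\bigcup K$ is comeager and dense, and density supplies loops outside $\bigcup N$ of arbitrarily small diameter near the constant loops (transferring basepoints to $x$ as in Lemma \ref{openorsequence}). To see that $N_m$ is meager, suppose some $N_{m_0}$ were non-meager; being $\Po$ it has BP, hence it is comeager in some nonempty open set. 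A Pettis-type argument---using that $N_{m_0}$ is closed under inverses, contains the constant loops, and satisfies $N_{m_0}*N_{m_0}\subseteq N_{2m_0}$---should then force every sufficiently small loop into $N_{2m_0}$, contradicting the hypothesis applied with $n=2m_0$. I expect this Pettis step to be the main obstacle: the loop space $L_x$ is not a topological group, for although concatenation is $1$-Lipschitz in the sup metric it admits no two-sided inverses and so left translations are not homeomorphisms, and the classical proof that a symmetric non-meager set with BP is ``square-dense'' near the identity does not transcribe verbatim. I would circumvent this by running the translation argument at the level of homotopy classes, exploiting that $N_{m_0}$ is a union of full homotopy classes together with the Lipschitz continuity of concatenation.
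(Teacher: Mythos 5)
Your first two steps are sound and essentially coincide with the paper's own reduction: the paper likewise observes (via the proof of Lemma \ref{uncountable}, using that a nice pointclass is closed under countable unions, so that $\bigcup N$ is $\Po$ and has BP) that everything comes down to producing, for each $\epsilon>0$, a loop at $x$ in $\bigcup K$ of diameter less than $\epsilon$ lying outside all of $\bigcup N$, not merely outside each $N_n$. The genuine gap is exactly the step you flag and defer: the claim that a non-meager $N_{m_0}$ yields a contradiction via a Pettis-type argument in $\bigcup K$. This step cannot be repaired ``at the level of homotopy classes,'' because in the sup metric every algebraically meaningful self-map of $\bigcup K$ --- concatenation with a fixed loop, or insertion of a small loop at a prescribed time after reparametrization --- is an isometric embedding whose image is \emph{nowhere dense} (prescribing a loop's values on a subinterval is an infinite-codimension condition). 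Hence comeagerness of $N_{m_0}$ in a ball $B(l_*,\delta)\cap\bigcup K$ says nothing about $N_{m_0}$ on the image of that ball under any such map, which is precisely what a Pettis argument requires; worse, $l_*$ may lie far from the constant loop, while the hypothesis you need to contradict concerns small loops near the constant loop, and in the absence of a transitive family of homeomorphisms there is no mechanism to transport category information from one region of $\bigcup K$ to another. It is not even clear that your intermediate claim (each $N_m$ is meager in $\bigcup K$) follows from the hypotheses at all: the paper never proves it, and its proof establishes only a strictly weaker meagerness statement.

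What the paper does instead is run the category argument in the one place where the relevant translations \emph{are} homeomorphisms: the Cantor set. Assuming toward a contradiction that every loop in $\bigcup K$ of diameter less than $\epsilon$ lies in $\bigcup N$, it defines the weight $\phi(l)=\min\{k: l\in N_k\}$, which by the grading $N_n*N_m\subseteq N_{n+m}$ is subadditive under concatenation and invariant under inversion and homotopy, and uses the hypothesis (plus the monotonicity $N_k\subseteq N_{k+m}$, which follows from the grading and closure under homotopy) to choose small loops with rapidly growing weights, $\phi(l_n)>n+\sum_{m<n}\phi(l_m)$. It then forms $l^\alpha$ as you do, but applies Baire category to the pullbacks $E_n=\{\alpha\in\{0,1\}^{\omega}: l^{\alpha}\in N_n\}$, which are $\Po$ and hence have BP as subsets of $\{0,1\}^{\omega}$; the role of Pettis is played by the coordinate-flip homeomorphisms of the Cantor set, and the weight estimates convert ``$E_n$ comeager in a basic clopen set and stable under finitely many flips'' into a numerical contradiction, so each $E_n$ is meager while $\bigcup_n E_n=\{0,1\}^{\omega}$, contradicting the Baire category theorem. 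This yields the desired small loop outside $\bigcup N$ without ever asserting meagerness of any $N_m$ inside $\bigcup K$. Until you can either prove your intermediate meagerness claim or replace it, the central step of your proposal is missing, and the obstruction you yourself identify appears fatal to any version of it carried out directly in the loop space.
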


\begin{proof}  Let $\epsilon>0$ be given.  By the proof of Lemma \ref{uncountable} we need only show that there is a loop at $x$ in $\bigcup K$ of diameter less than $\epsilon$ that is not in $\bigcup N$, since $N$ is $\Po$.  For contradiction we assume that no such loop exists.  For each loop in $\bigcup K$ of diameter less than $\epsilon$ let $\phi$ map that loop to the minimal $k$ such that $l\in N_k$.  For two loops $l_1, l_2$ of radius less than $\epsilon$ we have that $\phi(l_1\ast l_2)\leq \phi(l_1) + \phi(l_2)$ and $\phi(l_1) = \phi(l_1^{-1})$.  Let $\{l_n\}_{n\in \omega}$ be a sequence of loops such that $\diam(l_n) < \epsilon 2^{-n}$ and that $\phi(l_0) >1$ and $\phi(l_n) > n + \sum_{m=0}^{n-1} \phi(l_m)$.  In particular none of the $l_n$ is nullhomotopic.  Define $l^{\alpha}$ as before for each $\alpha$ in the Cantor set.  Abuse notation by letting $\phi:\{0,1\}^{\omega}\rightarrow \omega$ be defined by $\phi(\alpha) = \phi(l^{\alpha})$.

Let $E_n = \{\alpha \in \{0,1\}^{\omega}: l^{\alpha} \in N_n\}$.  As we are assuming that there is no loop in $\bigcup K$ of diameter less than $\epsilon$  that is not in $\bigcup N$, we have $\bigcup_{n=0}^{\infty}E_n = \{0,1\}^{\omega}$.  We will derive our contradiction if we show that each $E_n$ is meager, which would imply that $\{0,1\}^{\omega}$ is meager in itself.  Each $E_n$ is $\Po$, and so has the property of Baire.  Supposing $E_n$ is not meager there exists a nonempty open set in which $E_n$ is comeager.  In particular there exists a basic open set $U(\delta_0, \ldots, \delta_k) = \{\alpha \in \{0,1\}^{\omega}: \alpha(0) = \delta_0,\ldots, \alpha(k) = \delta_k\}$ such that $E_n \cap U(\delta_0, \ldots, \delta_k)$ is comeager in $U(\delta_0, \ldots, \delta_k)$.  For each $p \geq k+1$ let $\beth_p:U(\delta_0, \ldots, \delta_k) \rightarrow U(\delta_0, \ldots, \delta_k)$ be the homeomorphism that changes the $p$ coordinate.  Then $U(\delta_0, \ldots, \delta_k)\setminus \beth_p(E_n)$ is meager for each $p \geq k+1$.  Then in fact there exists $\alpha \in U(\delta_1, \ldots, \delta_k)$ such that switching finitely many of the coordinates beyond the $k$th coordinate gives an element of $E_n$.  It cannot be that the support of $\alpha$ is finite, for if $N\in \omega$ is a bound on the support of $\alpha$ (we can assume $N >2n$), then $n \geq \phi(l^{\alpha}\ast f_{N+1}) \geq \phi(l_{N+1}) - \phi(l^{\alpha})>N+1 - n > n$, a contradiction.  Thus taking a subsequence of the $l_n$, we may assume that $\alpha = (1, 1, \ldots)$ and that $U(\delta_1, \ldots, \delta_k) = \{0, 1\}^{\omega}$.  We assume that this subsequence was the original sequence.

Let $\beta_k, \gamma_k \in \{0,1\}^{\omega}$ be given by $\beta_k(m) = \begin{cases} 0$ if $m<k\\ 1$ if $m\geq k   \end{cases}$ and $\gamma_k(m) = \begin{cases} 1$ if $m<k\\ 0$ if $m \geq k   \end{cases}$.  We have that $\phi(\gamma_k) \geq k$ and $\phi(\beta_k) \geq \phi(\gamma_k) - \phi(\alpha) \geq k-\phi(\alpha)$, so that if $k = 2n+1$ we have on the one hand that $\beta_k \in E_n$ and on the other hand $\phi(\beta_k) \geq k-\phi(\alpha)\geq (2n+1) - n$, a contradiction.
\end{proof}

This gives the following:

\begin{B}  If $X$ is a Peano continuum there does not exist a strictly increasing infinite sequence of $\Po$ normal subgroups $\{G_n\}_{n\in \omega}$ of $\pi_1(X)$ such that $\bigcup_{n\in \omega} G_n = \pi_1(X)$.
\end{B}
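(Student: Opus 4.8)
The plan is to argue by contradiction using the technical Lemma \ref{ascending}. Suppose $\{G_n\}_{n \in \omega}$ is a strictly increasing sequence of $\Po$ normal subgroups of $\pi_1(X)$ with $\bigcup_{n\in\omega} G_n = \pi_1(X)$. I would apply Lemma \ref{ascending} with $K = N = \pi_1(X, x)$ (so that $K$ is trivially closed and $N \unlhd K$) and with $N_n = \bigcup G_n$ the set of loops representing elements of $G_n$. Since each $G_n$ is a subgroup, each $N_n$ is closed under inverses and homotopy and contains the trivial loop; because the chain is increasing, $G_n, G_m \subseteq G_{n+m}$ forces $N_n * N_m \subseteq N_{n+m}$; each $N_n$ is $\Po$ by hypothesis (and remains so after any change of basepoint by Lemma \ref{pathiso}); and $\bigcup N = L_x = \bigcup_{n} N_n$ because $\bigcup_n G_n = \pi_1(X)$. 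The conclusion $\card(K/N) = 2^{\aleph_0}$ would then contradict the fact that $K/N$ is trivial. Everything hinges on verifying the remaining hypothesis of Lemma \ref{ascending}: that there is a basepoint at which, for every $n$, there are loops of arbitrarily small diameter not in $N_n$.

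First I would rule out any $G_n$ being open. If $G_n$ were open, then since $X$ is a Peano continuum Theorem \ref{Shelahgen} would make $\pi_1(X)/G_n$ finitely generated. But passing to the quotient by $G_n$ turns the chain $\{G_m\}_{m \geq n}$ into a strictly increasing chain of \emph{proper} normal subgroups of $\pi_1(X)/G_n$ whose union is all of $\pi_1(X)/G_n$; a finitely generated group admits no such chain, since a finite generating set would lie in a single term. Hence every $G_n$ is non-open.

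The heart of the argument, and the step I expect to be the main obstacle, is producing a single basepoint that works for all $n$ simultaneously. For each $n$, non-openness of $G_n$ together with Lemma \ref{openorsequence} yields a point $y_n \in X$ admitting loops of arbitrarily small diameter not representing elements of $G_n$. Using compactness of $X$ I would pass to a convergent subsequence $y_{n_k} \to y^*$. For a fixed $n$ and any target diameter, I would choose $k$ large with $n_k \geq n$ and $y_{n_k}$ close to $y^*$, select a small loop $\ell$ at $y_{n_k}$ with $[\ell] \notin G_{n_k} \supseteq G_n$, and join $y^*$ to $y_{n_k}$ by a short path $\rho$ (available by local path connectedness) to form the small loop $\rho * \ell * \rho^{-1}$ at $y^*$. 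Since $G_n$ is normal, its image under the change-of-basepoint isomorphism of Lemma \ref{pointindependence} is again $G_n$, so $[\rho * \ell * \rho^{-1}] \notin G_n$. This furnishes, at the single basepoint $y^*$, loops of arbitrarily small diameter outside $N_n$ for every $n$.

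With the basepoint $y^*$ in hand I would invoke Lemma \ref{ascending} as set up above (taking $x = y^*$, and using Theorem \ref{closureprop} to see that $\pi_1(X, y^*)$ is closed), concluding that $\card(\pi_1(X,y^*)/\pi_1(X,y^*)) = 2^{\aleph_0}$. As this quotient is trivial, the desired contradiction follows, and no such strictly increasing sequence can exist.
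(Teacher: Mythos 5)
Your proposal is correct and follows essentially the same route as the paper: the same reduction to Lemma \ref{ascending} with $N_n = \bigcup G_n$ and $K = N = \pi_1(X)$, the same use of Theorem \ref{Shelahgen} together with the strictly increasing chain to rule out any $G_n$ being open, and the same appeal to Lemma \ref{openorsequence} for small loops avoiding each $G_n$. Your compactness-plus-convergent-subsequence argument for producing a single basepoint that works for all $n$ simultaneously is a correct and welcome expansion of the step the paper compresses into the phrase ``by picking an appropriate basepoint by local path connectedness.''
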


\begin{proof}  For each $n\in \omega$ let $\bigcup G_n = N_n$ in the notation of Lemma \ref{ascending}.  If $\pi_1(X)/G_n$ is finitely generated for some $n$, then the sequence $\{N_n\}_{n\in \omega}$ cannot be strictly increasing.  Then $\pi_1(X)/G_n$ must be uncountable for each $n$, so for each $n$ there exist arbitrarily small loops not in $G_n$ by the proof of Theorem \ref{unc}.  By picking an appropriate basepoint by local path connectedness, we are done by Lemma \ref{ascending}.
\end{proof}

The necessity of the conditions on $\Po$ can be easily seen by considering the decomposition $\displaystyle\pi_1(P^{\omega}, \overline{x}) = (\bigoplus_{\omega}\mathbb{Z}/2\mathbb{Z}) \oplus (\bigoplus_{2^{\aleph_0}}\mathbb{Z}/2\mathbb{Z})$ given in Example \ref{productofprojectiveplanes} and letting $\displaystyle G_n = (\bigoplus_{k\leq n}\mathbb{Z}/2\mathbb{Z}) \oplus (\bigoplus_{2^{\aleph_0}}\mathbb{Z}/2\mathbb{Z})$.

\begin{example}  The dual analog of Theorem \ref{ascendingnormal} does not hold: there exists a Peano continuum with an infinite strictly descending chain of analytic (in fact closed) normal subgroups whose intersection is the trivial subgroup.  We again use the space $P^{\omega}$ from Example \ref{productofprojectiveplanes}.  We change the superscript $\omega$ for $\mathbb{Q}$ and the group $\{0,1\}^{\mathbb{Q}}$ remains unchanged since the cardinalities of $\omega$ and $\mathbb{Q}$ are the same.  Given any subset $S\subseteq \mathbb{Q}$ the subgroup of $\pi_1(P^{\omega})$ corresponding to the subgroup $\{\alpha \in \{0,1\}^{\mathbb{Q}}: \alpha(q) = 1 \Rightarrow q\in S\} \leq \{0,1\}^{\mathbb{Q}}$ is closed.  For each $r\in \mathbb{R}$ let $G_r \leq \pi_1(P^{\omega})$ be the subgroup $\{\alpha \in \{0,1\}^{\mathbb{Q}}: \alpha(q) = 1 \Rightarrow q<r\}$.  Then each $G_r$ is a closed subgroup and the following hold:

\begin{enumerate}  \item  $r_n \nearrow r$ implies $\bigcup_{n\in \omega} G_{r_n} < G_r$

\item $r_n \searrow r$ implies $\bigcap_{n\in \omega} G_{r_n}  = G_r$

\item $\bigcap_{r\in \mathbb{R}}  G_r$ is the trivial subgroup
\end{enumerate}

Picking a sequence $r_n\searrow -\infty$ gives a strictly descending sequence of normal analytic subgroups $G_{r_n}$ as claimed.  The subgroup $\bigcup_{r\in \mathbb{R}} G_r$ cannot be equal to $\pi_1(X)$ (else we could pick any sequence $r_n \nearrow \infty$ and the ascending chain $G_{r_n}$ would contradict Theorem \ref{ascendingnormal}).  For example the sequence over $\mathbb{Q}$ which is constantly $1$ is not in $\bigcup_{r\in \mathbb{R}} G_r$.
\end{example}

We now address what can happen in the absence of local path connectedness.  Before stating the next theorem we quote the famous selection theorems of Silver and Burgess (from \cite{Si} and \cite{Bu} respectively).  A set $Y\subseteq Z$ in a Polish space is \textbf{coanalytic} if $Z\setminus Y$ is analytic.  The class of coanalytic sets is denoted $\Pi_1^1$.

\begin{theorem*}(J. Silver)  Suppose $E \subseteq Z\times Z$ is a coanalytic equivalence relation on a Polish space $Z$.  Then either there are $\leq \aleph_0$ many equivalence classes or there exists a homeomorph of a Cantor set $\mathcal{C} \subseteq Z$ such that for distinct $x,y \in \mathcal{C}$ we have $\neg xEy$. 
\end{theorem*}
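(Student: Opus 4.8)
The statement is J.\ Silver's dichotomy for coanalytic equivalence relations, a deep result of effective descriptive set theory; the plan below follows Harrington's streamlined proof via the Gandy--Harrington topology rather than Silver's original metamathematical argument through admissible ordinals. First I would relativize: by absorbing a real parameter coding a $\Pi_1^1$ definition of $E$ into the groundwork, it suffices to treat $E$ as a lightface $\Pi_1^1$ equivalence relation on $Z = \omega^\omega$, and to produce a \emph{perfect} set of pairwise $E$-inequivalent points (a perfect subset of a Polish space always contains a homeomorph of the Cantor set, which is exactly the object demanded).

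Next I would set up the Gandy--Harrington topology $\tau$ on $Z$, namely the topology having the $\Sigma_1^1$ sets as a basis (these are closed under finite intersection). The two facts I would import from effective theory are: (a) every nonempty $\Sigma_1^1$ set is $\tau$-open and $\tau$ refines the Polish topology; and (b) letting $G_1$ be the largest $\Sigma_1^1$ set containing no $\Delta_1^1$ element (equivalently, the set of non-hyperarithmetic reals), the space $(G_1,\tau)$ is strong Choquet, hence Baire. Fact (b), the Gandy--Harrington Baire category theorem, rests on the Gandy basis theorem and effective boundedness, and is the engine that forces fusion constructions to converge.

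The dichotomy would then split on whether $\neg E \cap (G_1 \times G_1)$ is empty. If it is, then any two non-hyperarithmetic reals are $E$-equivalent, so $G_1$ lies in a single class; since $Z \setminus G_1$ consists of the countably many $\Delta_1^1$ reals, $E$ has at most $\aleph_0$ classes and we land in the first horn. Otherwise I would build a Cantor scheme $\{B_s\}_{s \in 2^{<\omega}}$ of nonempty $\Sigma_1^1$ subsets of $G_1$ with $B_{s0}, B_{s1} \subseteq B_s$, with $\diam(B_s) \le 2^{-|s|}$ in the original metric (intersecting with small ordinary-open, hence $\Sigma_1^1$, balls), and with the crucial separation $B_{s0} \times B_{s1} \subseteq \neg E$. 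Running the construction so that along each branch the sequence $B_{\alpha|n}$ is a legal play in the strong Choquet game on $G_1$ forces $\bigcap_n B_{\alpha|n} \ne \emptyset$; the diameter condition collapses this to a single point $z_\alpha$, the map $\alpha \mapsto z_\alpha$ is a continuous injection of $2^\omega$, and the separation at the splitting node where $\alpha$ and $\beta$ first differ guarantees $\neg z_\alpha E z_\beta$ for $\alpha \ne \beta$. Its image is the desired Cantor set of pairwise inequivalent points.

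The main obstacle is the splitting step, and it is here that uncountability of the class set is really consumed: to keep the scheme going I must ensure that \emph{every} nonempty $\Sigma_1^1$ piece $B \subseteq G_1$ that arises still meets at least two classes and can be cut into two $\Sigma_1^1$ subpieces whose product avoids the $\Sigma_1^1$ set $\neg E$. Extracting such a \emph{rectangular} separation from a mere plane-$\Sigma_1^1$ witness requires more than a naive sectioning argument (which would only produce a $\Pi_1^1$ second factor) and draws on the separation/reduction properties available for $\Sigma_1^1$ sets on $G_1$; arranging that no piece degenerates into a single class is precisely what fails when there are only countably many classes. This splitting lemma, together with the Gandy--Harrington Baire category theorem, is the technical heart of the theorem, which is why we are content to cite \cite{Si} rather than reproduce the argument.
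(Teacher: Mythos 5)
The paper offers no proof of this statement at all---it is quoted verbatim from Silver's paper \cite{Si} as an imported tool---so your sketch can only be judged on its own merits, and on those merits it has a fatal structural error: the case split is wrong. You dichotomize on whether $\neg E \cap (G_1 \times G_1) = \emptyset$, where $G_1$ is the set of non-hyperarithmetic reals, a set defined independently of $E$. The second horn of that split does not imply the second horn of Silver's dichotomy. Take $E$ to be the clopen equivalence relation $x E y \Leftrightarrow x(0) \equiv y(0) \pmod 2$ on $\omega^\omega$: it has exactly two classes, yet $G_1$ certainly contains inequivalent pairs (prepend $0$ and $1$ to any non-hyperarithmetic real), so your construction is obliged to produce a perfect set of pairwise inequivalent points where not even three such points exist. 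The proof cannot be repaired by working harder at the fusion step, because the hypothesis ``$G_1$ meets two classes'' gives no control over the pieces: a nonempty $\Sigma^1_1$ set $B_s \subseteq G_1$ arising in your scheme may well lie inside a single $E$-class, and then no splitting $B_{s0} \times B_{s1} \subseteq \neg E$ exists. You gesture at this (``arranging that no piece degenerates into a single class'') but your case hypothesis is simply too weak to arrange it.

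Harrington's actual argument uses an $E$-dependent set in place of your $G_1$: let $H$ be the complement of the union of all lightface $\Sigma^1_1$ sets that are contained in a single $E$-class. The first horn of the dichotomy is ``$H = \emptyset$,'' which gives countably many classes because there are only countably many lightface $\Sigma^1_1$ sets, not because the exceptional set is countable as a set of points (your $Z \setminus G_1$ argument). The second horn, ``$H \neq \emptyset$,'' hands you exactly the invariant your fusion needs: by construction every nonempty lightface $\Sigma^1_1$ subset of $H$ meets at least two classes, so the scheme never gets stuck. Two further technical points are then unavoidable and absent from your sketch: one must show $H$ is itself $\Sigma^1_1$ (this uses $\Sigma^1_1$-reflection/Kreisel selection, since the definition of $H$ quantifies over codes of $\Sigma^1_1$ sets with a $\Pi^1_1$ side condition), and one must resolve the rectangle problem you correctly flag, namely that $\neg E$ is open in the Gandy--Harrington topology of the plane but the scheme needs rectangles $B_{s0} \times B_{s1}$, which is handled by a category argument ($E \cap (H \times H)$ is meager in the appropriate product topology) rather than by ``separation/reduction properties.'' As written, your proposal proves neither horn correctly and defers precisely the lemmas that constitute the theorem.
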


\begin{theorem*}(J. Burgess)  Suppose $E \subseteq Z\times Z$ is an analytic equivalence relation on a Polish space $Z$.  Then either there are $\leq \aleph_1$ many equivalence classes or there exists a homeomorph of a Cantor set $\mathcal{C} \subseteq Z$ such that for distinct $x,y \in \mathcal{C}$ we have $\neg xEy$. 
\end{theorem*}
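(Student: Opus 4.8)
The plan is to prove this analytic dichotomy by the effective, Gandy--Harrington method rather than by bootstrapping from Silver's theorem above; indeed the naive reduction to Silver provably fails, and seeing why it fails pinpoints where the bound $\aleph_1$ (as opposed to $\aleph_0$) comes from. First I would relativize: the statement concerns a boldface relation, so after absorbing a single real parameter I may assume $Z = \omega^{\omega}$ and that $E$ is lightface $\Sigma_1^1$. The structural input is the witness-rank filtration furnished by $\Sigma_1^1$-boundedness: writing $E = p[T]$ for a tree $T$ and stratifying the witnesses to $xEy$ by the countable ordinal at which they first appear, one obtains an increasing transfinite chain of \emph{Borel} equivalence relations $F_{\alpha}$ ($\alpha < \omega_1$) with $\bigcup_{\alpha<\omega_1} F_{\alpha} = E$. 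This already accounts heuristically for the $\aleph_1$: a class that is ``captured'' at some countable stage is coded by a countable ordinal, and there are only $\aleph_1$ such codes. It also shows why Silver cannot simply be applied stagewise. Since $E$ is the \emph{increasing} union of the $F_{\alpha}$, the partition it induces is coarser than each $F_{\alpha}$, so $F_{\alpha}$-inequivalence never implies $E$-inequivalence; to certify $\neg xEy$ one must separate $x$ from $y$ at \emph{all} of the cofinally many stages below $\omega_1$, and no countable Cantor scheme can discharge cofinally-many-of-$\omega_1$ separation demands along a single branch.

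The core of the argument is therefore a Gandy--Harrington dichotomy that does the separating uniformly. Let $\tau$ be the Gandy--Harrington topology on $\omega^{\omega}$, generated by the lightface $\Sigma_1^1$ sets, and let $\Omega = \{x : \omega_1^{x} = \omega_1^{CK}\}$ be the Gandy--Harrington space; $\Omega$ is a nonempty $\tau$-open $\Sigma_1^1$ set and $(\Omega, \tau)$ is strong Choquet, hence supports Baire-category and Cantor-scheme constructions exactly as in the effective proofs of the Silver and Harrington--Kechris--Louveau dichotomies (see \cite{Ke}). I would then split on whether $E$ can be locally separated off the diagonal by $\tau$-open sets. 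In the positive horn I build a Cantor scheme $(N_s)_{s \in 2^{<\omega}}$ of nonempty $\tau$-open subsets of $\Omega$ with $N_{s^{\frown}i} \subseteq N_s$, with the original-topology diameters tending to $0$, and with $(N_{s^{\frown}0} \times N_{s^{\frown}1}) \cap E = \emptyset$ at every split. For $z \in 2^{\omega}$ the intersection $\bigcap_n N_{z\restriction n}$ is a single point $x_z$ (completeness in the original metric), the map $z \mapsto x_z$ is a continuous injection, its image is the desired homeomorph $\mathcal{C}$ of the Cantor set, and for $z \neq z'$ the split at their first disagreement forces $\neg x_z E x_{z'}$.

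In the negative horn the scheme cannot even be launched: every nonempty $\tau$-open $U \subseteq \Omega$ contains a nonempty $\tau$-open $V$ with $V \times V \subseteq E$, i.e.\ an $E$-clique. Here each $E$-class meeting $\Omega$ is pinned to such a definable clique, which is coded by a countable ordinal, so $\Omega$ meets at most $\aleph_1$ classes; relativizing the entire analysis over each real then absorbs the remaining points (those with $\omega_1^{x} > \omega_1^{CK}$ fall into classes already accounted for by rerunning the argument relative to the appropriate real), and one concludes $\card(Z/E) \leq \aleph_1$. Undoing the initial relativization yields the theorem for the original boldface $E$.

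The hard part will be the dichotomy itself, and this is exactly the point at which the theorem outstrips Silver's. Concretely, the two technical nuclei are (a) verifying that $(\Omega, \tau)$ is strong Choquet, so that the Cantor scheme of $\tau$-open sets can be driven while keeping original-metric diameters small, and (b) proving the clean alternative of horns, that failure of local separation forces the $E$-clique structure and hence the $\aleph_1$ bound on classes; this is where the ordinal $\omega_1$, rather than $\omega$, enters essentially. As a contingency I would keep in reserve the route through the perfect-set property for $\aleph_1$-Suslin sets: an analytic $E$ is a fortiori $\Sigma_2^1$, hence $\aleph_1$-Suslin, and a Mansfield--Solovay style perfect-set theorem applied to the tree of $E$-discrete sequences would yield either a perfect set of pairwise inequivalent points or a confinement of the classes to at most $\aleph_1$ many; reconciling that tree-of-discrete-sequences formulation with the clean class count is the analogous obstacle there.
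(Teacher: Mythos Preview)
The paper does not prove this theorem. It is quoted verbatim, with attribution to Burgess and citation to \cite{Bu}, immediately before Theorem~\ref{CH}, and is then invoked as a black box: in the proof of Theorem~\ref{CH} part (2) one simply notes that the left-coset equivalence relation induced by an analytic subgroup is analytic (by Lemma~\ref{relations}) and applies Burgess's dichotomy to it. There is nothing further to compare against.

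Your proposal is therefore not a reconstruction of anything in the paper but an outline of an independent proof of the cited result. As such it is broadly in the spirit of the known effective proofs (Gandy--Harrington forcing, stratification of the analytic relation by countable witness-rank), and your diagnosis of why the bound is $\aleph_1$ rather than $\aleph_0$ is correct. That said, the ``negative horn'' as you have written it is not yet a proof: the step ``each $E$-class meeting $\Omega$ is pinned to such a definable clique, which is coded by a countable ordinal, so $\Omega$ meets at most $\aleph_1$ classes'' conflates several nontrivial reflection and boundedness facts, and the subsequent ``relativizing over each real absorbs the remaining points'' would, taken literally, produce $2^{\aleph_0}$ many parameters rather than $\aleph_1$. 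The clean route in the negative horn is the one you mention only as a contingency: write $E$ as an increasing $\omega_1$-union of Borel equivalence relations $E_\alpha$ (via $\Sigma^1_1$-boundedness applied to the tree representation), observe that if no perfect set of pairwise $E$-inequivalent points exists then for each $\alpha$ Silver gives only countably many $E_\alpha$-classes that are not split at later stages, and count. For the purposes of this paper, however, none of this is needed; the result is used off the shelf.
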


\begin{theorem}\label{CH}  Suppose $X$ is path connected Polish and $G \leq \pi_1(X,x)$.

\begin{enumerate}\item  If $G$ is coanalytic then the index $\pi_1(X,x):G$ is either $\leq \aleph_0$ or $2^{\aleph_0}$.

\item If $G$ is analytic then the index $\pi_1(X, x):G$ is either $\leq \aleph_1$ or $2^{\aleph_0}$.

\end{enumerate}

\end{theorem}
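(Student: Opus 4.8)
The plan is to translate the index $\pi_1(X,x):G$ into a count of equivalence classes on the loop space and then apply the selection theorems of Silver and Burgess verbatim. Since $X$ is path connected Polish, the loop space $L_x$ is Polish, hence separable, so $\card(L_x) \leq 2^{\aleph_0}$; this will supply the only upper bound we need.

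First I would pass to the coset relation. Define $E \subseteq L_x \times L_x$ by $l_0 E l_1$ if and only if $[l_0]G = [l_1]G$, which is exactly the relation $E$ of Lemma \ref{relations}. The map $(l_0, l_1) \mapsto (l_0)^{-1} * l_1$ appearing there is continuous, and $E$ is its preimage of $\bigcup G$. Because the pointclasses $\Sigma_1^1$ and $\Pi_1^1$ are each closed under continuous preimages, Lemma \ref{relations} gives that $E$ is coanalytic when $G$ is coanalytic and analytic when $G$ is analytic. The equivalence class of $l$ under $E$ is the set of all loops representing elements of the coset $[l]G$, so the classes of $E$ are in bijection with the left cosets of $G$; thus the number of $E$-classes equals the index $\pi_1(X,x):G$. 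No normality hypothesis is needed, since we are merely counting an equivalence relation.

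For part (1), with $G$ coanalytic, $E$ is a coanalytic equivalence relation on the Polish space $L_x$, and Silver's theorem applies. Either $E$ has at most $\aleph_0$ classes, giving index $\leq \aleph_0$; or there is a homeomorph $\mathcal{C}$ of the Cantor set whose distinct points are pairwise $E$-inequivalent, whence the number of classes is at least $\card(\mathcal{C}) = 2^{\aleph_0}$. Combined with $\card(L_x) \leq 2^{\aleph_0}$, the index is then exactly $2^{\aleph_0}$. Part (2) is the same argument with Burgess's theorem in place of Silver's: now $E$ is analytic, so either there are at most $\aleph_1$ classes, giving index $\leq \aleph_1$, or a Cantor set of pairwise inequivalent points again forces the index to equal $2^{\aleph_0}$.

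There is no genuine obstacle beyond this translation: once one checks that $E$ lies in the correct pointclass via Lemma \ref{relations} and that its classes count precisely the cosets, the two dichotomy theorems deliver the conclusions directly, with the separability of $L_x$ pinning the large case to exactly $2^{\aleph_0}$. The one place deserving a careful word is the verification that $E$ inherits the descriptive complexity of $G$, since this is what licenses the choice between Silver and Burgess; everything else is bookkeeping.
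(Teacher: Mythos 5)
Your proposal is correct and follows exactly the paper's own argument: apply Lemma \ref{relations} to see that the left-coset equivalence relation on $L_x$ inherits the pointclass of $G$, then invoke Silver's theorem in the coanalytic case and Burgess's theorem in the analytic case. The extra details you supply (the bijection between $E$-classes and cosets, and the bound $\card(L_x) \leq 2^{\aleph_0}$ pinning down the large case) are exactly the bookkeeping the paper leaves implicit.
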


\begin{proof}  If $G$ is coanalytic (repesctively analytic) then by Lemma \ref{relations} the equivalence relation induced by the left coset partition is coanalytic (respectively analytic).  Apply the theorem of Silver (resp. Burgess) to conclude (1) (resp. (2)).
\end{proof}

\begin{example}\label{goodexample}  We give an instructive example which demonstrates the sharpness of Theorem \ref{CH} as well as the sharpness of Theorems \ref{unc} and \ref{Shelahgen} in a different sense than that of Example \ref{productofprojectiveplanes}.  There exists a model $\mathcal{M}$ of set theory satisfying

\begin{enumerate}\item \textbf{ZFC}

\item $\neg$ \textbf{CH}

\item Any subset of $\{0,1\}^{\omega}$ of cardinality $\aleph_1$ is coanalytic.

\end{enumerate}

\noindent (see \cite{MaSo}).  We consider the space $F$ and the function $f:K \rightarrow L_{(0,0)}$ used in Example \ref{exampleF} within the model $\mathcal{M}$.  Let $S \subseteq K$ be such that $\card(K \setminus S) = \aleph_1$.  Then $S$ is analytic in $K$.  Then $f(S)$ is analytic in $L_{(0,0)}$.  Then $\langle\langle [f(S)]\rangle\rangle \leq \pi_1(F, (0,0))$ is analytic by Lemma \ref{normalclosure}.  The quotient $\pi_1(F, (0,0))/\langle\langle [f(S)]\rangle\rangle$ is isomorphic to a free group of rank $\aleph_1$ so that $\card(\pi_1(F, (0,0))/\langle\langle [f(S)]\rangle\rangle)= \aleph_1$.

This demonstrates that the case $\aleph_1$ in Theorem \ref{CH} (2) can obtain in the absence of \textbf{CH}.  It also shows that one cannot hope to extend Theorem \ref{CH} part (1) to a higher projective class (since any higher projective class also contains the analytic sets, see Section \ref{conclusion}).  This also shows that one cannot drop local path connectedness in Theorems \ref{unc} and \ref{Shelahgen} and obtain the same conclusion.

\end{example}

\end{section}

\begin{section}{Comonster Groups}\label{comonstergroups}

As an application of the above theory we give the following definition.

\begin{definition}  We say a group $G$ is \textbf{comonster} if for every finite subset $S \subseteq G$ we have $\langle\langle S \rangle\rangle \neq G$.  More generally $G$ is $\kappa$-\textbf{comonster} if for every $S\subseteq G$ with $S$ of cardinality $< \kappa$ we have $\langle\langle S \rangle\rangle \neq G$.
\end{definition}

Thus comonster groups are $\aleph_0$-comonster groups.  One easily sees that any abelian group of cardinality $\kappa> \aleph_0$ is $\kappa$-comonster.  Also, if $h: G\rightarrow H$ is an epimorphism with $H$ comonster (respectively $\kappa$-comonster), then $G$ is also comonster (resp. $\kappa$-comonster).

We assume still that $\Po$ is nice with BP.  We have the following:

\begin{theorem}\label{comonster}  Let $X$ be a Peano continuum and $N\unlhd \pi_1(X)$ of type $\Po$.  If $\pi_1(X)/N$ is comonster then $\pi_1(X)/N$ is $\aleph_1$-comonster.  In particular, if $\pi_1(X)$ is comonster, then $\pi_1(X)$ is $\aleph_1$-comonster.
\end{theorem}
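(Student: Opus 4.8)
The plan is to argue by contradiction, with Theorem \ref{ascendingnormal} as the engine. Suppose $\pi_1(X)/N$ is comonster but fails to be $\aleph_1$-comonster. Then there is a set $S \subseteq \pi_1(X)/N$ with $\card(S) \leq \aleph_0$ and $\langle\langle S \rangle\rangle = \pi_1(X)/N$. Since $\pi_1(X)/N$ is comonster, $S$ cannot be finite, so enumerate $S = \{s_0, s_1, s_2, \ldots\}$ and set $H_n = \langle\langle s_0, \ldots, s_n\rangle\rangle \unlhd \pi_1(X)/N$. As every element of a normal closure is a finite product of conjugates of the generators, each such element already lies in some $H_n$; hence $\bigcup_{n\in\omega} H_n = \langle\langle S\rangle\rangle = \pi_1(X)/N$, while each $H_n$ is a \emph{proper} subgroup by the comonster hypothesis.

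Next I would pull this chain back along the quotient map $q: \pi_1(X) \rightarrow \pi_1(X)/N$. Choosing a lift $t_i \in \pi_1(X)$ of each $s_i$, one checks directly that $G_n := q^{-1}(H_n) = N\cdot\langle\langle t_0, \ldots, t_n\rangle\rangle$, the normal closure in $\pi_1(X)$ of $N \cup \{t_0, \ldots, t_n\}$ (using that $N$ is normal). Each $G_n$ is normal, the chain $\{G_n\}$ is increasing, each $G_n$ is proper (since $q(G_n) = H_n \neq \pi_1(X)/N$), and $\bigcup_{n\in\omega} G_n = q^{-1}(\pi_1(X)/N) = \pi_1(X)$.

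The crucial point—and what I expect to be the main technical step—is to verify that each $G_n$ is of type $\Po$. Here I would invoke the closure properties of Theorem \ref{closureprop}: the finitely generated subgroup $\langle t_0, \ldots, t_n\rangle$ is countable, hence analytic by part (5) and therefore $\Po$ (analytic being the smallest nice pointclass); its normal closure $\langle\langle t_0, \ldots, t_n\rangle\rangle$ is then $\Po$ by part (6); and since $N$ is $\Po$ by hypothesis, the subgroup $\langle N \cup \langle\langle t_0, \ldots, t_n\rangle\rangle\rangle = G_n$ generated by these two $\Po$ subgroups is $\Po$ by part (4). Thus $\{G_n\}_{n\in\omega}$ is an increasing sequence of $\Po$ normal subgroups of $\pi_1(X)$, each proper, whose union is $\pi_1(X)$.

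Finally, because the union equals $\pi_1(X)$ while every term is proper, the inclusions $G_n \subseteq G_{n+1}$ must be strict for infinitely many $n$; passing to that subsequence yields a strictly increasing infinite sequence of $\Po$ normal subgroups of $\pi_1(X)$ whose union is still $\pi_1(X)$, contradicting Theorem \ref{ascendingnormal}. This establishes that $\pi_1(X)/N$ is $\aleph_1$-comonster. The final ``in particular'' assertion then follows by taking $N$ to be the trivial subgroup, which is analytic and hence $\Po$ by Theorem \ref{closureprop}(3).
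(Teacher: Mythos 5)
Your proof is correct and follows essentially the same route as the paper: both arguments build the increasing chain $G_n = \langle\langle N \cup \{t_0,\ldots,t_n\}\rangle\rangle$ of $\Po$ normal subgroups from a countable normal generating set, observe the chain is proper and exhausts $\pi_1(X)$, and extract a strictly increasing subsequence contradicting Theorem \ref{ascendingnormal}. Your verification that each $G_n$ is $\Po$ via Theorem \ref{closureprop} parts (4)--(6) simply spells out what the paper dismisses as ``easily seen.''
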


\begin{proof}  Suppose for contradiction that $\pi_1(X)/N$ is comonster but not $\aleph_1$-comonster.  Let $S = \{g_0, \ldots\}\subseteq \pi_1(X)$ be a countably infinite set such that $\langle\langle N\cup S \rangle \rangle= \pi_1(X)$.  The normal groups $G_n = \langle\langle N\cup \{g_0, \ldots, g_n\}\rangle \rangle$ are easily seen to be $\Po$ and $\bigcup_{n} G_n = \pi_1(X)$.  On the other hand the sequence $G_n$ cannot stabilize since $\pi_1(X)/N$ is comonster.  Thus one can pick a strictly increasing subsequence of normal $\Po$ subgroups whose union is $\pi_1(X)$, contradicting Theorem \ref{ascendingnormal}.
\end{proof}

\begin{example}\label{firsthawaiianearring}  Consider the Hawaiian earring $E=\bigcup_{n\in \omega} C((0, \frac{1}{n+2}),\frac{1}{n+2})$.  We have an epimorphism $h:\pi_1(E)\rightarrow \prod_{\omega}\mathbb{Z}$ given by letting the $n$-th coordinate of $h([l])$ count the number of times a loop traverses the $n$-th circle of the infinite wedge that defines $E$ in an oriented direction.  Then $\pi_1(E)$ is $2^{\aleph_0}$-comonster, since $\prod_{\omega}\mathbb{Z}$ is abelian of cardinality $2^{\aleph_0}$.
\end{example}

\begin{example}  If $X$ is a one-dimensional Peano continuum with $\pi_1(X)$ uncountable, then $X$ retracts to a subspace that is homeomorphic to $E$, so that again $\pi_1(X)$ is $2^{\aleph_0}$-comonster.
\end{example}

Even if $\pi_1(X)$ is uncountable, it may still be the case that $\pi_1(X)$ is not comonster, as illustrated in the following example.

\begin{example}  Let $Y$ be a Peano continuum with $\pi_1(Y) \simeq A_5$.  Such a $Y$ exists by taking a finite presentation for $A_5$ and constructing the finite 2-dimensional CW complex by letting loops correspond to generators in the presentation and gluing on the boundary of a disc along a path that gives the relators.  Such a space is compact, metrizable, path connected and locally path connected.  Thus such a $Y$ is a Peano continuum, and so is $X = \prod_{\omega} Y$.  We have $\pi_1(X) \simeq \prod_{\omega} A_5$.  Letting $g \in \prod_{\omega} A_5$ have every entry be the $3$-cycle $(123)$, we claim that $\langle\langle g\rangle\rangle = \prod_{\omega} A_5$.  This demonstrates that $\pi_1(X)$ is not comonster.

To see that $\langle\langle g\rangle\rangle = \prod_{\omega} A_5$, notice that all $3$-cycles are conjugate (in $A_5$) to each other.  Thus for each $h\in \prod_{\omega} A_5$ whose each entry is a $3$-cycle we have $h \in \langle\langle g\rangle\rangle$.  Each $3$-cycle is a product of two $3$ cycles (if $(abc)$ is a $3$-cycle then $(abc) = (abc)^{-1}(abc)^{-1} = (cba)(cba)$).  Since the trivial element in $A_5$ is a product of two three cycles and each $5$-cycle and each product of two disjoint transpositions $(ab)(cd)$ is a product of two $3$-cycles then in fact every element in $\prod_{\omega} A_5$ is a product of exactly two conjugates of $g$ and we are done.
\end{example}

In all the above examples of Peano continua with comonster fundamental group, we used the fact that if the abelianization is uncountable then the fundamental group is comonster.

\begin{question}  Does there exist a Peano continuum whose first homology is trivial and whose fundamental group is comonster?
\end{question}

A negative answer would be very interesting as it would imply a theorem for finitely presented perfect groups (groups with trivial abelianization).

\begin{theorem}\label{finitelypresentedgroups}  Suppose the answer to the above question is no.  Let $\mathcal{P}_n$ be the class of groups whose elements are products of $n$ or fewer commutators.  For each $n\in \mathbb{N}$ there exists $k(n) \in \mathbb{N}$ such that if $G\in \mathcal{P}_n$ is finitely presented there exists a set $S \subseteq G$ with $\card(S) \leq k(n)$ and $\prod_{k(n)}S^{G} = G$ (each element of $G$ is a product of $k(n)$ or fewer conjugates of elements of $S$).

\end{theorem}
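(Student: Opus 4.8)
The plan is to argue by contraposition: assuming that for some fixed $n$ no uniform bound $k(n)$ works, I will assemble a single Peano continuum $X$ whose first homology is trivial and whose fundamental group is comonster. Since we are assuming the answer to the Question is no, no such continuum can exist, and this contradiction forces the existence of $k(n)$.

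Negating the conclusion for a fixed $n$ gives, for every $k\in\mathbb{N}$, a finitely presented group $G_k\in\mathcal{P}_n$ such that for every $S\subseteq G_k$ with $\card(S)\leq k$ the set of products of $k$ or fewer conjugates of elements of $S$ (and their inverses) fails to exhaust $G_k$. I would realize each $G_k$ as the fundamental group of its finite presentation $2$-complex $Y_k$; as observed in this section for $A_5$, such a complex is a Peano continuum with $\pi_1(Y_k)\simeq G_k$. Set $X=\prod_{k}Y_k$. Exactly as for $P^{\omega}$ in Example \ref{productofprojectiveplanes}, $X$ is a Peano continuum (a countable product of Peano continua is compact, metrizable, path connected, and locally path connected), and by functoriality $\pi_1(X)\simeq\prod_k G_k$.

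For the comonster property, fix a finite $S\subseteq\prod_k G_k$, replace it by $S\cup S^{-1}$, and let $S_k$ denote its $k$-th coordinate set, so $\card(S_k)\leq r$ for some fixed $r$. For each $k\geq r$ the hardness of $G_k$ (applied to $S_k$, which has $\card(S_k)\leq r\leq k$) yields an element $a_k\in G_k$ that is not a product of $k$ or fewer conjugates of elements of $S_k$; set $a=(a_k)_k$, taking $a_k$ trivial for $k<r$. If $a\in\langle\langle S\rangle\rangle$, then $a$ is a product of some finite number $m$ of conjugates of elements of $S$, so its $k$-th coordinate $a_k$ is a product of $\leq m$ conjugates of elements of $S_k$; choosing $k=\max(r,m)$ makes $a_k$ a product of $\leq k$ such conjugates, contradicting the choice of $a_k$. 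Hence $\langle\langle S\rangle\rangle\neq\pi_1(X)$ for every finite $S$, so $\pi_1(X)$ is comonster.

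Finally, I must show $H_1(X)=0$. By Hurewicz $H_1(X)\simeq\pi_1(X)^{\mathrm{ab}}=(\prod_k G_k)^{\mathrm{ab}}$, so it suffices to see that $\prod_k G_k$ is perfect. This is exactly where the single exponent $n$ defining $\mathcal{P}_n$ is essential and is the main obstacle of the argument: each $g_k\in G_k$ is a product of $\leq n$ commutators $\prod_{i\leq n}[a_{k,i},b_{k,i}]$, and assembling entries coordinatewise as $A_i=(a_{k,i})_k$ and $B_i=(b_{k,i})_k$ gives $(g_k)_k=\prod_{i\leq n}[A_i,B_i]$, so every element of $\prod_k G_k$ is a product of $\leq n$ commutators. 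Without a uniform commutator width the infinite product need not be perfect and $H_1(X)$ could be nonzero, so the uniform bound $n$ is precisely what makes the construction go through. With $H_1(X)=0$ and $\pi_1(X)$ comonster, the space $X$ contradicts the hypothesis that no Peano continuum has these two properties, which completes the proof.
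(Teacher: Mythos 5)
Your proof is correct and takes essentially the same route as the paper: select, for each $k$, a finitely presented counterexample group $G_k\in\mathcal{P}_n$, realize each as the fundamental group of a finite presentation $2$-complex, and take the countable product to obtain a Peano continuum whose fundamental group lies in $\mathcal{P}_n$ (hence is perfect, giving trivial first homology) and is comonster, contradicting the hypothesis that the answer to the Question is no. The only difference is that you carefully spell out the coordinatewise verifications of the comonster property and the uniform commutator width, which the paper compresses into the single remark that these facts are ``easy to see.''
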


\begin{proof}  Suppose for contradiction that for some $n\in \mathbb{N}$ there is no such $k(n)$.  Select finitely presented groups $G_m \in \mathcal{P}_n$ such that for any $S\subseteq G_m$ with $\card(S) \leq m$ we have that $\prod_{m}S^{G_m} \neq G_m$.  For each $m$ there is a finite CW complex $Y_m$ of dimension at most two whose fundamental group is isomorphic to $G_m$.  Each such $Y_m$ is a Peano continuum.  Then $\prod_{m} Y_m$ is a Peano continuum, with fundamental group isomorphic to $\prod_m G_m$.  It is easy to see that $\prod_m G_m \in \mathcal{P}_n$ and is also comonster.
\end{proof}

This is adjacent to a question of Wiegold: Does every finitely generated perfect group contain an element which normally generates the group?

\end{section}

\begin{section}{Examples of Topologically Defined Subgroups}\label{Someexamples}

We give some standard examples, and introduce some new examples, of subgroups of the fundamental group which are topologically defined.  These are intended to illustrate the richness of the theory and give a grab bag of examples to which to apply the theorems.

\begin{subsection}{The Shape Kernel}

One well known subgroup of the fundamental group is the shape kernel.  We discuss this subgroup by first giving preliminary definitions towards defining the shape group and the shape kernel and then prove that the shape kernel is a closed subgroup.

We assume some familiarity with geometric simplicial complexes.  Given a topological space $X$ and a open cover $\mathcal{U}$
of $X$ let $N(\mathcal{U})$ denote the \textbf{nerve} of the cover- that is, the geometric simplicial complex which has a distinct vertex $v_U$ for every $U\in \mathcal{U}$ and which contains the $n$-simplex $[v_{U_0}, v_{U_1}, \ldots, v_{U_n}]$ if and only if $U_0\cap U_1\cap \cdots \cap U_n \neq \emptyset$.  If $\mathcal{V}$ is an open cover of $X$ that refines $\mathcal{U}$ (i.e. for each $V\in \mathcal{V}$ there is a $U\in \mathcal{U}$ such that $V\subseteq U$) then any map from the vertices of $N(\mathcal{V})$ to the vertices of $N(\mathcal{U})$ such that $v_V \mapsto v_U$ implies $V \subseteq U$ extends to a simplicial map from $N(\mathcal{V})$ to $N(\mathcal{U})$.

If the topological space has a distinguished basepoint $x$, then one can distinguish an element $U$ in an open cover $\mathcal{U}$ such that $x\in U$, which in turn gives a distinguished vertex in the nerve $N(\mathcal{U})$.  With this added structure, if $\mathcal{V}$ refines $\mathcal{U}$ with distinguished elements $V$ and $U$ such that $V \subseteq U$ then a simplicial map as  described above extending a vertex assignment satisfying $v_V \rightarrow v_U$, say $p_{(\mathcal{V}, V), (\mathcal{U}, U)}: (N(\mathcal{V}), v_V) \rightarrow (N(\mathcal{U}), v_U)$ preserves basepoint and is unique up to basepoint preserving homotopy.  Assuming $X$ is path connected all nerves are connected, and the refinement relation on open covers gives an inverse directed system $(\pi_1(N(\mathcal{U}), v_U), p_{(\mathcal{V}, V), (\mathcal{U}, U)}*)$.  The shape group of $X$ is defined as the inverse limit $$\check{\pi}_1(X,x) =\displaystyle \lim_{\leftarrow}(\pi_1(N(\mathcal{U}), v_U), p_{(\mathcal{V}, V), (\mathcal{U}, U)}*)$$

The index of the inverse limit will generally be of uncountable cardinality.  Assuming $(X, x)$ is also paracompact and Hausdorff we have for each open cover with distinguished neighborhood of $x$, $(\mathcal{U}, U)$, a refinement $(\mathcal{V}, V)$ such that $V$ is the unique element of $\mathcal{V}$ containing $x$.  A partition of unity subordinate to $\mathcal{V}$ (which necessarily exists by our assumption of paracompactness and Hausdorffness) induces a barycentric map $f_{\mathcal{U}}:(X, x) \rightarrow (N(\mathcal{V}), v_V)$ which is unique up to based homotopy.  The induced map $f_{\mathcal{U}*}$ on the fundamental group $\pi_1(X, x)$ can be checked to commute with the maps of the inverse system in the appropriate way, and since the set of all such open covers $\mathcal{V}$ described are cofinal in the inverse system we get a map $\Psi: \pi_1(X, x) \rightarrow \check{\pi}_1(X, x)$.

The natural object used to assess the loss of information when passing from the fundamental to the shape group is the shape kernel $\ker(\Psi)$.  The following demonstrates an alternative characterizaion of the shape kernel.

\begin{theorem}\label{shapekernel}  Suppose $(X, x)$ is a path connected metrizable space.  Then the shape kernel is a closed normal subgroup of $\pi_1(X, x)$.  If in addition $X$ is locally path connected then the shape kernel is equal to the following two subgroups:

\begin{enumerate}

\item $\displaystyle\bigcap_{f} \ker(f_*)$ where $f$ is taken over all continuous maps to semilocally simply connected spaces 

\item $\displaystyle \bigcap_{\text{G open, normal}} G$

\end{enumerate}

\end{theorem}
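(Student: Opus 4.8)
The plan is to work throughout with the continuous barycentric maps $f_{\mathcal{U}}\colon (X,x)\to (N(\mathcal{V}),v_V)$ into the nerves, giving each nerve its metric topology, so that $N(\mathcal{V})$ is a metrizable, locally path connected, and semilocally simply connected space. Since the covers $\mathcal{V}$ having a unique element meeting $x$ are cofinal in the inverse system defining $\check{\pi}_1(X,x)$, an element $[l]$ lies in the shape kernel $\ker(\Psi)$ if and only if $f_{\mathcal{U}*}[l]=1$ for every such cover; that is,
\[
\ker(\Psi)=\bigcap_{\mathcal{U}}\ker(f_{\mathcal{U}*}).
\]
Being a kernel, $\ker(\Psi)$ is automatically normal, so the first assertion reduces to closedness.

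For closedness, where $X$ is merely path connected metrizable, I would first note that each nerve in its metric topology is metrizable, locally path connected, and semilocally simply connected, so by Proposition \ref{slsc} the trivial subgroup of $\pi_1(N(\mathcal{V}),v_V)$ is open; equivalently, the set of nullhomotopic loops is open in the loop space of $N(\mathcal{V})$. The map $\overline{f_{\mathcal{U}}}\colon L_x\to L_{v_V}(N(\mathcal{V}))$ sending $l\mapsto f_{\mathcal{U}}\circ l$ is continuous, and $\bigcup\ker(f_{\mathcal{U}*})$ is exactly the $\overline{f_{\mathcal{U}}}$-preimage of the nullhomotopic loops. A continuous preimage of an open set is open, so each $\ker(f_{\mathcal{U}*})$ is open, hence closed by Lemma \ref{openclosed}. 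Thus $\ker(\Psi)$ is an intersection of closed subgroups and is closed.

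Now assume $X$ is locally path connected. For both equalities I would route through the Spanier subgroup of a cover: for an open cover $\mathcal{V}$ by path connected sets let $\pi(\mathcal{V},x)$ denote the normal subgroup generated by the classes of loops lying inside a single element of $\mathcal{V}$. Each such loop maps under $f_{\mathcal{V}}$ into the contractible open star of $v_V$, so $\pi(\mathcal{V},x)\subseteq\ker(f_{\mathcal{V}*})$. The key technical input is the reverse inclusion up to refinement: if $\mathcal{W}$ is a path connected star-refinement of $\mathcal{U}$ (which exists, as metric spaces are fully normal and $X$ is locally path connected) then $\ker(f_{\mathcal{W}*})\subseteq\pi(\mathcal{U},x)$. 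This is the standard nerve computation, and I expect it to be the main obstacle and the only genuinely combinatorial step: a combinatorial nullhomotopy of $f_{\mathcal{W}}\circ l$ is a finite sequence of elementary moves across $2$-simplices of $N(\mathcal{W})$, and each move alters $l$ by a loop supported in the union of three mutually intersecting elements of $\mathcal{W}$, hence inside one element of $\mathcal{U}$ by the star-refinement property. Finally, by Lemma \ref{opencover} each $\pi(\mathcal{V},x)$ is open, and by Lemma \ref{opencoverforopen} with Lemma \ref{supergroup} the open normal subgroups are precisely the normal subgroups containing some $\pi(\mathcal{V},x)$.

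Parts (2) and (3) then follow formally. Combining $\pi(\mathcal{V},x)\subseteq\ker(f_{\mathcal{V}*})$ with $\ker(f_{\mathcal{W}*})\subseteq\pi(\mathcal{U},x)$ for star-refinements yields $\ker(\Psi)=\bigcap_{\mathcal{V}}\pi(\mathcal{V},x)$, and since the open normal subgroups are exactly those containing some $\pi(\mathcal{V},x)$ this common value equals $\bigcap_{G\ \mathrm{open,\ normal}}G$, giving part (3); the inclusion $\bigcap_{G}G\subseteq\ker(\Psi)$ also follows at once from each $\ker(f_{\mathcal{U}*})$ being itself open and normal. For part (2), since each nerve is semilocally simply connected the maps $f_{\mathcal{U}}$ are among the $f$ indexing the intersection, so $\bigcap_{f}\ker(f_*)\subseteq\ker(\Psi)$; conversely, for continuous $f\colon X\to Y$ with $Y$ semilocally simply connected, pull back a witnessing cover of $Y$ and refine it to a path connected cover $\mathcal{U}_f$ of $X$, so that $\pi(\mathcal{U}_f,x)\subseteq\ker(f_*)$, and for a path connected star-refinement $\mathcal{W}$ of $\mathcal{U}_f$ one gets $\ker(\Psi)\subseteq\ker(f_{\mathcal{W}*})\subseteq\pi(\mathcal{U}_f,x)\subseteq\ker(f_*)$. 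Intersecting over all such $f$ completes part (2).
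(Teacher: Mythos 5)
Your proposal is correct in substance, but it takes a genuinely different route from the paper, and the comparison is instructive. The paper proves a circular chain of inclusions: shape kernel $\supseteq$ (1) $\supseteq$ (2) $\supseteq$ shape kernel. The first two inclusions are soft (nerves are semilocally simply connected; $\ker(f_*)$ is open normal for any continuous $f$ to a semilocally simply connected space by Lemma \ref{opencover}), and all the work goes into the last one: for an open normal $G$ the paper builds by hand a tower of metric refinements $\mathcal{U}_0,\ldots,\mathcal{U}_4,\mathcal{U}$ (in effect the star-refinements you invoke via full normality), a barycentric map $b$, a realization map from the $1$-skeleton $N(\mathcal{U})^1$ back into $X$, and then closes the argument by quoting the Cannon--Conner theorem on $2$-set simple covers ($\mathcal{V}$-equivalent loops have equal image under $q$), concluding $\ker(b_*)\leq G$. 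Your pivot is instead the identity $\ker(\Psi)=\bigcap_{\mathcal{V}}\pi(\mathcal{V},x)$, i.e.\ you prove along the way that the shape kernel equals the Spanier group --- which in the paper is a separate, later result (Theorem \ref{Spaniertheorem}(1), the Brazas--Fabel theorem, deduced there as a corollary of this theorem and Lemma \ref{Spanierlemma0}) --- and both itemized equalities then follow formally from your characterization of the open normal subgroups as the normal subgroups containing some $\pi(\mathcal{V},x)$. Your closedness argument is also cleaner than the paper's: the paper uses an ad hoc $\epsilon$-argument that nearby loops in a simplicial complex have homotopic images, while you reuse Proposition \ref{slsc} on the nerve and pull back the open set of nullhomotopic loops through the continuous map $l\mapsto f_{\mathcal{U}}\circ l$; the price is that you need the (true, standard, but worth recording) facts that a nerve in the metric topology is metrizable, locally path connected and semilocally simply connected.

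Two caveats. First, your ``key technical input,'' $\ker(f_{\mathcal{W}*})\subseteq\pi(\mathcal{U},x)$ for a path connected star-refinement $\mathcal{W}$ of $\mathcal{U}$, is precisely the hard content of the paper's proof, and your one-sentence sketch compresses the two pieces that occupy most of the paper's page: realizing combinatorial edge-paths of $N(\mathcal{W})$ as honest loops in $X$ (the paper's map $f\colon N(\mathcal{U})^1\to X$, which requires choosing points in elements and connecting paths inside unions of intersecting elements), and showing that $l$ itself agrees, modulo $\pi(\mathcal{U},x)$, with the realization of its edge-path approximation. The elementary moves act on combinatorial loops in the nerve, not on $l$; transferring them to $X$ is exactly where the paper leans on Cannon--Conner, so in a full write-up this step must either be carried out or outsourced to a reference such as \cite{BF} or \cite{CC}. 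The star-refinement property does make every transferred loop land in a single element of $\mathcal{U}$, so the claim is true; it just is not yet proved. Second, a small slip: a loop lying in a single element $V$ maps under $f_{\mathcal{V}}$ into the \emph{closed} star of $v_V$, not necessarily the open star, since the partition-of-unity function attached to $V$ may vanish at points of $V$; the closed star is still contractible (it is star-shaped about $v_V$), so your inclusion $\pi(\mathcal{V},x)\subseteq\ker(f_{\mathcal{V}*})$ survives with that correction.
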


\begin{proof}  It is clear from the definition that the shape kernel is equal to $\bigcap_{f} \ker(f_*)$ where $f$ is taken over all baricentric maps of open covers.  Fix a barycentric map $f$ to the nerve $N(\mathcal{V})$.  As $N(\mathcal{V})$ is a geometric simplicial complex it is not difficult for each loop $l\in L_x$ to select $\epsilon >0$ such that for each $l'\in L_x$ that is $\epsilon$ close to $l$ we have $f\circ l$ is homotopic to $f\circ l'$ in $N(\mathcal{V})$.  This shows that $\ker(f_*)$ is open, and therefore also closed by Lemma \ref{openclosed}.  Then the shape kernel is a closed subgroup as an intersection of closed subgroups.

Now suppose that $X$ is also locally path connected.  Since each nerve is a geometric simplicial complex, each nerve is also semilocally simply connected.  Thus the shape kernel contains the subgroup (1).  Furthermore, if $f:X \rightarrow Y$ is continuous with $Y$ semilocally simply connected, then we can find an open cover $\mathcal{U}$ of $X$ such that the image of any loop in an element of $\mathcal{U}$ has nulhomotpic image under the map $f$.  This gives an open cover satisfying the criteria of Lemma \ref{opencover} and since $X$ is locally path connected we have that $\ker(f_*)$ is open.  Thus subgroup (1) contains subgroup (2).  

We conclude by proving that subgroup (2) contains the shape kernel.  Let $G$ be an open normal subgroup in $\pi_1(X)$ (since $G$ is open, normal we may consider $\pi_1(X, x)$ as basepoint free by Lemma \ref{pointindependence}).  Let $q: \pi_1(X) \rightarrow \pi_1(X)/G$ be the canonical quotient homomorphism.  We introduce some terms and a theorem given in \cite{CC}.

Recall that if $\phi: \pi_1(Y) \rightarrow H$ is a group homomorphism we say an open cover $\mathcal{V}$ of $Y$ by path connected sets is \textbf{2-set simple rel $\phi$} provided any loop whose image lies in the union of two elements of $\mathcal{V}$ is in $\ker(\phi)$ (as defined in the proof of Theorem \ref{Shelahgen}).  Two paths $p_0$ and $p_1$ are \textbf{$\mathcal{V}$-related} if there is some parametrization for $p_0$ and $p_1$ such that for all $s\in [0,1]$  the points $p_0(s)$ and $p_1(s)$ are in a common element of $\mathcal{V}$.  To be $\mathcal{V}$-related is not necessarily an equivalence relation; we say that paths $p_0$ and $p_1$ are \textbf{$\mathcal{V}$-equivalent} if they are in the same class under the equivalence class generated by $\mathcal{V}$-relatedness.  The following is part (1) of Theorem 7.3 in \cite{CC}:

\begin{theorem*}  Let $Y$ be a path connected topological space, $\phi:\pi_1(Y) \rightarrow H$ a homomorphism and $\mathcal{V}$ a 2-set simple cover of $Y$ rel $\phi$.  If two loops $l, l' \in L_y$ are $\mathcal{V}$-equivalent then $\phi([l]) = \phi([l'])$.

\end{theorem*}

By Lemma \ref{opencoverforopen} we have an open cover $\mathcal{U}_0$ of $X$ such that each loop in an element of $\mathcal{U}_0$ is in $G$.  For each $z\in X$ we may select $V_z\in \mathcal{U}_0$ satisfying $z\in V_z$.  Define $r_0(z) = d(z, X-V_z)$.  Letting $\mathcal{U}_1 = \{B(z, \frac{r_0(z)}{3})\}_{z\in X}$ it is straightforward to check that if for $U, U'\in \mathcal{U}_1$ we have $U \cap U' \neq \emptyset$ then $U \cup U'$ is contained in an element of $\mathcal{U}_0$.  By local path connectedness we let $\mathcal{U}_2$ be a refinement of $\mathcal{U}_1$ by path connected open sets.  It is easy to see that $\mathcal{U}_2$ is a 2-set simple cover rel $q$.  For each $z\in X$ pick a $W_z\in \mathcal{U}_2$ such that $z\in W_z$ and let $r_2(z) = d(z, X-W_z)$.  Letting $\mathcal{U}_3 = \{B(z, \frac{r_2(z)}{5})\}_{z\in X}$ one can check that if $U, U', U''\in \mathcal{U}_3$ satisfy $U\cap U' \neq \emptyset$ and $U'\cap U'' \neq \emptyset$ then $U\cup U'\cup U''$ is contained entirely in an element of $\mathcal{U}_2$.  Let $\mathcal{U}_4$ be a refinement of $\mathcal{U}_3$ by path connected open sets.  Finally for each $z\in X$ select a $U_z\in \mathcal{U}_4$ such that $z\in U_z$, let $r_4(z) = d(z, X-U_z)$ and $\mathcal{U} = \{B(z, \frac{r_4(z)}{3})\}_{z\in X}$.  Again, it is straightforward to see that if $U, U'\in \mathcal{U}$ satisfy $U\cap U' \neq \emptyset$ then $U\cup U'$ is entirely contained in an element of $\mathcal{U}_4$.  Without loss of generality we can assume $\mathcal{U}$ is refined so that $x$ is contained in exactly one element of the cover $\mathcal{U}$.

Let $b: X \rightarrow N(\mathcal{U})$ be a barycentric map associated to some partition of unity subordinated to $\mathcal{U}$.  Then $b(x) = v_U$ where $U\in \mathcal{U}$ is unique such that $x\in U$.  We define a map $f$ from the 1-skeleton $N(\mathcal{U})^1$ to $X$.  Let $f(v_U) = x$ and for all other vertices $v_{U'}\in N(\mathcal{U})^0$ simply let $f(v_{U'}) \in U'$.  By our choice of $\mathcal{U}$ if $[v_{U'}, v_{U''}]$ is a $1$-simplex in $N(\mathcal{U})$ then $U' \cap U'' \neq \emptyset$ and so there exists a path contained entirely in an element of $\mathcal{U}_4$ from $f(v_{U'})$ to $f(v_{U''})$.  Let $f|[v_{U'}, v_{U''}]$ map via this path.

We will be done if we show that $\ker(b_*) \leq G$.  Suppose now that $l\in L_x$ is such that $[l]\in \ker(b_*)$.  Then $b\circ l$ is a loop in $N(\mathcal{U})$ based at $v_U$ which is nulhomotopic.  Recall that $b$ has the property that $b^{-1}(\Star v_{U'}) \subseteq U'$ where $\Star v_{U'}$ is the open star of the vertex $v_{U'}$.  There exists a combinatorial loop $p(v_{U}, v_{U_1}, v_{U_2}, \ldots, v_{U_{n-1}}, v_{U_n} = v_{U})$ which is homotopic in $N(\mathcal{U})$ to $b\circ l$ such that $b\circ l(s) \in \Star v_{U_k}$ when $s\in [\frac{k}{n}, \frac{k+1}{n}]$.  Letting $l_0:[0,1] \rightarrow N(\mathcal{U})$ be a topological realization of this loop we see that $l$ is $\mathcal{U}_{2}$-related to $f\circ l_0$.

By assumption there exists a nulhomotopy of $l_0$, and so in particular there exists a combinatorial nulhomotopy of $p(v_{U}, v_{U_1}, v_{U_2}, \ldots, v_{U_{n-1}}, v_{U_n} = v_{U})$.  In other words, there exists a finite sequence of combinatorial paths:

\begin{center}  $p_0 = p(v_{U}, v_{U_1}, v_{U_2}, \ldots, v_{U_{n-1}}, v_{U_n} = v_{U})$

$p_1 = p(v_U, v_{U_{1,1}},v_{U_{1, 2}}, \ldots, v_{U_{1, n_1}} = v_U)$

$p_2 = p(v_U, v_{U_{2, 1}}, v_{U_{2, 2}}, \ldots, v_{U_{2, n_2}} = v_U)$

$\vdots$

$p_{m}=p(v_U)$

\end{center}

such that one obtains $p_k$ from $p_{k-1}$ by performing one of the following elementary path homotopies:

\begin{enumerate}  \item Exchanging the subpath $v_{U_p}, v_{U_{p+1}}$ for the subpath $v_{U_p}$ assuming $U_p = U_{p+1}$, or vice versa.

\item Exchanging the subpath $v_{U_p}, v_{U_{p+1}}, v_{U_{p+2}}$ for the subpath $v_{U_p}$ assuming $U_{p+2} = U_p$, or vice versa.

\item Exchanging the subpath $v_{U_p}, v_{U_{p+1}}, v_{U_{p+2}}$ for the subpath $v_{U_p}, v_{U_{p+2}}$ assuming $[v_{U_p}, v_{U_{p+1}}, v_{U_{p+2}}]$ is a $2$-simplex in $N(\mathcal{U})$, or vice versa.
\end{enumerate}

Letting $l_k:[0,1] \rightarrow N(\mathcal{U})$ be a topological realization of the combinatorial path $p_k$, it is easy to see that $f\circ l_k$ is $\mathcal{U}_{2}$ related to $f\circ l_{k+1}$.  By the theorem of Cannon and Conner quoted above, we have that $q([l]) = q([f\circ l_0]) = q([f\circ l_1]) = \cdots = q([f\circ l_m]) = q(1)$, and so $[l]\in G$.
\end{proof}

As a direct consequence of Theorem \ref{CH} we get the following: if $X$ is a Polish space then the quotient of $\pi_1(X)$ by the shape kernel is of cardinality $\leq \aleph_0$ or $2^{\aleph_0}$.

\end{subsection}

\begin{subsection}{The Spanier Group}\label{Spanier}  Another useful subgroup of the fundamental group is the Spanier group, which we denote $\pi_1^s(X, x)$ (first defined in \cite{Sp}).  We give the necessary definitions for this group, then give some results about the topological properties.

Let $X$ be a path connected topological space and $x\in X$.  If $\mathcal{U}$ is a collection of open subsets of $X$ we define $\pi_1(\mathcal{U}, x)$ to be the subgroup of $\pi_1(X, x)$ generated by loops of the form $\rho * l * \rho^{-1}$ where $\rho(0) = x$ and $l$ is a loop based at $\rho(1)$ and contained in some element of $\mathcal{U}$.  This subgroup is easily seen to be normal.  The Spanier group is defined to be $\pi_1^s(X, x) = \bigcap_{\mathcal{U}}\pi_1(\mathcal{U}, x)$ where the parameter $\mathcal{U}$ is taken over all open covers.  The first of the following two lemmas does not assume metrizability of $X$.  It is proven in \cite{FZ} as Proposition 4.8.  We provide our own proof for completeness.

\begin{lemma}\label{Spanierlemma0}  $\pi_1^s(X, x)$ is contained in the shape kernel.
\end{lemma}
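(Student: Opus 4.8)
The plan is to exploit the characterization of the shape kernel established in the proof of Theorem~\ref{shapekernel}, namely that it equals $\bigcap_f \ker(f_*)$ as $f$ ranges over all barycentric maps of open covers. Since $\pi_1^s(X,x)=\bigcap_{\mathcal{U}}\pi_1(\mathcal{U},x)$ is an intersection over \emph{all} open covers, it is contained in $\pi_1(\mathcal{U},x)$ for any single open cover $\mathcal{U}$. Thus it suffices, for each fixed barycentric map $f\colon (X,x)\to (N(\mathcal{V}),v_V)$, to exhibit one open cover $\mathcal{U}$ with $\pi_1(\mathcal{U},x)\subseteq \ker(f_*)$; intersecting over all $f$ then yields $\pi_1^s(X,x)\subseteq\bigcap_f\ker(f_*)$, the shape kernel.

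First I would fix such an $f$ and set $\mathcal{U}=\{f^{-1}(\Star v_W)\}_{W\in\mathcal{V}}$. Each $f^{-1}(\Star v_W)$ is open, and since every point of the geometric simplicial complex $N(\mathcal{V})$ lies in the open star of each vertex of its carrier simplex, the open stars cover $N(\mathcal{V})$ and hence $\mathcal{U}$ covers $X$. The key step is then the claim that $\pi_1(\mathcal{U},x)\subseteq\ker(f_*)$.

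To prove the claim, recall that $\pi_1(\mathcal{U},x)$ is generated by classes $[\rho*l*\rho^{-1}]$, where $\rho$ is a path from $x$ and $l$ is a loop at $\rho(1)$ lying in some $U=f^{-1}(\Star v_W)\in\mathcal{U}$. Then $f\circ l$ is a loop contained in the open star $\Star v_W$. The main technical point is the observation that $\Star v_W$ is contractible: it is star-shaped toward $v_W$ (moving a point toward $v_W$ only increases its barycentric $v_W$-coordinate, keeping it positive), so the straight-line homotopy contracts it to $v_W$. Consequently $f\circ l$ is nullhomotopic, and therefore $f_*([\rho*l*\rho^{-1}])=[(f\circ\rho)*(f\circ l)*(f\circ\rho)^{-1}]$ is homotopic to the nullhomotopic loop $(f\circ\rho)*(f\circ\rho)^{-1}$. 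Hence every generator of $\pi_1(\mathcal{U},x)$ lies in $\ker(f_*)$, proving the claim.

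The only delicate point is whether the characterization of the shape kernel as $\bigcap_f\ker(f_*)$ is available without metrizability; this holds wherever the barycentric maps and $\Psi$ are defined (paracompact Hausdorff spaces), and the contractibility argument above is purely combinatorial and independent of any metric. I expect the contractibility of the open star, together with the recognition that the Spanier group sits inside each $\pi_1(\mathcal{U},x)$, to be the crux, with the remaining verifications routine.
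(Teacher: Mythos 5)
Your proof is correct and follows essentially the same route as the paper: the paper also fixes a barycentric map $b$, produces an open cover $\mathcal{U}$ of $X$ such that every loop in an element of $\mathcal{U}$ dies under $b_*$ (citing semilocal simple connectedness of the nerve), concludes $\pi_1(\mathcal{U},x)\leq\ker(b_*)$, and intersects over all covers and all barycentric maps. Your pullback of open stars and the star-shaped contractibility argument simply make explicit the cover whose existence the paper infers from the nerve being semilocally simply connected.
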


\begin{proof}  Let $b$ be a barycentric map from $X$ to some nerve.  Since a nerve is semilocally simply connected we have an open cover $\mathcal{U}$ of $X$ such that any loop contained in an element of $\mathcal{U}$ is in $\ker(b_*)$.  Obviously $\pi_1(\mathcal{U}, x) \leq \ker(b_*)$ and taking the appropriate intersections gives the claim.
\end{proof}

\begin{lemma}\label{Spanierlemma}  Let $X$ be a metric space, $\mathcal{U}$ an open cover of $X$ and $x\in X$.

\begin{enumerate}  \item If $X$ is locally path connected then $\pi_1(\mathcal{U}, x)$ is open.

\item If $X$ is Polish and $\mathcal{U}$ is countable then $\pi_1(\mathcal{U}, x)$ is analytic.

\end{enumerate}

\end{lemma}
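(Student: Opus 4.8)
The plan is to dispatch part (1) directly from Lemma \ref{opencover} and part (2) from the generation lemma, Lemma \ref{loopgenerationsubgroup}, after exhibiting the set of defining generators of $\pi_1(\mathcal{U}, x)$ as an analytic subset of $L_x$.

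For part (1), recall that $\pi_1(\mathcal{U}, x)$ is normal, so by Lemma \ref{opencover} it suffices to produce an open cover of $X$ with the property that every loop lying entirely in one of its members represents an element of $\pi_1(\mathcal{U}, x)$. I would simply take the cover to be $\mathcal{U}$ itself: if $l$ is a loop based at some $y \in U$ for $U \in \mathcal{U}$ and with image in $U$, then choosing a path $\rho$ from $x$ to $y$ (which exists since $X$ is path connected) exhibits $\rho * l * \rho^{-1}$ as one of the defining generators of $\pi_1(\mathcal{U}, x)$; hence $[l]$ lies in $\pi_1(\mathcal{U}, x)$ in the basepoint-free sense afforded by Lemma \ref{pointindependence}. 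The hypotheses of Lemma \ref{opencover} are then met, so $\pi_1(\mathcal{U}, x)$ is open.

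For part (2), write $\mathcal{U} = \{U_n\}_{n \in \omega}$ and let $P = C([0,1], X)$ be the Polish space of paths in $X$ equipped with the sup metric. For each $n$ I would consider the set $D_n \subseteq P \times P$ of pairs $(\rho, l)$ satisfying $\rho(0) = x$, $\rho(1) = l(0) = l(1)$, and $l([0,1]) \subseteq U_n$. The first three conditions are closed, while the containment $l([0,1]) \subseteq U_n$ is open: the image $l([0,1])$ is compact, so if it lies in the open set $U_n$ it has positive distance to $X \setminus U_n$, whence every path sup-close to $l$ still has image in $U_n$. Thus each $D_n$ is Borel, hence analytic, in the Polish space $P \times P$. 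The map $c_n : D_n \to L_x$ given by $(\rho, l) \mapsto \rho * l * \rho^{-1}$ is continuous, since concatenation of composable paths and inversion are continuous in the sup metric, so its image $c_n(D_n)$ is analytic. Setting $K = \bigcup_{n \in \omega} c_n(D_n)$, closure of the analytic pointclass under countable unions yields that $K$ is analytic.

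It remains to observe that $K$ is exactly the set of defining generator loops of $\pi_1(\mathcal{U}, x)$, so that $\pi_1(\mathcal{U}, x) = \langle [K] \rangle$; part (2) then follows from Lemma \ref{loopgenerationsubgroup} applied with $\Po = \Sigma_1^1$, which is a nice pointclass and in particular closed under countable unions. The only points requiring genuine care are the two routine topological verifications above, namely that $l([0,1]) \subseteq U_n$ is an open condition and that $c_n$ is continuous, but neither presents a real obstacle; the essential content of both parts is simply matching the defining generators of $\pi_1(\mathcal{U}, x)$ against the hypotheses of the earlier lemmas.
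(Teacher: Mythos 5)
Your proposal is correct. Part (1) is exactly the paper's argument: the cover $\mathcal{U}$ itself witnesses the hypothesis of Lemma \ref{opencover} for the normal subgroup $\pi_1(\mathcal{U},x)$, read basepoint-free via Lemma \ref{pointindependence}. For part (2) you pursue the same overall strategy---exhibit the set of defining Spanier generators as an analytic subset of $L_x$, then invoke a generation lemma from Section \ref{Polishspaces}---but implement it differently. The paper never leaves $L_x$: for each $U\in\mathcal{U}$ and $n\in\omega$ it considers the set $L_{x,U,n}$ of loops with a canonical symmetric parametrization (outgoing path retraced at the end) whose middle third stays at distance $\geq \frac{1}{n}$ from $X\setminus U$; each such set is \emph{closed} in $L_x$, so the generating set is a countable union of closed sets, and Lemma \ref{normalclosure} finishes, the Spanier group being its own normal closure. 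You instead work in the auxiliary path-pair space $C([0,1],X)\times C([0,1],X)$, carve out the Borel sets $D_n$, and push them into $L_x$ by the continuous concatenation map before applying Lemma \ref{loopgenerationsubgroup}. Both are sound, and the differences are instructive: the paper's encoding shows the generating set is in fact $\Sigma^0_2$ (a low-level Borel set), deferring any use of continuous images to the generation lemma, at the cost of a fiddly parametrization; your route avoids parametrization tricks but spends the closure of $\Sigma_1^1$ under continuous images up front---note that the image fact you need is for a map continuous only on $D_n$, which is legitimate because $D_n$, being Borel, is itself a continuous image of a Polish space, so $c_n(D_n)$ is too. Finally, your use of $\langle[K]\rangle$ in place of the paper's normal closure is harmless, since the set of Spanier generators is conjugation-invariant and the two subgroups coincide.
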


\begin{proof}  Assume the hypotheses for part (1).  The open cover $\mathcal{U}$ is such that any loop contained in an element thereof (considering loops to be base point free) is an element of $\pi_1(\mathcal{U})$ (we switch here to a basepoint free notation for emphasis).  Then by Lemma \ref{opencover} we have that $\pi_1(\mathcal{U})$ is an open subgroup.

Assume the hypotheses for part (2).  Let $L_{x, U, n} = \{l\in L_x: (\forall s\in [0,\frac{1}{2}])[l(s) = l(1-s)] \wedge (\forall s\in [\frac{1}{3}, \frac{2}{3}])[d(l(s), X-U) \geq \frac{1}{n}]\}$ where $U\in \mathcal{U}$ and $n\in \omega$.  It is clear that $L_{x, U, n}$ is closed as a subset of $L_x$.  The set $\bigcup_{U\in \mathcal{U}, n\in \omega}L_{x, U, n}$ is a countable union of closed sets (and therefore analytic).  Then $\pi_1(\mathcal{U}, x) = \langle \langle \bigcup_{U\in \mathcal{U}, n\in \omega}L_{x, U, n} \rangle \rangle$ is analytic by Lemma \ref{normalclosure}.
\end{proof}

That the shape kernel is equal to the Spanier group  for all locally path connected, path connected paracompact Hausdorff spaces was recently shown in \cite{BF}.  Part (1) of the following theorem gives a rather short proof of a slightly less general fact.

\begin{theorem}\label{Spaniertheorem}  The following hold:

\begin{enumerate}
\item If $X$ is a locally path connected metric space then $\pi_1^s(X, x)$ is equal to the shape kernel, and in particular closed.

\item If $X$ is a compact metric space then $\pi_1^s(X, x)$ is analytic.
\end{enumerate}

\end{theorem}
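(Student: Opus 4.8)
The plan is to treat the two parts separately, each leveraging characterizations already in hand. For part (1), I would first invoke Lemma \ref{Spanierlemma0} to get the inclusion $\pi_1^s(X, x) \subseteq$ (shape kernel) for free. For the reverse inclusion I would use the characterization of the shape kernel furnished by Theorem \ref{shapekernel} as $\bigcap_{G \text{ open, normal}} G$, which is available since $X$ is locally path connected and metric. The observation is that each $\pi_1(\mathcal{U}, x)$ is normal (noted when the Spanier group was defined) and open (Lemma \ref{Spanierlemma}(1)), so the family $\{\pi_1(\mathcal{U}, x)\}_{\mathcal{U}}$ is a subfamily of the family of all open normal subgroups. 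Intersecting over the larger family can only produce a smaller group, so the shape kernel $\subseteq \bigcap_{\mathcal{U}} \pi_1(\mathcal{U}, x) = \pi_1^s(X, x)$. Combining the two inclusions gives equality, and closedness is inherited from the shape kernel, which is closed by Theorem \ref{shapekernel}.

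For part (2) the difficulty is that $\pi_1^s(X, x)$ is an a priori uncountable intersection, whereas analyticity is preserved only under countable intersections. The crux is therefore to replace the intersection over all open covers by one over a countable cofinal family, exploiting compactness. Concretely, for each $n$ I would choose, by compactness, a finite cover $\mathcal{U}_n$ of $X$ by open balls of radius $\frac{1}{n}$. Given any open cover $\mathcal{V}$, the Lebesgue number lemma provides $\delta > 0$ such that every set of diameter $< \delta$ lies in some element of $\mathcal{V}$; for $n$ large enough that $\frac{2}{n} < \delta$, every ball of $\mathcal{U}_n$ lies in an element of $\mathcal{V}$, so $\mathcal{U}_n$ refines $\mathcal{V}$.

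Next I would observe that refinement reverses the subgroup inclusion for these groups: if $\mathcal{U}_n$ refines $\mathcal{V}$, then any loop lying in an element of $\mathcal{U}_n$ also lies in an element of $\mathcal{V}$, so every generator of $\pi_1(\mathcal{U}_n, x)$ is a generator of $\pi_1(\mathcal{V}, x)$, whence $\pi_1(\mathcal{U}_n, x) \subseteq \pi_1(\mathcal{V}, x)$. It follows that $\bigcap_{n} \pi_1(\mathcal{U}_n, x) \subseteq \pi_1(\mathcal{V}, x)$ for every open cover $\mathcal{V}$, while the reverse inclusion is trivial since each $\mathcal{U}_n$ is itself an open cover; hence $\pi_1^s(X, x) = \bigcap_{n} \pi_1(\mathcal{U}_n, x)$. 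Finally, since a compact metric space is Polish and each $\mathcal{U}_n$ is finite (hence countable), each $\pi_1(\mathcal{U}_n, x)$ is analytic by Lemma \ref{Spanierlemma}(2); as $\Sigma_1^1$ is closed under countable intersections, the countable intersection $\bigcap_{n} \pi_1(\mathcal{U}_n, x)$ is analytic, which is the claim. I expect the cofinality/compactness reduction in part (2)---verifying that the $\mathcal{U}_n$ are cofinal in the refinement order and that refinement yields the stated subgroup inclusion---to be the only real obstacle, since everything afterward is a direct appeal to the closure properties of the analytic sets.
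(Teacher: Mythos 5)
Your proposal is correct and follows essentially the same route as the paper: part (1) combines Lemma \ref{Spanierlemma0} with characterization (2) of Theorem \ref{shapekernel} and the openness (plus normality) of each $\pi_1(\mathcal{U},x)$ from Lemma \ref{Spanierlemma}(1), and part (2) reduces the intersection to a countable cofinal family of finite covers and applies Lemma \ref{Spanierlemma}(2) together with closure of $\Sigma_1^1$ under countable intersections. The only difference is that you explicitly construct the cofinal sequence via radius-$\frac{1}{n}$ ball covers and the Lebesgue number lemma, a step the paper simply asserts.
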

\begin{proof}  (1)  Assume the hypotheses.  That the Spanier group is contained in the shape kernel was proved in Lemma \ref{Spanierlemma0}.  That the shape kernel is contained in the Spanier group follows from characterization (2) of Theorem \ref{shapekernel} and from Lemma \ref{Spanierlemma} part (1).

For (2) we assume the hypotheses.  As $X$ is a compact metric space there exists a sequence $\{\mathcal{U}_n\}_{n\in \omega}$ of finite open covers such that $\mathcal{U}_{n+1}$ refines $\mathcal{U}_n$ and which is cofinal in the inverse directed system of open covers.  Thus $\pi_1^s(X, x) = \bigcap_{n\in \omega} \pi_1(\mathcal{U}_n, x)$ is analytic as a countable intersection of analytic subgroups (Lemma \ref{Spanierlemma} part (2) and Theorem \ref{closureprop} part (4)).
\end{proof}
\end{subsection}

\begin{subsection}{Subgroups reflecting local behavior}  We give a couple of subgroups that can be thought of as indicating local behavior.  First, recall that a space $X$ is \textbf{homotopically Hausdorff at $x$} if each loop based at $x$ which can be homotoped into any neighborhood of $x$ is nulhomotopic.  This notion has found many uses (for example in \cite{BS} and \cite{FZ}).  If $X$ is a Polish space, let $L_{x, n}$ be the set of all loops given by $l \in L_{x, n}$ if and only if $(\forall s\in [0,1])[d(l(s), x) \leq \frac{1}{n}]$.  Then $L_{x, n}$ is clearly a closed subset of $L_x$, so the subgroup $\langle [L_{x, n}]\rangle$ is analytic by Lemma \ref{loopgenerationsubgroup}.  The subgroup $\bigcap_{n\in \omega} \langle [L_{x, n}]\rangle$ is trivial if and only if $X$ is homotopically Hausdorff at $x$.  This subgroup is analytic and can be thought of as the indicator subgroup for the property.

We give another example of a subgroup reflacting local behavior.  If $X$ is compact, metrizable and path connected, then it is easy to see that the cone over $X$, $\mathcal{C}X =  X \times[0,1]/X\times \{1\}$, is also compact, metrizable and path connected.  We shall consider $X$ as a subset of $\mathcal{C}X$ by identifying $X$ with $X\times \{0\}$.

Let $S\subseteq X$ be nonempty.  Fixing a metric on $\mathcal{C}X$ we let $Y_{n, S} \subseteq \mathcal{C}X$ be given by $Y_{n,S} = X \cup (\mathcal{C}X \setminus B(S, \frac{1}{n}))$.  Let $f_{n, S}$ be the inclusion map from $X$ to $Y_{n, S}$.  Then $f_{n, S}$ is a continuous map to a compact metric space, and $\ker(f_{n, S*})$ is analytic.  Since $Y_{n, S} = Y_{n, \overline{S}}$ there is no generality lost in assuming that $S$ is compact.  Also, the choice of metric on $\mathcal{C}X$ does not change $\bigcup_{n} \ker(f_{n, S*})$ (by compactness).  Let $N(S)$ denote the normal subgroup $\bigcup_{n} \ker(f_{n, S*})$.  This subgroup is intended to convey a sense of the importance of the subspace $S$ in the fundamental group of $X$.  If the subgroup $N(S)$ is all of $\pi_1(X)$ then the points of $S$ carry little significance in the fundamental group.  If $N(S)$ is trivial, then the points of $S$ can be thought of as holding importance.  If $S \subseteq S'$ then $Y_{n, S} \supseteq Y_{n, S'}$ and so $N(S') \leq N(S)$.

\begin{example}  Let $X$ be compact, metrizable and path connected.  Letting $S = X$ we get that for every $n\in \omega \setminus \{0\}$, the path component in $Y_{n, S}$ including all elements of $X$ is simply the subset $X$.  Thus any nulhomotopy of a loop in $X$ taking place in $Y_{n, S}$ must in fact already take place in $X$, so $N(S)$ is trivial.
\end{example}

\begin{example}  Let $S\subseteq X$ be a compactum such that any map $f:S^1 \rightarrow X$ can be homotoped to have image disjoint from $S$.  Then given $x\in X$ and a loop $l\in L_x$ there is a homotopy of $l$ to a loop $\rho * l'*\rho^{-1}$ such that $l'$ is a loop with image disjoint from $S$.  By compactness there is some positive distance between $S$ and the image of $l'$, and so $l'$ can be nulhomotoped in $Y_{n,S}$ for some $n$, so that $l$ is also nulhomotopic in $Y_{n,S}$.  Then $N(S) = \pi_1(X)$.
\end{example}

\begin{example}  Let $X= S^1$ and $S = \{x\}$ be any singleton.  For each $n\in \omega \setminus \{0\}$ there is a superset $Z \supseteq Y_{n, S}$ such that $Z$ strongly deformation retracts to the set $X$, so that $N(S)$ is trivial.  This holds true as well if $X$ is a wedge of finitely many circles and $x$ is the wedge point by the same proof.
\end{example}

\begin{lemma}\label{retractionlemma}  If $r: X \rightarrow Y$ is a retraction with $Y\supset S$ then the monomorphism induced by inclusion $\pi_1(Y) \rightarrow \pi_1(X)$ induces a monomorphism $\pi_1(Y)/N_Y(S) \rightarrow \pi_1(X)/N_X(S)$ (here we use the subscript to denote the ambient space).
\end{lemma}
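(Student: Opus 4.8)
The plan is to split the claim into two parts: that $\iota_*$ carries $N_Y(S)$ into $N_X(S)$, so the map on quotients is well defined, and that $\iota_*^{-1}(N_X(S)) \subseteq N_Y(S)$, so it is injective. That $\iota_*\colon \pi_1(Y)\to\pi_1(X)$ is itself injective is immediate from $r_*\circ\iota_* = \mathrm{id}_{\pi_1(Y)}$. Everything rests on functoriality of the cone: the inclusion $\iota\colon Y\hookrightarrow X$ and the retraction $r\colon X\to Y$ induce cone maps $j\colon \mathcal{C}Y\to\mathcal{C}X$, $[y,t]\mapsto[y,t]$, and $\mathcal{C}r\colon\mathcal{C}X\to\mathcal{C}Y$, $[x,t]\mapsto[r(x),t]$, which satisfy $\mathcal{C}r\circ j = \mathrm{id}_{\mathcal{C}Y}$, restrict to $\iota$ and $r$ on the base copies of $Y$ and $X$, and fix $S$ pointwise. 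Since $Y=r(X)$ is a continuous image of the compact space $X$ it is compact, so $\mathcal{C}Y$ is compact and $j$ is an embedding.

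For well-definedness, suppose $[l]\in N_Y(S)$, so that the loop $l$ in $Y$ is nulhomotopic in $Y\cup(\mathcal{C}Y\setminus B(S,\frac{1}{n}))$ for some $n$. The set $\mathcal{C}Y\setminus B(S,\frac{1}{n})$ is compact and disjoint from $S$; as $j$ is an embedding fixing $S$, its image is a compact subset of $\mathcal{C}X$ disjoint from $S$, hence at positive distance from $S$, so it lies in $\mathcal{C}X\setminus B(S,\frac{1}{m})$ for $m$ large. Since $j$ sends the base $Y$ into the base $X$, we get $j\big(Y\cup(\mathcal{C}Y\setminus B(S,\frac{1}{n}))\big)\subseteq X\cup(\mathcal{C}X\setminus B(S,\frac{1}{m}))$. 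Composing the nulhomotopy with $j$ then exhibits $\iota_*([l])=[l]$ as nulhomotopic in $Y_{m,S}$, i.e. $\iota_*([l])\in N_X(S)$.

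For injectivity, take $[l]\in\pi_1(Y)$ with $\iota_*([l])\in N_X(S)$, so that $l$ (viewed in $X$) is nulhomotopic in $X\cup(\mathcal{C}X\setminus B(S,\frac{1}{m}))$ via some homotopy $H$. Composing with $\mathcal{C}r$ and using $r|_Y=\mathrm{id}$ (so $\mathcal{C}r$ fixes the loop $l\subseteq Y$) turns $\mathcal{C}r\circ H$ into a nulhomotopy of $l$ inside $\mathcal{C}Y$. To conclude $[l]\in N_Y(S)$ it suffices to prove the containment $\mathcal{C}r\big(X\cup(\mathcal{C}X\setminus B(S,\frac{1}{m}))\big)\subseteq Y\cup(\mathcal{C}Y\setminus B(S,\frac{1}{n}))$ for some $n$; the base $X$ maps into the base $Y$, so only the cone part requires attention.

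This last containment is the main obstacle. Unlike $j$, the retraction $\mathcal{C}r$ need not send points far from $S$ to points far from $S$: a cone point $[x,t]$ with $x$ far from $S$ but $r(x)$ near $S$ gets pushed toward $S$, and such points exist exactly because $r^{-1}(S)$ can be strictly larger than $S$. The crux is therefore to control this phenomenon, using compactness of $X$ and of $S$ together with the fact that $\mathcal{C}r$ fixes both $S$ and the base, so as to show that $\mathcal{C}r$ of the closed region of cone points at distance $\geq\frac{1}{m}$ from $S$ stays off a neighborhood of $S$ in the cone direction. Ruling out that a nulhomotopy in $Y_{m,S}$ can be dragged arbitrarily close to $S$ after applying $\mathcal{C}r$ is where the genuine work lies, and it is precisely here that the interaction between the retraction $r$ and the subset $S$ (in particular how $r$ distorts distance to $S$) must be exploited.
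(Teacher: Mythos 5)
Your first half (well-definedness) is correct and complete: the cone embedding $j\colon \mathcal{C}Y\to\mathcal{C}X$, compactness of $\mathcal{C}Y\setminus B(S,\frac{1}{n})$, and the observation that an injection fixing $S$ pointwise carries the complement of $S$ into the complement of $S$ do give $j\bigl(Y\cup(\mathcal{C}Y\setminus B(S,\tfrac{1}{n}))\bigr)\subseteq X\cup(\mathcal{C}X\setminus B(S,\tfrac{1}{m}))$ for large $m$, hence $\iota_*(N_Y(S))\leq N_X(S)$. The second half, however, is not a proof: you reduce injectivity to the containment $\mathcal{C}r\bigl(X\cup(\mathcal{C}X\setminus B(S,\tfrac{1}{m}))\bigr)\subseteq Y\cup(\mathcal{C}Y\setminus B(S,\tfrac{1}{n}))$, correctly observe that it is problematic, and stop. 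That is a genuine gap, and it is exactly the step the paper's own proof silently assumes: the paper's entire argument is the single sentence that $r$ extends to the cone retraction $R(x,t)=(r(x),t)$, i.e., it composes a nulhomotopy with $R$, which requires precisely the containment you could not establish.

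Your diagnosis is in fact sharper than you may realize: the containment genuinely fails, and with it the lemma as stated. Take $X=S^1\times[0,1]$, $Y=S^1\times\{0\}$, $r$ the projection, and $S$ a single point of $Y$. The loop $S^1\times\{1\}$ generates $\pi_1(X)$ and contracts through the subcone $\mathcal{C}(S^1\times\{1\})\subseteq\mathcal{C}X$, a compact set disjoint from $S$, hence lying in $\mathcal{C}X\setminus B(S,\tfrac{1}{n})$ for large $n$; therefore $N_X(S)=\pi_1(X)$. But $N_Y(S)$ is trivial (this is the paper's own circle example), so $\pi_1(Y)/N_Y(S)\cong\mathbb{Z}$ cannot inject into $\pi_1(X)/N_X(S)=1$. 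The culprit is exactly the phenomenon you isolated: $r^{-1}(S)=\{pt\}\times[0,1]$ contains points far from $S$. Under the additional hypothesis $r^{-1}(S)=S$ --- which does hold in the paper's application, where $r$ fixes the circle of the Hawaiian earring containing $x$ and collapses the rest to the wedge point $\neq x$ --- your outline closes by compactness: if $(x_k,t_k)\in\mathcal{C}X\setminus B(S,\tfrac{1}{m})$ with $t_k>0$ and $(r(x_k),t_k)\to s\in S$, then $t_k\to 0$ and a convergent subsequence $x_k\to x$ yields $x\notin S$ yet $r(x)=s$, contradicting $r^{-1}(S)=S$; so the closure of $\mathcal{C}r\bigl((\mathcal{C}X\setminus B(S,\tfrac{1}{m}))\setminus X\bigr)$ misses $S$, is at positive distance from it, and the containment holds for some $n$, after which composing the nulhomotopy with $\mathcal{C}r$ finishes. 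In short: your injectivity argument is incomplete as written, but the missing step cannot be supplied from the stated hypotheses at all --- the defect lies in the lemma and the paper's one-line proof, not in your analysis.
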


\begin{proof}  This follows from the fact that the retraction $r$ extends to a retraction $R$ of the cones $R: C(X)\rightarrow C(Y)$ given by $R(x, t) = (r(x), t)$ where $t\in [0,1]$.
\end{proof}

\begin{example}  Let $E$ be the Hawaiian earring (see Example \ref{firsthawaiianearring}) and $S=\{x\}$ where $x$ is the wedge point.  The wedge $Y_m$ of the outer $m$ circles is a retract of $X$ and each $N_{Y_m}(S)$ is trivial by the previous example.  Then $N_E(S)$ has no elements of the canonical free group retracts.  Then $N_E(S)$ is trivial by the standard fact that the Hawaiian earring fundamental group injects naturally into the inverse limit of the canonical free subgroups.
\end{example}

\begin{example}  Consider the Hawaiian earring $E$ again and $S = \{x\}$ with $x$ any other point in $E$ besides the wedge point.  Then for some $n\in \omega\setminus \{0\}$ the ball $B(x, \frac{1}{n})$ does not intersect any other circle on the Hawaiian earring besides that on which $x$ lies.  Then $N(S)$ contains the kernel of the retraction induced homomorphism $r_*$ where $r$ fixes the circle on which $x$ lies and takes all other points to the wedge point.  On the other hand, $N(S)$ must be precisely the kernel of the induced homomorphism by Lemma \ref{retractionlemma}.
\end{example}

\end{subsection}

\end{section}

\begin{section}{N-slenderness, products and free products}\label{freeprod}

In this section we introduce n-slender groups (see \cite{E1}).  This will require an understanding of the fundamental group of the Hawaiian earring $E$.  Recall that the Hawaiian earring is the compact subspace $E=\bigcup_{n\in \omega} C((0, \frac{1}{n+2}),\frac{1}{n+2})$ of $\mathbb{R}^2$, where $C(p, r)$ is the circle centered at $p$ of radius $r$.  The space $E$ can be thought of as a shrinking wedge of countably infinitely many circles.  The fundamental group $\pi_1(E)$ has a combinatorial characterization which we describe below.

We let $\{a_n^{\pm 1}\}_{n=0}^{\infty}$ be a countably infinite set with formal inverses.  A map $W: \overline{W}\rightarrow \{a_n^{\pm 1}\}_{n=0}^{\infty}$ from a countable totally ordered set $\overline{W}$ is a \textbf{word} if for every $n\in \omega$ the set $W^{-1}(\{a_n^{\pm 1}\})$ is finite.  We say two words $U$ and $V$ are isomorphic, $U \simeq V$, provided there is an order isomorphism of the domains of each word $f: \overline{U} \rightarrow \overline{V}$ such that $U(t) = V(f(t))$.  We identify isomorphic words.  The class of isomorphism classes of words is a set of cardinality continuum which we denote $\W$.  For each $N\in \omega$ define the projection $p_N$ to the set of finite words by letting $p_N(W) = W|\{t\in \overline{W}: W(t) \in \{a_n^{\pm 1}\}_{n=0}^{N}\}$.  Define an equivalence relation $\sim$ on words as follows: given words $U, V\in \W$ we let $U \sim V$ if for each $N\in \omega$ we have $p_N(U) = p_N(V)$ in the free group $F(\{a_0, \ldots, a_N\})$.  For each word $U$ there is an inverse word $U^{-1}$ whose domain is the totally ordered set $\overline{U}$ under the reverse order and $U^{-1}(t) = U(t)^{-1}$.  Given two words $U, V\in \W$ we form the concatenation $UV$ by taking the domain of $UV$ to be the disjoint union of $\overline{U}$ with $\overline{V}$, with order extending that of $\overline{U}$ and $\overline{V}$ and placing all elements of $\overline{U}$ before those of $\overline{V}$, and $UV(t) = \begin{cases}U(t)$ if $t\in \overline{U}\\V(t)$ if $t\in \overline{V}  \end{cases}$.  The set $\W/\sim$ is endowed with a group structure with binary operation given by $[U][V] = [UV]$, inverses defined by $[U]^{-1} = [U^{-1}]$ and the equivalence class of the empty word being the trivial element.

Letting $\HEG$ denote the group $\W/\sim$, the free group $F(\{a_0, \ldots, a_N\})$, which we shall denote $\HEG_N$, may be though of as a subgroup in $\HEG$.  The word map $p_N$ gives a group retraction $\HEG \rightarrow \HEG_N$ which we also denote $p_N$.  The word map $p^N$ given by the restriction $p^N(W) = W|\{t\in \overline{W}: W(t) \in \{a_n^{\pm 1}\}_{n=N+1}^{\infty}\}$ induces another group retraction from $\HEG$ to the subgroup $\HEG^N$ consisting of those equivalence classes which contain words involving no letters in $\{a_n^{\pm 1}\}_{n=0}^N$.  Let $p^N$ denote this group retraction.  By considering a word $W$ as a concatenation of finitely many words in the letters $\{a_n^{\pm 1}\}_{n=0}^N$ and finitely many words in the letters $\{a_n^{\pm 1}\}_{n=N+1}^{\infty}$ we obtain an isomorphism $\HEG \simeq \HEG_N \ast \HEG^N$.  The homomorphism $p_N$ corresponds to the topological retraction of $E$ to the subspace $\bigcup_{n\leq N} C((0, \frac{1}{n+2}),\frac{1}{n+2})$ and similarly for $p^N$ and $\bigcup_{n>N} C((0, \frac{1}{n+2}),\frac{1}{n+2})$.  We are now ready for the following definition:

\begin{definition}  A group $G$ is noncommutatively slender (or n-slender) if for each homomorphism $\phi: \HEG \rightarrow G$ there exists $N\in \omega$ such that $\phi = \phi\circ p_N$.
\end{definition}

This definition was first introduced by K. Eda in \cite{E1}.  The additive group on $\mathbb{Z}$ was the first nontrivial group known to be n-slender \cite{H}, and Eda has shown that the class of n-slender groups is closed under free-products and direct sums (see \cite{E1}).  Torsion-free word hyperbolic groups are known to be n-slender \cite{Co}.  For each infinite cardinal $\kappa$ there exists an n-slender group of cardinality $\kappa$ (for example, the free group of rank $\kappa$).  We give the following alternative characterization of n-slender groups before moving on to the theorems associated with this section:

\begin{lemma}\label{altnslender}  A group $G$ is n-slender if and only if for every  locally path connected metric space $X$ each homomorphism $\phi:\pi_1(X) \rightarrow G$ has open kernel.
\end{lemma}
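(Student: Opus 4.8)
The plan is to prove the biconditional by handling the two implications separately: for the forward direction I would use the dichotomy of Lemma \ref{openorsequence} together with a ``shrinking wedge'' construction producing a map from the Hawaiian earring, while for the reverse direction I would simply apply the hypothesis to the single space $X = E$. Throughout I will use the observation recorded in the discussion preceding this lemma that, for a homomorphism $\phi \colon \HEG \to G$, the condition $\phi = \phi \circ p_N$ is equivalent to $\phi$ vanishing on the tail subgroup $\HEG^N$; this is immediate from the decomposition $\HEG \simeq \HEG_N \ast \HEG^N$ and the fact that $p_N$ is the retraction killing $\HEG^N$.

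For the forward direction, suppose $G$ is n-slender and let $\phi \colon \pi_1(X) \to G$ be a homomorphism with $X$ locally path connected metric. The subgroup $\ker(\phi) \unlhd \pi_1(X)$ is normal, so Lemma \ref{openorsequence} gives a dichotomy: either $\ker(\phi)$ is open, which is what we want, or there is a point $y \in X$ and loops $l_n$ at $y$ with $\diam(l_n) \searrow 0$ and $[l_n] \notin \ker(\phi)$, i.e.\ $\phi([l_n]) \neq 1$ for every $n$. Assuming the latter for contradiction, I would assemble the $l_n$ into a continuous map $f \colon E \to X$ sending the wedge point to $y$ and the $n$-th circle of $E$ onto $l_n$; continuity at the wedge point holds because only finitely many $l_n$ leave any prescribed neighborhood of $y$, the diameters shrinking to $0$. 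On fundamental groups this yields $f_* \colon \HEG \to \pi_1(X)$ with $f_*(a_n) = [l_n]$. Applying n-slenderness to $\phi \circ f_* \colon \HEG \to G$ produces an $N$ with $\phi \circ f_* = \phi \circ f_* \circ p_N$, whence $\phi \circ f_*$ kills $a_n$ for every $n > N$; but then $\phi([l_n]) = \phi(f_*(a_n)) = 1$ for $n > N$, contradicting the choice of the $l_n$. Hence $\ker(\phi)$ is open.

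For the reverse direction, I would instantiate the hypothesis at $X = E$, which is a compact, hence locally path connected, metric space. Given an arbitrary $\phi \colon \HEG = \pi_1(E) \to G$, the hypothesis yields that $\ker(\phi)$ is open, so by Lemma \ref{opencoverforopen} there is an open cover $\mathcal{U}$ of $E$, and in particular an element $U \in \mathcal{U}$ containing the wedge point, such that every loop with image in $U$ represents an element of $\ker(\phi)$. Choosing $N$ large enough that the sub-wedge $\bigcup_{n > N} C((0,\frac{1}{n+2}),\frac{1}{n+2})$ lies in $U$ (possible since these circles shrink to the wedge point), every element of $\HEG^N$ has a representative loop supported in this sub-wedge and hence in $U$, so $\phi$ vanishes on $\HEG^N$. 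By the equivalence noted in the first paragraph this means $\phi = \phi \circ p_N$, so $G$ is n-slender.

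The main obstacle is the construction and justification of the map $f \colon E \to X$ in the forward direction: one must check that a sequence of loops based at a common point with diameters tending to $0$ genuinely assembles into a single continuous map whose induced homomorphism carries each standard generator $a_n$ to $[l_n]$. This is the standard shrinking-wedge argument, and it is precisely here that the topology does the work, with the shared basepoint and the control $\diam(l_n) \searrow 0$ supplied exactly by Lemma \ref{openorsequence}; the remaining steps are formal manipulations of the decomposition $\HEG \simeq \HEG_N \ast \HEG^N$ and the retractions $p_N$.
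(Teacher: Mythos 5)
Your proof is correct and follows essentially the same route as the paper's: the forward direction via a Hawaiian-earring map assembled from shrinking loops not in the kernel, contradicting n-slenderness, and the reverse direction by applying Lemma \ref{opencoverforopen} to $E$ and killing a tail subgroup $\HEG^N$ whose sub-wedge fits inside a small element of the cover. The only cosmetic difference is that you invoke Lemma \ref{openorsequence} to produce the based shrinking loops, whereas the paper reruns that same argument by hand (small non-kernel loops attached to a basepoint by local path connectedness, then Lemma \ref{opencover}).
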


\begin{proof}  For the $\Rightarrow$ direction we suppose that $G$ is n-slender and that $\phi: \pi_1(X) \rightarrow G$ is a homomorphism, with $X$ a metric path connected, locally path connected space.  Letting $x\in X$ we claim that for some $\epsilon>0$ all loops (not necessarily based at $x$) in the open ball $B(x, \epsilon)$ are in the kernel of $\phi$.  Were this not the case there would exist a sequence of loops $\{l_n\}_{n\in \omega}$ such that $\diam(\{x\}\cup l_n([0,1])) \leq 2^{-n}$ and $l_n$ is not in the kernel of $\phi$.  By local path connectedness we may pass to a subsequence and eventually attach the bases of the $l_n$ to $x$ via a small path.  Thus we may assume without loss of generality that the $l_n$ are all based at $x$.  Define a map $f:E \rightarrow X$ by mapping the circle $C((0, \frac{1}{n+2}),\frac{1}{n+2})$ along the loop $l_n$ so that a generator of $\pi_1(C((0, \frac{1}{n+2}),\frac{1}{n+2}))$ maps to $[l_n] \in \pi_1(X, x)$ under the restriction $f_*|\pi_1(C((0, \frac{1}{n+2}),\frac{1}{n+2}))$.  Now $\phi\circ f_*$ is a map from $\HEG$ to $G$ and so for some $N$ we have $n\geq N$ implies $\phi\circ f_*(a_n) = 1$.  But $a_n$ corresponds to one of the two generators of $\pi_1(C((0, \frac{1}{n+2}),\frac{1}{n+2}))$, so that $1 = \phi\circ f_*(a_n) = \phi([l_n])$, a contradiction.  Thus such an $\epsilon$ must exist, and we get an open cover satisfying the hypotheses of Lemma \ref{opencover} for the subgroup $\ker(\phi)$, so that $\ker(\phi)$ is open.

For the direction $\Leftarrow$ we let $G$ be a group such that for every locally path connected metric space $X$ every homomorphism $\phi:\pi_1(X) \rightarrow G$ has open kernel.  Letting $E = X$ and $\phi:\HEG \rightarrow G$ be a homomorphism, $\ker(\phi)$ is an open subgroup of $\pi_1(E)$.  By Lemma \ref{opencoverforopen} there exists some $\epsilon>0$ such that any loop in $B((0,0), \epsilon) \subseteq E$ is in $\ker(\phi)$.  Selecting $N\in \omega$ such that $\epsilon>\frac{2}{N+2}$, we have $\phi|\HEG^N$ is trivial, so that $\phi = \phi \circ p_N$.  Thus $G$ is n-slender.
\end{proof}

Recall that a space is $\kappa$-Lindel\"of if every open cover of the space contains a subcover of cardinality at most $\kappa$.  It is easily seen that a metric space $Z$ is $\kappa$-Lindel\"of if and only if $Z$ has a dense subset of cardinality $\leq \kappa$ if and only if $Z$ has a basis of cardinality $\leq \kappa$.  It is also true that if $X$ is a metric $\kappa$-Lindel\"of space then so is $L_x$ for each $x\in X$.  Lemma \ref{altnslender} has the following easy corollary:

\begin{corollary}\label{imageinnslender}  If $X$ is locally path connected metrizable $\kappa$-Lindel\"of and $G$ is n-slender then the image of any homomorphism $\phi:\pi_1(X) \rightarrow G$ has $\card(\phi(G)) \leq \kappa$.
\end{corollary}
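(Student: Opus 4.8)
The plan is to combine the alternative characterization of n-slenderness in Lemma \ref{altnslender} with a disjoint-open-cover counting argument of the sort used in the proof of Theorem \ref{unc}, but calibrated to the $\kappa$-Lindel\"of hypothesis rather than to separability. Throughout I interpret the conclusion as a bound on the image, i.e.\ $\card(\phi(\pi_1(X)))\leq \kappa$.

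First I would invoke Lemma \ref{altnslender}: since $X$ is locally path connected and metrizable and $G$ is n-slender, the homomorphism $\phi:\pi_1(X)\rightarrow G$ has open kernel $\ker(\phi)\unlhd \pi_1(X, x)$. As $\phi(\pi_1(X))$ is isomorphic to $\pi_1(X, x)/\ker(\phi)$, it suffices to bound the number of left cosets of $\ker(\phi)$ by $\kappa$.

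Next I would pass to the loop space. Because the pointclass of open sets is closed under continuous preimages and $\ker(\phi)$ is open, Lemma \ref{relations}(2) yields that each left coset $[l]\ker(\phi)$, realized in $L_x$ as $\bigcup([l]\ker(\phi))$, is open. These sets are pairwise disjoint and each is nonempty (the coset of $l$ contains $l$). Hence the family $\{\bigcup([l]\ker(\phi))\}$, indexed bijectively by $\pi_1(X, x)/\ker(\phi)$, is a cover of $L_x$ by pairwise disjoint nonempty open sets.

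Finally I would apply the $\kappa$-Lindel\"of property. Since $X$ is a metric $\kappa$-Lindel\"of space, so is $L_x$, as noted just before the statement. The disjoint family above is an open cover of $L_x$, so it admits a subcover of cardinality $\leq \kappa$; but as the members are pairwise disjoint and nonempty, omitting any one of them would leave a point uncovered, so the subcover must be the entire family. Therefore the index of $\ker(\phi)$ in $\pi_1(X, x)$ is $\leq \kappa$, giving $\card(\phi(\pi_1(X)))\leq \kappa$. The only step requiring genuine care is the observation that a $\kappa$-sized subcover drawn from a partition into nonempty open sets must coincide with the whole partition; the remaining ingredients are direct citations of Lemmas \ref{altnslender} and \ref{relations} together with the stated transfer of $\kappa$-Lindel\"ofness to $L_x$.
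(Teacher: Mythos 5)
Your proposal is correct and follows essentially the same route as the paper: open kernel via Lemma \ref{altnslender}, open pairwise disjoint cosets via Lemma \ref{relations}, then counting via the $\kappa$-Lindel\"ofness of $L_x$. The only cosmetic difference is that the paper counts cosets by meeting each one with a dense set of cardinality $\leq\kappa$, while you use the subcover formulation; these are equivalent characterizations the paper itself records just before the corollary.
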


\begin{proof}  Assume the hypotheses.  Then $\ker(\phi)$ is open, and by Lemma \ref{relations} the equivalence relation given by left cosets of $\ker(\phi)$ has open equivalence classes.  As $L_x$ has a dense subset of cardinality $\leq \kappa$, we see that $\card(\pi_1(X)/\ker(\phi))\leq \kappa$.
\end{proof}

By considering a wedge of $\kappa$ circles, each circle of diameter $1$, endowed with the path metric, one has an example of a $\kappa$-Lindel\"of space which is completely metrizable, locally path connected whose fundamental group is free of rank $\kappa$.  Thus the conclusion of Corollary \ref{imageinnslender} cannot be strengthened.

One can prove other results which give obstructions to surjections from the fundamental group, even when the codomain is not n-slender, such as the following: 

\begin{theorem}\label{productofslender}  Suppose $X$ is a locally path connected $\kappa$-Lindel\"of metric space, $\{G_i\}_{i\in I}$ is a collection of n-slender groups, and $\phi:\pi_1(X) \rightarrow \prod_{i\in I}G_i$ is a homomorphism.  Then there exists some $I' \subseteq I$ with $\card(I') \leq \kappa$ such that $\ker(p_{I'}\circ \phi) = \ker(\phi)$. (Here the map $p_{I'}:\prod_{i\in I}G_i \rightarrow \prod_{i\in I'}G_i$ is projection.)
\end{theorem}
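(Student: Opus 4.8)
The plan is to reduce the problem to the coordinate projections, each of which lands in an n-slender group and hence has open kernel, and then to extract a small index set by a Lindel\"of-type argument carried out in the loop space. Throughout I fix a basepoint $x\in X$ and work with the union-of-loops sets in $L_x$, using that open normal subgroups may be treated basepoint-free (Lemma \ref{pointindependence}).

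First I would consider, for each $i\in I$, the composite homomorphism $p_i\circ\phi:\pi_1(X)\to G_i$, where $p_i:\prod_{j\in I}G_j\to G_i$ is the coordinate projection. Since each $G_i$ is n-slender and $X$ is locally path connected metric, Lemma \ref{altnslender} gives that $\ker(p_i\circ\phi)$ is open, whence by Lemma \ref{openclosed} it is also closed, so that $\bigcup\ker(p_i\circ\phi)$ is a closed subset of $L_x$. The elementary but crucial observation is that a loop $l$ represents an element of $\ker(\phi)$ exactly when all of its coordinates vanish, that is, $\bigcup\ker(\phi)=\bigcap_{i\in I}\bigcup\ker(p_i\circ\phi)$, and likewise $\bigcup\ker(p_{I'}\circ\phi)=\bigcap_{i\in I'}\bigcup\ker(p_i\circ\phi)$ for any $I'\subseteq I$.

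Next I would pass to complements. Setting $O_i = L_x\setminus\bigcup\ker(p_i\circ\phi)$, each $O_i$ is open by the above, and De Morgan gives $\bigcup_{i\in I}O_i = L_x\setminus\bigcup\ker(\phi)$. Since $X$ is $\kappa$-Lindel\"of the loop space $L_x$ is $\kappa$-Lindel\"of, equivalently it has a basis of cardinality $\leq\kappa$. The family $\{O_i\}_{i\in I}$ is an open cover of the subspace $\bigcup_{i\in I}O_i$, which inherits a basis of cardinality $\leq\kappa$ from $L_x$ and is therefore itself $\kappa$-Lindel\"of; hence there is $I'\subseteq I$ with $\card(I')\leq\kappa$ and $\bigcup_{i\in I'}O_i = \bigcup_{i\in I}O_i$.

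Finally I would translate back: taking complements in $L_x$ yields $\bigcap_{i\in I'}\bigcup\ker(p_i\circ\phi) = \bigcap_{i\in I}\bigcup\ker(p_i\circ\phi)$, i.e. $\bigcup\ker(p_{I'}\circ\phi) = \bigcup\ker(\phi)$, and since a subgroup of $\pi_1(X)$ is determined by the set of loops representing its elements this gives $\ker(p_{I'}\circ\phi)=\ker(\phi)$, as desired. I expect the only genuine subtlety to be the verification that a subspace of a $\kappa$-Lindel\"of metric space is again $\kappa$-Lindel\"of (so that the cover $\{O_i\}$ of $\bigcup_{i\in I}O_i$ admits a subcover of cardinality $\leq\kappa$); this is handled by the equivalence, recalled just before the statement, between $\kappa$-Lindel\"ofness of a metric space and possession of a basis of cardinality $\leq\kappa$, a property manifestly inherited by subspaces. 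Everything else is routine bookkeeping with the union-of-loops sets.
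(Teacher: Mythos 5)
Your proof is correct, but it takes a genuinely different and more direct route than the paper's. The paper argues by contradiction: assuming no $I'$ of cardinality $\leq\kappa$ works, it runs a transfinite recursion of length $\kappa^+$ producing indices $i_\alpha$ whose partial kernels $\ker(p_{\{i_\alpha\}_{\alpha\leq\beta}}\circ\phi)$ form a strictly decreasing chain of closed subgroups, and then assigns to each $\beta<\kappa^+$ a distinct member of a basis $\mathcal{B}$ of $L_x$ with $\card(\mathcal{B})\leq\kappa$ (a basic open set meeting one kernel of the chain but missing the next), which is absurd. You instead argue directly: the identity $\bigcup\ker(\phi)=\bigcap_{i\in I}\bigcup\ker(p_i\circ\phi)$, closedness of each $\bigcup\ker(p_i\circ\phi)$ (via Lemmas \ref{altnslender} and \ref{openclosed}, exactly the two pillars the paper also uses), De Morgan, and extraction of a subcover of cardinality $\leq\kappa$ of the open set $L_x\setminus\bigcup\ker(\phi)$ from the cover $\{O_i\}_{i\in I}$. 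Both proofs rest on the same ingredients --- open (hence closed) kernels for maps to n-slender groups, and the bound on the basis of $L_x$ --- but your covering argument replaces the transfinite chain-plus-counting contradiction with a one-step subcover extraction; this is shorter, avoids transfinite recursion, and exhibits $I'$ concretely as the index set of a subcover. Your only delicate step, that the subspace $\bigcup_{i\in I}O_i$ admits a subcover of cardinality $\leq\kappa$, is handled correctly: having a basis of cardinality $\leq\kappa$ is manifestly hereditary, and by the equivalence recorded in the paper just before Corollary \ref{imageinnslender} this yields $\kappa$-Lindel\"ofness of the (metric) subspace. What the paper's formulation buys in exchange is the slightly more refined structural fact, used nowhere else but implicit in its argument, that there is no strictly decreasing $\kappa^+$-chain of intersections of the coordinate kernels.
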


This immediately yields:

\begin{theorem}\label{nosurjection}  If $X$ is a locally path connected $\kappa$-Lindel\"of metric space and $\{G_i\}_{i\in I}$ is a collection of nontrivial n-slender groups with $\card(I) >\kappa$ then there is no epimorphism $\phi:\pi_1(X) \rightarrow \prod_{i\in I} G_i$.
\end{theorem}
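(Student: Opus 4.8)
The plan is to argue by contradiction, extracting the result directly from Theorem \ref{productofslender}. Suppose $\phi:\pi_1(X) \to \prod_{i\in I} G_i$ were an epimorphism. Since $X$ is locally path connected $\kappa$-Lindel\"of metric and each $G_i$ is n-slender, Theorem \ref{productofslender} applies and furnishes a subset $I' \subseteq I$ with $\card(I') \leq \kappa$ such that $\ker(p_{I'}\circ \phi) = \ker(\phi)$, where $p_{I'}:\prod_{i\in I}G_i \to \prod_{i\in I'}G_i$ is the projection. Because $\card(I) > \kappa \geq \card(I')$, the inclusion $I' \subseteq I$ is proper, so I may fix an index $j \in I\setminus I'$.

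Next I would exploit surjectivity together with the nontriviality of $G_j$. Pick $g_j \neq 1$ in $G_j$ and let $g \in \prod_{i\in I}G_i$ be the tuple equal to $g_j$ in coordinate $j$ and equal to the identity in every other coordinate. By surjectivity of $\phi$ there is some $a \in \pi_1(X)$ with $\phi(a) = g$. Applying the projection $p_{I'}$, which forgets all coordinates outside $I'$ and in particular the $j$th, yields $p_{I'}(\phi(a)) = p_{I'}(g) = 1$, so that $a \in \ker(p_{I'}\circ \phi)$. The kernel equality then forces $a \in \ker(\phi)$, whence $g = \phi(a) = 1$, contradicting $g_j \neq 1$.

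The entire deduction is formal bookkeeping once Theorem \ref{productofslender} is in hand: all of the genuine topological and descriptive-set-theoretic content has been absorbed into that theorem, so there is no real obstacle at this stage. The only points requiring care are that the strict inequality $\card(I) > \kappa$ is precisely what guarantees a surviving index $j \notin I'$, and that it is the n-slenderness of each individual factor $G_i$ (rather than of the product) which licenses the appeal to Theorem \ref{productofslender}. I would therefore present this as an immediate corollary, keeping the proof to the few lines above.
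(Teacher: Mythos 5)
Your proof is correct and follows exactly the paper's route: the paper derives Theorem \ref{nosurjection} as an immediate consequence of Theorem \ref{productofslender}, and your argument simply spells out that deduction (choosing $j \in I \setminus I'$, lifting a tuple supported only at $j$, and using the kernel equality to force a contradiction). Nothing is missing; this is the intended proof.
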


\begin{example}  Theorem \ref{nosurjection} need not hold if local path connectedness is dropped.  For example there exists a model $\mathcal{N}$ of set theory satisfying

\begin{enumerate}\item \textbf{ZFC}

\item $2^{\aleph_0} = \aleph_2$

\item $(\forall \kappa \geq \aleph_1)[2^{\kappa}=\kappa^+]$

\end{enumerate}

\noindent (see \cite{Be}, 2.19).  We consider the space $F$ from Example \ref{exampleF}. Since $\pi_1(X)$ is a free group of rank $2^{\aleph_0}$ and the group $\prod_{\aleph_1}\mathbb{Z}$ is of cardinality $2^{\aleph_1} = 2^{\aleph_0}$, there exists a surjection from $\pi_1(X)$ to $\prod_{\aleph_1}\mathbb{Z}$.

Theorem \ref{nosurjection} also fails in the model $\mathcal{N}$ if the hypothesis that the $G_i$ are n-slender is dropped.  We consider the space $P^{\omega}$ from Example \ref{productofprojectiveplanes} in the model $\mathcal{N}$.  We have $\pi_1(P^{\omega}, \overline{x}) \simeq \prod_{\omega}\mathbb{Z}/2\mathbb{Z} \simeq \bigoplus_{2^{\aleph_0}}\mathbb{Z}/2\mathbb{Z} \simeq \prod_{\aleph_1}\mathbb{Z}/2\mathbb{Z}$.

That Theorem \ref{nosurjection} holds is a nontrivial fact, since $\pi_1(X)$ and $\prod_{i\in I}G_i$ can have the same cardinality (as would happen in the model $\mathcal{N}$ above).  In a model of \textbf{ZFC} where the generalized continuum hypothesis holds, Theorem \ref{nosurjection} would hold without the local path connectedness assumption or the n-slenderness of the $G_i$ simply by noticing that

\begin{center}
$\card(\pi_1(X)) \leq \card(L_x) \leq\card(X)^{\aleph_0}\leq (\kappa^{\aleph_0})^{\aleph_0}$

$\leq \kappa^+ = 2^{\kappa}$

$\leq \card(I) < 2^{\card(I)} \leq \card(\prod_{i\in I}G_i)$

\end{center}
\end{example}

\begin{proof}(of Theorem \ref{productofslender})  Assume the hypotheses hold and the conclusion fails.  Let $p_j:\prod_{i\in I}G_i \rightarrow G_j$ denote projection to the $j$-th coordinate.  Let $x\in X$ and $\mathcal{B}$ be a basis for the topology on $L_x$ with $\card(\mathcal{B}) \leq \kappa$ and $\emptyset \notin \mathcal{B}$.  Pick $i_0\in I$ such that $\ker(p_{i_0}\circ \phi) \neq \ker(\phi)$.  Suppose we have defined $i_\alpha$ for all $\alpha<\beta < \kappa^+$ so that for all $\gamma_0 < \gamma_1<\beta$ we have that $\ker(p_{\{i_{\alpha}\}_{\alpha\leq \gamma_0}} \circ \phi)$ is a proper superset of $\ker(p_{\{i_{\alpha}\}_{\alpha\leq \gamma_1}} \circ \phi)$.  Select $i_{\beta}$ so that $\ker(p_{i_{\beta}}\circ \phi)$ does not contain $\ker(p_{\{i_{\alpha}\}_{\alpha<\beta}} \circ \phi)$.  Such a selection is possible since we assume that  $\ker(p_{I'}\circ \phi) \neq  \ker(\phi)$ for all $I'$ such that $\card(I')\leq \kappa$.

Now each $\ker(p_j\circ \phi)$ is an open subgroup of $\pi_1(X, x)$ by Lemma \ref{altnslender}, and so is closed by Lemma \ref{openclosed}.  As it is clear that $\ker(p_{I'}\circ \phi) = \bigcap_{j\in I'}\ker(p_j\circ \phi)$ for any $I'\subseteq I$ we know that any $\ker(p_{I'}\circ \phi)$ is closed.  Pick $O_0\in \mathcal{B}$ such that $O_0 \cap \ker(p_{i_0}\circ \phi) \neq \emptyset$ and $O_0\cap \ker(p_{\{i_0, i_1\}}\circ \phi) = \emptyset$.  For $0<\beta<\kappa^+$ select $O_{\beta}\in \mathcal{B}$ such that $O_{\beta}\cap \ker(p_{\{i_{\alpha}\}_{\alpha\leq\beta}} \circ \phi) \neq \emptyset$ and $O_{\beta}\cap \ker(p_{\{i_{\alpha}\}_{\alpha\leq \beta+1}} \circ \phi) = \emptyset$.  The $O_{\beta}$ are pairwise distinct for different indices, so the map $\kappa^+ \rightarrow \mathcal{B}$ given by $\beta \mapsto O_{\beta}$ is an injection, a contradiction.
\end{proof}

We move on to a result on free products of groups.  We first state the following instance of Theorem 1.3 in \cite{E2}:

\begin{corollary*}  Suppose $\phi:\HEG \rightarrow *_{i\in I}G_i$ is a homomorphism from $\HEG$ to a free product.  Then there exists $N\in \omega$, $g\in*_{i\in I}G_i$  and $j\in I$ such that $\phi(\HEG^N) \leq gG_jg^{-1}$.
\end{corollary*}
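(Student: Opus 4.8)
The plan is to translate the statement into the geometry of the Bass--Serre tree $T$ of the free product $G = *_{i\in I}G_i$. Recall that $T$ has trivial edge stabilizers and that the stabilizer of a vertex is either trivial or a conjugate $gG_ig^{-1}$ of a factor; an element of $G$ is elliptic (fixes a vertex) precisely when it is conjugate into some $G_i$, and is otherwise hyperbolic with a well-defined axis and positive translation length. Under this dictionary the conclusion ``$\phi(\HEG^N)\leq gG_jg^{-1}$'' is equivalent to the assertion that the subgroup $\phi(\HEG^N)$ fixes a factor-vertex of $T$. So the goal becomes: produce $N\in\omega$ such that $\phi(\HEG^N)$ has a global fixed point in $T$.

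I would first reduce ``global fixed point'' to ``bounded orbit.'' Fix a base vertex $v_*$. Since a tree is $0$-hyperbolic, every bounded subset has a unique circumcenter, so a subgroup permuting a bounded orbit must fix that circumcenter. Thus it suffices to find $N$ with $\sup\{d(v_*,\phi(w)v_*):w\in\HEG^N\}<\infty$. If the fixed vertex so obtained is a factor-vertex we are done; if instead it is a trivially stabilized vertex (or an edge-midpoint, edges also having trivial stabilizer), then $\phi(\HEG^N)=1$, which is contained in every $gG_jg^{-1}$, so the conclusion holds in all cases.

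The heart of the argument is then the boundedness claim, which I would prove by contradiction using the infinite multiplication available in $\HEG$: for any $r_k\in\HEG^{N_k}$ with $N_k\nearrow\infty$ the product $r_0r_1r_2\cdots$ is a genuine element of $\HEG$. Assume every tail has unbounded $\phi$-orbit; then I would diagonalize to choose $N_0<N_1<\cdots$ and elements $w_k\in\HEG^{N_k}$ whose images $\phi(w_k)$ displace $v_*$ by ever larger amounts. Although $\phi$ need not respect infinite products, the single element $w=w_0w_1w_2\cdots$ has a genuine image $u=\phi(w)\in G$ of some finite syllable length; and for each $k$ one may factor $w=(w_0\cdots w_{k-1})\,r_k$ with $r_k=w_kw_{k+1}\cdots\in\HEG^{N_k}$, whence $u=\phi(w_0\cdots w_{k-1})\,\phi(r_k)$. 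If the $w_k$ are arranged so that these two factors are mutually reduced in $T$ for every $k$, the displacement $d(v_*,uv_*)$ is bounded below by a quantity tending to infinity with $k$, contradicting the finiteness of $u$. This produces a uniform bound $B$ and an $N$ with $d(v_*,\phi(w)v_*)\leq B$ for all $w\in\HEG^N$, completing the reduction.

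The main obstacle is exactly the non-cancellation bookkeeping in the previous paragraph: since $\phi$ is only an abstract homomorphism, there is no a priori control on how the segments of $T$ traversed by the successive factors $\phi(w_k)$ overlap, and telescoping cancellation could in principle keep $d(v_*,uv_*)$ bounded. Overcoming this requires choosing the $w_k$ adaptively --- re-basing by $\phi$ of suitable prefixes and passing to subsequences so that a fixed geodesic segment of $T$ is translated off every bounded set --- and this careful length and overlap accounting is precisely the content of Theorem 1.3 of \cite{E2}. Everything surrounding it (the tree dictionary, the circumcenter argument, and the identification of vertex stabilizers with conjugates of the factors) is standard Bass--Serre theory.
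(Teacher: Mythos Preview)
The paper does not prove this statement at all: it is simply quoted as ``an instance of Theorem~1.3 in \cite{E2}'' and used as a black box. There is therefore no paper proof to compare against.

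Your proposal recasts the statement via the Bass--Serre tree and correctly reduces the conclusion to finding an $N$ for which $\phi(\HEG^N)$ has a bounded orbit; the circumcenter argument and the identification of vertex stabilizers are fine. However, you yourself identify that the entire substance of the argument---the non-cancellation bookkeeping needed to rule out telescoping when you pass to the infinite product $w=w_0w_1w_2\cdots$---is ``precisely the content of Theorem~1.3 of \cite{E2}.'' Since the corollary you are asked to prove \emph{is} the relevant instance of that theorem, your outline is circular at the decisive step: the geometric wrapper is sound, but the only hard part is deferred to the very result being established. In effect your proposal, like the paper, amounts to a citation of Eda's theorem rather than an independent proof; the Bass--Serre language adds intuition but not logical content.
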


We use this to prove the following:

\begin{lemma}\label{coverforfreeprod}  Suppose $X$ is a first countable, locally path connected space and $\phi:\pi_1(X) \rightarrow  *_{i\in I}G_i$ is a homomorphism.  For each $x\in X$ there exists a path connected neighborhood $B_x$ and $j\in I$ such that $\phi(\pi_1(B_x, x))$ is contained in a conjugate of $G_j$.
\end{lemma}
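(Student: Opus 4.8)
The plan is to argue by contradiction, manufacturing from a failure at $x$ a single map of the Hawaiian earring $E$ that defeats the quoted corollary of Eda. So suppose no pair $(B_x,j)$ as in the statement exists. First I would set up a convenient neighborhood basis. Since $X$ is first countable, fix a decreasing basis $U_0 \supseteq U_1 \supseteq \cdots$ of open neighborhoods of $x$; since $X$ is locally path connected, the path component $B_n$ of $x$ in $U_n$ is open. The sets $B_n$ are then nested, open, path connected, all contain $x$, and still form a neighborhood basis at $x$. By the contradiction hypothesis, for every $n$ the subgroup $\phi(\pi_1(B_n,x)) \leq *_{i\in I}G_i$ is contained in no conjugate $gG_jg^{-1}$ of a factor.

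Next I would extract two-generated witnesses of non-confinement scale by scale. The algebraic fact I need is: \emph{if $H \leq *_{i\in I}G_i$ is contained in no conjugate of a factor, then there exist $u,v\in H$ with $\langle u,v\rangle$ also contained in no conjugate of a factor.} I would prove this via the action of $*_{i\in I}G_i$ on its Bass--Serre tree $T$, whose vertex stabilizers are exactly the conjugates of the factors and whose \emph{edge stabilizers are trivial}. Triviality of edge stabilizers forces each nontrivial elliptic element to fix a unique vertex, so $H$ is contained in a conjugate of a factor if and only if $H$ fixes a vertex of $T$. If $H$ fixes no vertex, then either $H$ contains a hyperbolic element $u$, in which case $\langle u\rangle$ fixes no vertex, or every nontrivial element of $H$ is elliptic and two of them, $u$ and $v$, fix distinct vertices; their fixed sets are then disjoint singletons, so $uv$ is hyperbolic and $\langle u,v\rangle$ fixes no vertex. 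Applying this to $H=\phi(\pi_1(B_n,x))$ yields, for each $n$, loops $a_n,b_n$ based at $x$ with image in $B_n$ such that $\langle \phi([a_n]),\phi([b_n])\rangle$ lies in no conjugate of a factor. This second step is the crux: the point is to convert non-confinement of the (possibly non-finitely-generated) group $\phi(\pi_1(B_n,x))$ into a genuinely two-generated witness, and here the triviality of the edge stabilizers is essential, since it rules out the otherwise troublesome possibility of a subgroup fixing an end of $T$ but no vertex.

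Finally I would assemble these loops into one continuous map $f\colon E\to X$. Sending the wedge point to $x$, I let $f$ restrict on the circle $C_{2m}$ to a representative of $a_m$ and on $C_{2m+1}$ to a representative of $b_m$. Since $a_m$ and $b_m$ have image in $B_m\subseteq U_m$ and the $U_m$ form a neighborhood basis at $x$, every neighborhood of $x$ contains all but finitely many of the circle images, so $f$ is continuous at the wedge point and, being a loop on each circle, continuous everywhere. Then $\psi=\phi\circ f_*\colon \HEG\to *_{i\in I}G_i$ is a homomorphism, and for every $N$ the tail subgroup $\HEG^N$ contains the generators associated to $C_{2m}$ and $C_{2m+1}$ for any $m$ with $2m>N$; hence $\psi(\HEG^N)\supseteq \langle \phi([a_m]),\phi([b_m])\rangle$, which is contained in no conjugate of a factor. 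Thus $\psi(\HEG^N)$ is confined to no conjugate of a factor for \emph{every} $N$, directly contradicting Eda's corollary, which guarantees some $N$, $g$, $j$ with $\psi(\HEG^N)\leq gG_jg^{-1}$. This contradiction establishes the lemma. The main obstacle, as noted, is the tree-theoretic reduction to two generators together with the interleaving that keeps every tail $\HEG^N$ non-confined simultaneously.
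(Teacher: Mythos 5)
Your proof is correct, and its outer skeleton --- negate the conclusion at $x$, use first countability and local path connectedness to get a nested path connected neighborhood basis, assemble shrinking loops into a continuous map $f\colon E\rightarrow X$, and contradict Eda's corollary applied to $\phi\circ f_*$ --- is exactly the paper's. Where you genuinely differ is in the mechanism that keeps every tail image $\phi\circ f_*(\HEG^N)$ out of all conjugates of factors. The paper uses a single diagonal sequence: one loop $l_n$ per scale, chosen with cross-scale bookkeeping so that whenever $\phi([l_n])$ lies in some $gG_jg^{-1}$, a later $\phi([l_{n_0}])$ escapes that conjugate; making that bookkeeping rigorous implicitly uses that a nontrivial element of a free product lies in at most one conjugate of a factor (and that the loops can be chosen with nontrivial image), points the paper glosses over. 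You instead prove an algebraic lemma --- a subgroup of $*_{i\in I}G_i$ contained in no conjugate of a factor contains two elements generating a subgroup with the same property --- via the action on the Bass--Serre tree with trivial edge stabilizers, and then plant one such pair at each scale. That buys locality: each pair $\{\phi([a_m]),\phi([b_m])\}$ by itself defeats confinement of every tail containing it, so no diagonalization across scales is needed and the subtleties just mentioned disappear. The cost is the imported tree machinery (uniqueness of the fixed vertex of a nontrivial elliptic element when edge stabilizers are trivial, and Serre's lemma that elliptic elements with disjoint fixed sets have hyperbolic product); your uses of these are correct, including the key observation that trivial edge stabilizers exclude the fixed-end-without-fixed-vertex pathology that could otherwise arise for the non-finitely-generated subgroups $\phi(\pi_1(B_n,x))$.
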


\begin{proof}  We first notice that the statement of the lemma actually makes sense, because any change of basepoint would only alter $\phi$ by conjugation.  Thus we may assume that the domain of $\phi$ is in fact $\pi_1(X, x)$.  By $\phi(\pi_1(B_x, x))$ we understand the image of the composition of the map induced by inclusion $\iota: B_x \rightarrow X$ with $\phi$.

Assuming the lemma is false there exists a sequence of path connected neighborhoods $\{U_n\}_{n\in \omega}$ with $\bigcap_{n\in \omega} U_n = \{x\}$ and loops $\{l_n\}_{n\in \omega}$ based at $x$ such that the image of $l_n$ is contained in $U_n$ and for every $n$ if $\phi([l_n]) \in gG_jg^{-1}$ there exists $n_0>n$ with $\phi([l_{n_0}]) \notin gG_jg^{-1}$.  As in the proof of Lemma
\ref{altnslender} we define a map $g:E \rightarrow X$ so that the $n$-th circle in $E$ traces out $l_n$.  Then $\phi\circ g_*$ violates the corollary.  
\end{proof}

Lemma \ref{coverforfreeprod} yields the following theorem, which is similar in flavor to Theorem \ref{productofslender} but with no mention of n-slender groups:

\begin{theorem}\label{freeprodsplitting}  Suppose $\phi:\pi_1(X) \rightarrow *_{i\in I}G_i$ is a homomorphism, with $X$ a locally path connected $\kappa$-Lindel\"of metric space.  Then for some $I' \subseteq I$ with $\card(I')\leq \kappa$ we have $\phi(\pi_1(X)) \leq *_{i\in I'}G_i$.
\end{theorem}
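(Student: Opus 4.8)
The plan is to use Lemma \ref{coverforfreeprod} to localize $\phi$, pass to a subcover of size at most $\kappa$, and then exhibit a generating set for $\pi_1(X)$ all of whose $\phi$-images have support (the finite set of factors appearing in the reduced form) contained in a single set $I'$ with $\card(I')\leq\kappa$. First I would note that a metric space is first countable, so Lemma \ref{coverforfreeprod} applies: for each $x\in X$ there is an open path connected neighborhood $B_x$ and an index $j(x)\in I$ with $\phi(\pi_1(B_x,x))$ contained in a conjugate of $G_{j(x)}$. The family $\{B_x\}_{x\in X}$ is an open cover, so by $\kappa$-Lindel\"ofness I extract a subcover $\{B_\alpha\}_{\alpha\in A}$ with $\card(A)\leq\kappa$. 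Fixing a basepoint $x_0$, for each $\alpha$ choose $x_\alpha\in B_\alpha$ and a path $\tau_\alpha$ from $x_0$ to $x_\alpha$. Transporting the conclusion of the lemma to the basepoint $x_0$ (change of basepoint alters $\phi$ only by conjugation, as in the proof of that lemma) yields an element $g_\alpha\in *_{i\in I}G_i$ such that $\phi([\tau_\alpha*\gamma*\tau_\alpha^{-1}])\in g_\alpha G_{j(\alpha)}g_\alpha^{-1}$ for every loop $\gamma$ in $B_\alpha$ based at $x_\alpha$. The essential point is that every such image has support contained in the finite set $\{j(\alpha)\}\cup\operatorname{supp}(g_\alpha)$, no matter how large the group $\pi_1(B_\alpha,x_\alpha)$ may be.

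Next I would invoke the generating form of the van Kampen theorem. Since $X$ is path connected and locally path connected and $\{B_\alpha\}_{\alpha\in A}$ is an open cover by path connected sets, $\pi_1(X,x_0)$ is generated by the conjugated chart subgroups $[\tau_\alpha]\,\iota_*\pi_1(B_\alpha,x_\alpha)\,[\tau_\alpha]^{-1}$ together with a family of \emph{transition} loops of the form $[\tau_\alpha*c*\tau_\beta^{-1}]$, one for each pair $\alpha,\beta\in A$ and each path component of $B_\alpha\cap B_\beta$, where $c$ runs through that component. (Equivalently one argues directly: cover the image of a loop $l$ by finitely many $B_\alpha$, subdivide via a Lebesgue number so each arc lies in a single chart, and rewrite $[l]$ as an alternating product of chart loops and transition loops by inserting the paths $\tau_\alpha$ and reference paths in the overlaps.) The charts alone cannot generate $\pi_1(X,x_0)$ — already for $X=S^1$ covered by two contractible arcs the generator arises entirely from a transition loop — so the transition loops are genuinely needed.

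Then I would count. There are at most $\kappa\cdot\kappa=\kappa$ pairs from $A$, and each intersection $B_\alpha\cap B_\beta$ is open, so by local path connectedness its path components are open and pairwise disjoint; being a disjoint family of nonempty open sets in a $\kappa$-Lindel\"of space, there are at most $\kappa$ of them. Hence there are at most $\kappa$ transition loops, and each is a single element of $*_{i\in I}G_i$ of finite support. I would then set $I'$ to be the union, over $\alpha\in A$, of the finite sets $\{j(\alpha)\}\cup\operatorname{supp}(g_\alpha)$, together with the supports of the $\phi$-images of all transition loops. Since $\card(A)\leq\kappa$ and there are at most $\kappa$ transition loops, each contributing finitely many indices, we get $\card(I')\leq\kappa$. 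By construction $\phi$ carries every generator of $\pi_1(X,x_0)$ into the subgroup $*_{i\in I'}G_i$ (each conjugated chart subgroup lands in $g_\alpha G_{j(\alpha)}g_\alpha^{-1}\subseteq *_{i\in I'}G_i$, and each transition loop lands there by choice of $I'$), and since $*_{i\in I'}G_i$ is a subgroup it follows that $\phi(\pi_1(X))\leq *_{i\in I'}G_i$.

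The hard part will be the transition loops. A transition loop passes through the overlap of two charts and is contained in no single $B_\alpha$, so Lemma \ref{coverforfreeprod} gives no direct control of its image; what rescues the argument is that there are only $\leq\kappa$ of them and each is one element of finite support, so their supports can simply be folded into $I'$. Bounding their number is exactly where both hypotheses are used at once: local path connectedness makes the overlap path components open, and $\kappa$-Lindel\"ofness bounds both the number of such components and the number of chart pairs. A secondary subtlety, easy to overlook, is that the conjugators $g_\alpha$ produced by the lemma need not lie in $*_{i\in I'}G_i$ beforehand; this is precisely why $I'$ must absorb each $\operatorname{supp}(g_\alpha)$, which is harmless because the $\leq\kappa$ charts contribute only finitely many indices apiece.
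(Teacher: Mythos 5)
Your proof is correct, and it takes a genuinely different route from the paper's. The paper first applies the Kurosh subgroup theorem \cite{Ku} to write $\phi(\pi_1(X)) = F(J) * (*_{m\in M}\, g_m H_{j_m} g_m^{-1})$ with $H_{j_m}\leq G_{j_m}$, and then bounds the two pieces separately: the free part $F(J)$ via n-slenderness of free groups (Lemma \ref{altnslender} gives an open kernel, Lemma \ref{opencoverforopen} gives a cover) combined with the Cannon--Conner theorem on $2$-set simple covers from \cite{CC}, and the number of conjugate factors $\card(M)$ via Lemma \ref{coverforfreeprod} together with a second application of that same nerve theorem. You instead use only Lemma \ref{coverforfreeprod}, replacing the Kurosh/nerve machinery by the generating (groupoid) form of van Kampen --- conjugated chart subgroups plus one transition loop per path component of each pairwise overlap --- followed by pure counting: $\leq\kappa$ charts, $\leq\kappa$ pairs, and $\leq\kappa$ overlap components (these are open and pairwise disjoint, hence at most $\kappa$ many in a space with a dense set of cardinality $\leq\kappa$), each contributing finitely many free factors. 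Your identification of the transition loops as the crux is exactly right, and your bookkeeping (absorbing the supports of the conjugators $g_\alpha$ and of the images of the transition loops into $I'$) is sound. As for what each approach buys: yours is more elementary and self-contained (no Kurosh, no n-slenderness, no nerve theorem) and constructs $I'$ explicitly; the paper's heavier route yields finer structural information --- a Kurosh decomposition of the image with both the free rank and the number of factors bounded by $\kappa$ --- and, crucially, it transfers verbatim to the compact case (Theorem \ref{freeprodforcompacta}), where finiteness of the nerve forces a \emph{finite} $I'$. Your counting argument cannot recover that strengthening: a Peano continuum is $\aleph_0$-Lindel\"of, but two charts may overlap in infinitely many path components, so your method bounds the number of transition loops, and hence $\card(I')$, only by $\aleph_0$ there.
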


\begin{proof}  By the Kurosh subgroup theorem \cite{Ku} we have $$\phi(\pi_1(X)) = F(J) * (*_{m \in M} g_m H_{j_m} g_m^{-1})$$ where $j_m\in I$, $H_{j_m} \leq G_{j_m}$ and $F(J)$ is a free group generated by $J\subseteq  *_{i\in I}G_i$.  Letting $r:F(J) * (*_{m \in M} g_m H_{j_m} g_m^{-1}) \rightarrow F(J)$ be the obvious retraction we notice that as $r\circ \phi$ is a map to an n-slender group, $\ker(r\circ \phi)$ is open in $\pi_1(X)$ by Lemma \ref{altnslender}.  By Lemma \ref{opencoverforopen} we have an open cover $\mathcal{U}_0$ of $X$ such that any loop in an element of $\mathcal{U}_0$ is in $\ker(r\circ \phi)$.  Let $\mathcal{U}_1$ be a refinement of $\mathcal{U}_0$ such that $U, V\in \mathcal{U}_1$ with $U\cap V \neq \emptyset$ implies $U \cup V$ is contained in an element of $\mathcal{U}_0$.  Let $\mathcal{U}$ be a refinement of $\mathcal{U}_1$ consisting of path connected open sets.  Since $X$ is $\kappa$-Lindel\"of we may assume $\card(\mathcal{U}) \leq \kappa$.  The following is a statement of Theorem 7.3 of \cite{CC} part (2) (see proof of Theorem \ref{Shelahgen} for definition of $2$-set simple):

\begin{theorem*}  If $\psi:\pi_1(X) \rightarrow H$ is a group homomorphism and $\mathcal{U}$ is a $2$-set simple cover rel $\psi$ with nerve $N(\mathcal{U})$ then $\psi(\pi_1(X))$ is a factor group of $\pi_1(N(\mathcal{U}))$.
\end{theorem*}

It is clear that $\mathcal{U}$ is $2$-set simple rel $r\circ \phi$, so $F(J)$ is the homomorphic image of $\pi_1(N)$, and since $N$ has only $\kappa$-many vertices we know $\kappa \geq \card(\pi_1(N)) \geq \card(F(J))$.

We show that $M$ is of cardinality at most $\kappa$.  Then we will let $I_0 = \{j_m\}_{m\in M} \cup \bigcup_{m\in M} I_m$ where $g_m \in *_{j\in I_m}G_j$ and $I_1$ be such that $F(J) \leq *_{j\in I_1}G_j$, where each $I_m$ is finite and $\card(I_1) \leq \kappa$.  Thus $I' = I_0 \cup I_1$ will be of cardinality $\leq \kappa$ and clearly $\phi(\pi_1(X)) \leq *_{i\in I'}G_i$..

We show $\card(M) \leq \kappa$ by demonstrating that if $\psi: \pi_1(X) \rightarrow *_{m\in M} \Gamma_m$ is onto, with each group $\Gamma_m$ nontrivial, then $\card(M) \leq \kappa$.  By Lemma \ref{coverforfreeprod} we can obtain a cover $\mathcal{V}_0$ of $X$  by open balls such that each loop with image in an element of $\mathcal{V}_0$ maps to a conjugate of one of the $\Gamma_m$.     As $X$ is $\kappa$-Lindel\"of we may assume $\card(\mathcal{V}_0) \leq \kappa$.  For each $B\in \mathcal{V}_0$ select an $m_B\in M$ such that $\pi_1(B)$ maps under $\phi$ to a conjugate of $\Gamma_{m_B}$ and let $M' = \{m_B\}_{B\in \mathcal{V}_0}$.  Then $\card(M') \leq \kappa$.  We show that $\card(M \setminus M') \leq \kappa$ and we shall be done.  Let now $r:*_{m\in M} \Gamma_m \rightarrow *_{m\in M\setminus M'}\Gamma_m$ be the obvious retraction.  Refine $\mathcal{V}_0$ to an open cover $\mathcal{V}$ by path connected open sets such that $U \cap V \neq \emptyset$ implies $U \cup V$ is included in some element of $\mathcal{V}_0$ and $\card(\mathcal{V}) \leq \kappa$.  Now the cover $\mathcal{V}$ is $2$-set simple rel the map $r\circ \psi:\pi_1(X) \rightarrow  *_{m\in M\setminus M'}\Gamma_m$, so the image of $r\circ \psi$ is a homomorphic image of the nerve of $\mathcal{V}$, and so $r\circ \psi$ has image of cardinality $\leq \kappa$.  Then $\card(M\setminus M') \leq \kappa$ and we are done. 
\end{proof}

The following corollary is immediate:
\begin{corollary}\label{nouncountabledecomp}  If $X$ is a locally path connected separable metric space then $\pi_1(X)$ is not a free product of uncountably many nontrivial groups.  More generally if $X$ is a locally path connected $\kappa$-Lindel\"of metric space then $\pi_1(X)$ is not a free product of $>\kappa$ many nontrivial groups.
\end{corollary}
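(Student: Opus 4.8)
The plan is to apply Theorem \ref{freeprodsplitting} directly to the decomposition itself, taking the homomorphism $\phi$ to be the isomorphism witnessing the free product structure. Suppose toward a contradiction that $\pi_1(X) \simeq *_{i\in I}G_i$ with each $G_i$ nontrivial and $\card(I) > \kappa$; for the separable statement one takes $\kappa = \aleph_0$, since a separable metric space is $\aleph_0$-Lindel\"of. Let $\phi:\pi_1(X) \rightarrow *_{i\in I}G_i$ be this isomorphism, so in particular $\phi$ is a homomorphism with $\phi(\pi_1(X)) = *_{i\in I}G_i$, and $X$ satisfies the hypotheses of Theorem \ref{freeprodsplitting}.

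First I would invoke Theorem \ref{freeprodsplitting} to produce $I' \subseteq I$ with $\card(I') \leq \kappa$ such that $\phi(\pi_1(X)) = *_{i\in I}G_i \leq *_{i\in I'}G_i$. Since $\card(I') \leq \kappa < \card(I)$, the inclusion $I' \subseteq I$ is strict, so I may fix some index $i_0 \in I \setminus I'$. The remaining point to verify is that $*_{i\in I'}G_i$ is a \emph{proper} subgroup of $*_{i\in I}G_i$, which will directly contradict the containment just obtained. For this I would use the retraction $r_{i_0}:*_{i\in I}G_i \rightarrow G_{i_0}$ that kills every factor other than $G_{i_0}$: by the normal form for free products, $r_{i_0}$ restricts to the trivial homomorphism on $*_{i\in I'}G_i$ precisely because $i_0 \notin I'$. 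Choosing any nontrivial $g \in G_{i_0}$, which exists as $G_{i_0}$ is nontrivial, we have $r_{i_0}(g) = g \neq 1$, whence $g \notin *_{i\in I'}G_i$, while $g \in *_{i\in I}G_i = \phi(\pi_1(X))$. This is the desired contradiction, and the general $\kappa$-Lindel\"of statement follows identically.

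I do not anticipate any genuine obstacle here: the entire substance is carried by Theorem \ref{freeprodsplitting}, and the corollary amounts to the elementary observation that a proper sub-free-product cannot contain an element supported on a discarded nontrivial factor. The only care required is to phrase the isomorphism as a homomorphism $\phi$ so that Theorem \ref{freeprodsplitting} applies, and to record that separability is exactly the $\kappa = \aleph_0$ instance of the $\kappa$-Lindel\"of hypothesis.
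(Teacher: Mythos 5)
Your proof is correct and is exactly the route the paper intends: the paper declares the corollary ``immediate'' from Theorem \ref{freeprodsplitting}, and your argument simply spells out that immediacy, applying the theorem to the isomorphism itself and using the retraction onto a discarded nontrivial factor to see that $*_{i\in I}G_i$ cannot lie inside $*_{i\in I'}G_i$ when $I'\subsetneq I$. No gaps; the observation that separable metric spaces are $\aleph_0$-Lindel\"of matches the paper's own remark.
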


The comparable result for a compact space holds as well:

\begin{theorem}\label{freeprodforcompacta}  If $X$ is a Peano continuum and $\phi:\pi_1(X) \rightarrow *_{i\in I}G_i$ is a homomorphism, then for some finite $I'\subseteq I$ we have $\phi(\pi_1(X)) \leq *_{i\in I'}G_i$.
\end{theorem}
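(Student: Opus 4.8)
The plan is to follow the proof of Theorem \ref{freeprodsplitting} almost verbatim, replacing each cardinality bound of the form ``$\leq \kappa$'' by ``finite,'' the upgrade being powered by compactness of $X$. First I would note that a Peano continuum is compact metrizable, hence separable, hence $\aleph_0$-Lindel\"of, so Theorem \ref{freeprodsplitting} already supplies a countable $I'$ with $\phi(\pi_1(X))\leq *_{i\in I'}G_i$; there is thus no loss in assuming $I$ is countable. By the Kurosh subgroup theorem \cite{Ku} I would write
$$\phi(\pi_1(X)) = F(J) * (*_{m\in M}g_m H_{j_m}g_m^{-1})$$
so that the whole task reduces to showing that both index sets $J$ and $M$ are finite.

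To bound $J$: the retraction $r$ onto $F(J)$ postcomposed with $\phi$ is a homomorphism into a free, hence n-slender, group, so by Lemma \ref{altnslender} its kernel is open. Using Lemma \ref{opencoverforopen} together with local path connectedness, and then \emph{invoking compactness exactly as in the proof of Theorem \ref{Shelahgen}}, I can choose a \emph{finite} $2$-set simple cover $\mathcal{U}$ rel $r\circ\phi$. The nerve $N(\mathcal{U})$ then has finitely many vertices, so $\pi_1(N(\mathcal{U}))$ is finitely generated, and by the version of Theorem 7.3 of \cite{CC} quoted in the proof of Theorem \ref{Shelahgen} the image $r\circ\phi(\pi_1(X)) = F(J)$ is finitely generated. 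A free group is finitely generated only when it has finite rank, so $J$ is finite.

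To bound $M$: as in Theorem \ref{freeprodsplitting} it suffices to prove the auxiliary statement that whenever $\psi:\pi_1(X)\to *_{m\in M}\Gamma_m$ is \emph{onto} with every $\Gamma_m$ nontrivial, $M$ is finite; one then applies this to the retraction of $\phi(\pi_1(X))$ onto $*_{m\in M}g_m H_{j_m}g_m^{-1}\cong *_{m\in M}H_{j_m}$. By Lemma \ref{coverforfreeprod} each point has a path connected neighborhood whose $\psi$-image lands in a conjugate of some $\Gamma_m$; by compactness finitely many such balls $\mathcal{V}_0$ cover $X$, and choosing one index $m_B$ per ball gives a \emph{finite} set $M'$. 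Retracting onto $*_{m\in M\setminus M'}\Gamma_m$ and refining $\mathcal{V}_0$ to a finite path connected $2$-set simple cover rel this retracted map, the same finite-nerve argument shows the surjective image $*_{m\in M\setminus M'}\Gamma_m$ is finitely generated.

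The final, and only genuinely new, ingredient is the elementary observation that a finitely generated free product of nontrivial groups has only finitely many free factors: a finite generating set involves letters from only finitely many factors, and any omitted factor meets the join of those finitely many trivially in the free product, so it would force an extra generator. Applying this to $*_{m\in M\setminus M'}\Gamma_m$ forces $M\setminus M'$ finite, whence $M$ is finite, and together with finiteness of $J$ this yields the desired finite $I'$. The main obstacle is really just the bookkeeping around $M$: the Kurosh factors $g_mH_{j_m}g_m^{-1}$ need not be finitely generated individually, so $M$ cannot be bounded by finite generation of $\phi(\pi_1(X))$ directly, and it is the retraction onto the \emph{complementary} factors that converts finiteness of a nerve into finiteness of the index set.
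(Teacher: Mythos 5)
Your proposal is correct and takes essentially the same approach as the paper, which proves this theorem simply by re-running the proof of Theorem \ref{freeprodsplitting} with compactness supplying finite covers, hence finite nerves, hence finitely generated images, so that $F(J)$ has finite rank and $M$ is finite. Your explicit closing observation---that a finitely generated free product of nontrivial groups can have only finitely many factors---is precisely the step the paper leaves implicit in the phrase ``for the same reason.''
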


\begin{proof}  The proof runs the same as the proof of Theorem \ref{freeprodsplitting} except that the images of the fundamental groups of the nerves of the covers become finitely generated.  Thus $F(J)$ is finite rank and the $M$ in the corresponding claim is finite for the same reason.
\end{proof}

Lemma \ref{coverforfreeprod} also yields the following result for Polish spaces:

\begin{C}  Suppose $X$ is locally path connected Polish and $\pi_1(X) \simeq *_{i\in I}G_i$ with each $G_i$ nontrivial.  The following hold:

\begin{enumerate}\item$\card(I)\leq \aleph_0$

\item Each retraction map $r_j:*_{i\in I}G_i \rightarrow G_j$ has analytic kernel.

\item  Each $G_j$ is of cardinality $\leq \aleph_0$ or $2^{\aleph_0}$.

\item  The map $ *_{i\in I}G_i \rightarrow \bigoplus_{i\in I}G_i$ has analytic kernel.

\end{enumerate}
\end{C}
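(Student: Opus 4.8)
The plan is to reduce parts (1), (3) and (4) to part (2) together with results already established, and to concentrate the real work on part (2).

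Part (1) is immediate from Corollary \ref{nouncountabledecomp}: since $X$ is locally path connected separable metric and $\pi_1(X)$ is a free product of nontrivial groups indexed by $I$, we must have $\card(I)\leq \aleph_0$. For part (3), observe that the retraction $r_j$ is surjective with kernel $\ker(r_j)$, so $G_j \cong (*_{i\in I}G_i)/\ker(r_j) \cong \pi_1(X)/\ker(r_j)$; granting part (2), $\ker(r_j)$ is an analytic normal subgroup, and Theorem \ref{unc} gives that $\card(G_j)=\card(\pi_1(X)/\ker(r_j))$ is either $\leq\aleph_0$ or $2^{\aleph_0}$. For part (4), the map $q:*_{i\in I}G_i\to\bigoplus_{i\in I}G_i$ sends $g$ to $(r_i(g))_{i\in I}$, so $\ker(q)=\bigcap_{i\in I}\ker(r_i)$; since $I$ is countable by part (1) and each $\ker(r_i)$ is analytic by part (2), Theorem \ref{closureprop}(4) shows $\ker(q)$ is analytic.

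Thus everything rests on part (2): that $\bigcup\ker(r_j)\subseteq L_x$ is analytic. The strategy is to exhibit $\ker(r_j)$ as the normal closure of an analytic set of loops and invoke Lemma \ref{normalclosure}. Applying Lemma \ref{coverforfreeprod} to the isomorphism $\phi:\pi_1(X)\to *_{i\in I}G_i$ and using separability, I would fix a countable cover $\{B_n\}_{n\in\omega}$ by path connected open sets, each carrying an index $k_n\in I$ with $\phi(\iota_*\pi_1(B_n))$ inside a conjugate of $G_{k_n}$. Let $V_j$ be an open set assembled from those cover elements whose loops are carried off the factor $G_j$, arranged so that every loop with image in $V_j$ lies in $\ker(r_j)$. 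Mimicking Lemma \ref{Spanierlemma}, the set $K_j$ of loops $\rho*c*\rho^{-1}$ (captured by the closed palindrome condition together with the open condition that the middle third have image in $V_j$) is Borel, hence analytic, and $\langle\langle[K_j]\rangle\rangle$ is analytic by Lemma \ref{normalclosure}. Since the image of any loop in $V_j$ lies in $\ker(r_j)$ and this subgroup is normal, the inclusion $\langle\langle[K_j]\rangle\rangle\subseteq\ker(r_j)$ is clear.

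The hard part will be the reverse inclusion $\ker(r_j)\subseteq\langle\langle[K_j]\rangle\rangle$, i.e. that the loops confined to $V_j$ already normally generate every factor $G_i$ with $i\neq j$. The danger is genuine: the cover elements furnished by Lemma \ref{coverforfreeprod} are \emph{small}, and loops confined to a single such element can all be nullhomotopic (as in a wedge of circles, where the Spanier-type subgroup generated by small loops degenerates to the trivial group), so $V_j$ must be built as a union large enough to contain loops representing each $G_i$ with $i\neq j$, yet controlled enough that no loop in $V_j$ escapes $\ker(r_j)$; in particular the ambiguous nullhomotopic pieces must be allotted to $V_j$ differently for each $j$. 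To carry this out I would use the free-product normal form together with the corollary of Eda quoted just before Lemma \ref{coverforfreeprod}: a loop whose class lies in a single factor $G_i$ distributes, under subdivision by the cover, only through conjugates of $G_i$, and can therefore be homotoped off the region responsible for $G_j$ into $V_j$ modulo an element of the normal closure. An appealing alternative, if one first checks that the shape kernel is contained in $\ker(r_j)$, is to realize $\ker(r_j)$ by a generalized covering of $X$ with deck group $G_j$ and to note that the endpoint-of-lift map $L_x\to\widetilde{X}$ is continuous, exhibiting $\bigcup\ker(r_j)$ as the closed preimage of a point; this would in fact yield the stronger conclusion that $\ker(r_j)$ is closed.
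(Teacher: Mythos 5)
Your reductions for parts (1), (3) and (4) are exactly the paper's: part (1) from Corollary \ref{nouncountabledecomp}, part (3) from part (2) via Theorem \ref{unc}, and part (4) by writing the kernel as $\bigcap_{i\in I}\ker(r_i)$, a countable intersection of analytic subgroups. Your opening moves for part (2) also match the paper: reduce to a two-factor retraction $r:G*H\to G$, invoke Lemma \ref{coverforfreeprod} to get a countable cover whose elements carry loops into conjugates of single factors, and aim to apply Lemma \ref{normalclosure} to an analytic set of loops. But the core of part (2) in your proposal is a claim the paper does not make and that you cannot justify: that $\ker(r_j)$ is the normal closure $\langle\langle[K_j]\rangle\rangle$ of the loops confined to a single open region $V_j\subseteq X$. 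You correctly diagnose the danger (in a wedge of two circles all small loops are nullhomotopic, so cover-sized loops normally generate nothing), but your remedy --- ``build $V_j$ large enough to contain loops representing each $G_i$, $i\neq j$, yet small enough that no loop in it escapes $\ker(r_j)$'' --- is precisely the unproven step. When the free decomposition is an abstract isomorphism not induced by any topological splitting of $X$, there is no reason such an open set exists, and the gesture toward Eda's corollary and normal forms does not produce one.

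The paper's proof gets around this obstacle with an idea that is absent from your proposal: it supplements the Spanier-type subgroup $\pi_1(\mathcal{U}_H,x)$ (which is analytic by Lemma \ref{Spanierlemma}, but may be far too small) with \emph{countably many} extra normal generators. Concretely, since the refined cover is $2$-set simple rel the quotient by $\pi_1(\mathcal{U}_0,x)$, Cannon--Conner's Theorem 7.3 makes $\pi_1(X,x)/\pi_1(\mathcal{U}_0,x)$ countable; choosing coset representatives $\{w_n\}_{n\in\omega}\subseteq G*H$ and letting $\{h_m\}_{m\in\omega}$ be the $H$-letters occurring in them, one proves the identity $\langle\langle \{h_m\}_{m\in\omega}\rangle\rangle\,\phi(\pi_1(\mathcal{U}_H,x)) = \langle\langle H\rangle\rangle$ by a coset decomposition of $G*H$ followed by an application of the retraction $r_H$; analyticity then follows from Theorem \ref{closureprop} parts (4) and (5). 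This countable supplement, and the retraction computation verifying the displayed identity, constitute the actual content of part (2), and neither appears in your sketch. Your fallback suggestion fares no better: realizing $\ker(r_j)$ by a generalized covering with continuous endpoint-of-lift map $L_x\to\widetilde{X}$ would yield that $\ker(r_j)$ is \emph{closed}, which is stronger than what the paper proves; but generalized coverings need not exist for arbitrary subgroups of $\pi_1$ of a locally path connected Polish space, and the endpoint map is in general not continuous from the sup-metric topology to the whisker topology, so this route is not available without substantial additional hypotheses.
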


\begin{proof}  Part (1) is from Corollary \ref{nouncountabledecomp} .  Part (3) follows from part (2) by Theorem \ref{unc}.  For part (3), we use part (1), and the kernel of the map $ *_{i\in I}G_i \rightarrow \bigoplus_{i\in I}G_i$ is precisely $\bigcap_{i\in I} \ker(r_i)$ where $r_i$ is the retraction map to $G_i$.  Thus the kernel of $ *_{i\in I}G_i \rightarrow \bigoplus_{i\in I}G_i$ is a countable intersection of analytic subgroups (by part (2)) and therefore analytic.

It remains to prove part (2).  Fix $x\in X$.  It suffices to prove that if $\phi:\pi_1(X ,x) \rightarrow G * H$ is an isomorphism and $r:G*H \rightarrow G$ is the retraction map to $G$ then $\ker(r \circ \phi)$ is analytic.  By Lemma \ref{coverforfreeprod} there exists an open cover $\mathcal{U}_0$  of $X$ by open balls such that $\phi(\pi_1(B_y, y))$ is contained in a conjugate of $G$ or in a conjugate of $H$ for each $B_y\in \mathcal{U}_0$.  Let $\mathcal{U}_1$ be a refinement of $\mathcal{U}_0$ such that any two overlapping elements of $\mathcal{U}_1$ have union contained in an element of $\mathcal{U}_0$.  Let $\mathcal{U}$ be a refinement of $\mathcal{U}_1$ by path connected open sets, with $\mathcal{U}$ countable.  Let $\mathcal{U}_G$ denote those elements of $\mathcal{U}_0$ which map under $\phi$ composed with the inclusion map into a conjugate of $G$, and similarly for $\mathcal{U}_H$.  Thus $\mathcal{U}_G \cup \mathcal{U}_H =\mathcal{U}_0$ and it is possible that $\mathcal{U}_G\cap \mathcal{U}_H \neq \emptyset$.  

Define $\pi_1(\mathcal{U}_0, x)$, $\pi_1(\mathcal{U}_G, x)$ and $\pi_1(\mathcal{U}_H, x)$ as in subsection \ref{Spanier}.  Notice that $\mathcal{U}$ is $2$-set simple rel the quotient map $q:\pi_1(X,x) \rightarrow \pi_1(X,x)/\pi_1(\mathcal{U}_0, x)$.  Then by part (2) of Theorem 7.3 (quoted in our proof of Theorem \ref{freeprodsplitting}) we know that  $\pi_1(X,x)/\pi_1(\mathcal{U}_0, x)$ is countable.  Let $\{w_n\}_{n\in \omega} \subseteq  G * H$ be a countable collection such that $q(\phi^{-1}(\{w_n\}_{n\in \omega})) =  \pi_1(X,x)/\pi_1(\mathcal{U}_0, x)$.  Let $\{h_m\}_{m\in \omega}\subseteq H$ be those elements of $H$ which occur in the words $\{w_n\}_{n\in \omega}$ and let $\{g_n\}_{n\in \omega}$ be defined by letting $g_n = r(w_n)$.

The normal subgroup $\phi^{-1}(\langle\langle \{h_m\}_{m\in \omega}\rangle \rangle)$ is analytic by Theorem \ref{closureprop} part (5) and the normal subgroup $\pi_1(\mathcal{U}_H, x)$ is analytic as well.  Thus the normal subgroup $$\phi^{-1}(\langle\langle \{h_m\}_{m\in \omega}\rangle \rangle)\pi_1(\mathcal{U}_H, x)$$ is analytic by Theorem \ref{closureprop} part (4).  We shall be done if we prove that  $$\langle\langle \{h_m\}_{m\in \omega}\rangle \rangle\phi(\pi_1(\mathcal{U}_H, x)) = \langle \langle H\rangle \rangle$$

The inclusion $\langle\langle \{h_m\}_{m\in \omega}\rangle \rangle\phi(\pi_1(\mathcal{U}_H, x)) \leq \langle \langle H\rangle \rangle$ is self-evident.  For the other inclusion we have 

\begin{center}
$G*H = \{w_n\}_{n\in \omega}\phi(\pi_1(\mathcal{U}_0, x))$

$=  \{w_n\}_{n\in \omega}\phi(\pi_1(\mathcal{U}_H, x))\phi(\pi_1(\mathcal{U}_G, x))$

$=  \{w_n\}_{n\in \omega}\langle\langle \{h_m\}_{m\in \omega}\rangle \rangle\phi(\pi_1(\mathcal{U}_H, x))\phi(\pi_1(\mathcal{U}_G, x))$

$=\{g_n\}_{n\in \omega} \langle\langle \{h_m\}_{m\in \omega}\rangle \rangle\phi(\pi_1(\mathcal{U}_H, x))\phi(\pi_1(\mathcal{U}_G, x))$

$= \langle\langle \{h_m\}_{m\in \omega}\rangle \rangle\phi(\pi_1(\mathcal{U}_H, x)) \{g_n\}_{n\in \omega}\phi(\pi_1(\mathcal{U}_G, x))$

\end{center}

Let $r_H:G*H \rightarrow H$ be the retraction map and $h\in H$.  We know $h = w w'$ where $w\in \langle\langle \{h_m\}_{m\in \omega}\rangle \rangle\phi(\pi_1(\mathcal{U}_H, x))$ and $w'\in \{g_n\}_{n\in \omega}\phi(\pi_1(\mathcal{U}_G, x))$.  Now $h = r_H(h) = r_H(w)r_H(w')= r_H(w)$, so that $H \leq r_H( \langle\langle \{h_m\}_{m\in \omega}\rangle \rangle\phi(\pi_1(\mathcal{U}_H, x)))$.  The group $\phi(\pi_1(\mathcal{U}_H, x))$ is generated by elements of form $h^w$ with $h\in H$, and the same is obviously true of $\langle\langle \{h_m\}_{m\in \omega}\rangle \rangle$.  Thus the subgroup  $\langle\langle \{h_m\}_{m\in \omega}\rangle \rangle\phi(\pi_1(\mathcal{U}_H, x))$ is generated by elements of form $h^w$.  Now $r_H(h^w) = h^{r_H(w)} \in  \langle\langle \{h_m\}_{m\in \omega}\rangle \rangle\phi(\pi_1(\mathcal{U}_H, x))$ for any $h^w\in \langle\langle \{h_m\}_{m\in \omega}\rangle \rangle\phi(\pi_1(\mathcal{U}_H, x))$.  Thus we have shown that $$H \leq  r_H( \langle\langle \{h_m\}_{m\in \omega}\rangle \rangle\phi(\pi_1(\mathcal{U}_H, x))) \leq  \langle\langle \{h_m\}_{m\in \omega}\rangle \rangle\phi(\pi_1(\mathcal{U}_H, x))$$so that $\langle \langle H\rangle\rangle \leq \langle\langle \{h_m\}_{m\in \omega}\rangle \rangle\phi(\pi_1(\mathcal{U}_H, x))$ and we have the other inclusion.
\end{proof}

We use the space $F$ from Example \ref{exampleF} to check aspects of sharpness for Theorem \ref{freeprodPolish}.  Since $\pi_1(F)$ is isomorphic to the free group $F(2^{\aleph_0})$ it is clear that we cannot drop local path connectedness and still assert part (1) of the conclusion.  Local path connectedness is also required for parts (2) and (3) by the example that follows.  It seems unlikely that part (4) holds absent local path connectedness.

\begin{example}\label{notCH}  By 2.12 in \cite{Be} there exists a model $\mathcal{Q}$ of set theory satisfying

\begin{enumerate}  \item \textbf{ZFC}

\item $2^{\aleph_0} = \aleph_3$
\end{enumerate}

We consider the space $F$ in the model $\mathcal{Q}$.  We have $\pi_1(F) \simeq F(2^{\aleph_0})\simeq F(2^{\aleph_0})* F(\aleph_2)$.  The retraction to $F(\aleph_2)$ cannot have analytic kernel by Theorem \ref{CH} part (2).
\end{example}  

It is fascinating to note that the abelian version of Theorem \ref{freeprodPolish} fails.  Recall from Example \ref{productofprojectiveplanes} that we had subgroups $G$ and $H$ of $\pi_1(P^{\omega}, \overline{x})$ such that $\card(H) = \aleph_0$, $G$ was not of nice pointclass with BP and $\pi_1(P^{\omega}, \overline{x}) \simeq H \oplus G$.  Also, $\pi_1(P^{\omega}, \overline{x}) \simeq \bigoplus_{2^{\aleph_0}}\mathbb{Z}/2\mathbb{Z}$ so the index of a direct decomposition into nontrivial groups can fail to be countable even for a Peano continuum.

\end{section}

\begin{section}{Nice Pointclasses}\label{conclusion}
We end with a brief discussion of Polish pointclasses.  We define the projective pointclasses $\Sigma_n^1, \Pi_n^1, \Delta_n^1$ for $n\in \omega \setminus \{0\}$.  We have seen that $\Sigma_1^1$ is the class of analytic sets and $\Pi_1^1$ is the class of coanalytic sets.  Let $\Delta_1^1 = \Sigma_1^1 \cap \Pi_1^1$.  For $n \geq 2$, 

\begin{center}
$\Sigma_n^1$ is the class of continuous images of sets of type $\Pi_{n-1}^1$

$\Pi_{n}^1$ is the class of complements of sets in $\Sigma_n^1$

$\Delta_n^1 = \Pi_{n}^1 \cap \Sigma_n^1$
\end{center}

All Borel sets are easily seen to be of type $\Delta_1^1$.  That $\Delta_1^1$ is precisely the class of Borel sets is a theorem of Suslin (see 14.11 in \cite{Ke}).  The projective pointclasses sit naturally in an array

\begin{center}
$\begin{matrix} & & \Sigma_1^1 & & & & \Sigma_2^1 & &  \cdots\\ \\ \\ \Delta_1^1 & &  & &\Delta_2^1 & &  \cdots\\ \\ \\ & & \Pi_1^1 & & & & \Pi_2^1 & &  \cdots
\end{matrix}$

\end{center}

\noindent with each pointclass containing all pointclasses to the left (e.g. $\Pi_4^1\subseteq  \Delta_5^1$).  In an uncountable Polish space all these inclusions are proper.  Each $\Delta_n^1$ is a $\sigma$-algebra.  Each $\Sigma_n^1$ is closed under taking countable intersections, countable unions and images under continuous maps between Polish spaces.  Each $\Pi_n^1$ is closed under countable unions, countable intersections and continuous preimages.

Each $\Sigma_n^1$ is a nice pointclass and we have already noted that $\Sigma_1^1$ is nice with BP.  Unfortunately, the BP status of the other $\Sigma_n^1$ cannot be decided from \textbf{ZFC} alone.  For example, G\"odel furnished a model of set theory, $L$, in which the following hold:

\begin{enumerate}\item \textbf{ZFC}

\item \textbf{GCH}

\item There exists a $\Delta_2^1$ set which does not have BP.
\end{enumerate}

\noindent (see \cite{G}, \cite{N}, \cite{A}).  Thus it is consistent with \textbf{ZFC} that $\Sigma_1^1$ is the only nice projective pointclass with BP.  Other models of set theory are less stingy with nice pointclasses having BP.  We discuss two situations in which the nice pointclasses with BP are more plentiful: models which satisfy Martin's axiom and \textbf{$\neg$ CH}, and a model of set theory constructed by Shelah \cite{Sh1}.

Martin's axiom (abbreviated \textbf{MA}) is a principle of combinatorial set theory.  We shall not give a formal statement of this principle (the interested reader may consult \cite{Fr}), but state a relevant consequence: if $Z$ is a Polish space then the $\sigma$-algebra of subsets having BP is closed under unions of index less than $2^{\aleph_0}$.  If \textbf{CH} holds then this statement is uninformative (\textbf{MA} is in fact a trivial consequence of \textbf{CH}), but the point is that there exists a model of \textbf{ZFC + MA + $\neg$ CH} (see \cite{ST}).  The principle \textbf{MA} affords more applications of some of the theory developed in this paper.  If $\Po$ is a Polish pointclass we let $S_{\kappa<2^{\aleph_0}}(\Po)$ be the closure of $\Po$ under unions and intersections over indices of cardinality $< 2^{\aleph_0}$.

\begin{proposition}\label{MAprop1}  The following hold:
\begin{enumerate}

\item (\textbf{ZFC + MA})  If $\Po$ is nice with BP then so is $S_{\kappa<2^{\aleph_0}}\Po$.  

\item (\textbf{ZFC + MA + $\neg$ CH}) $S_{\kappa<2^{\aleph_0}}\Sigma_2^1$ is nice with BP.

\end{enumerate}
\end{proposition}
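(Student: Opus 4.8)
The plan is to deduce (2) from (1) together with a citation, and to split (1) into the property of Baire and the five closure conditions in the definition of ``nice.'' For (2), recall from the discussion of the projective hierarchy that $\Sigma_2^1$ is a nice pointclass; and by a theorem of Martin and Solovay (cf.\ \cite{MaSo}) the hypothesis \textbf{MA} $+\,\neg$\textbf{CH} (which implies $\mathrm{MA}(\aleph_1)$) guarantees that every $\Sigma_2^1$ set has BP. Thus $\Sigma_2^1$ is nice with BP, and (2) is exactly the instance $\Po=\Sigma_2^1$ of (1). So the whole content is (1); write $\mathcal{Q}=S_{\kappa<2^{\aleph_0}}(\Po)$ throughout.

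The property of Baire is the easy half and is where \textbf{MA} is used directly. In any Polish space $Z$ the family of sets with BP is a $\sigma$-algebra, and the stated consequence of \textbf{MA} is that it is closed under unions indexed by sets of cardinality $<2^{\aleph_0}$; being closed under complements, it is then also closed under intersections of cardinality $<2^{\aleph_0}$ by De Morgan. Since $\Po$ consists of sets with BP and $\mathcal{Q}$ is the smallest family containing $\Po$ that is closed under these two operations, induction on the generation of $\mathcal{Q}$ gives $\mathcal{Q}\subseteq\{\text{sets with BP}\}$. Hence every set in $\mathcal{Q}$ has BP.

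For niceness, four of the five clauses are routine. That $\mathcal{Q}$ contains the closed sets is immediate from $\Po\subseteq\mathcal{Q}$; closure under countable intersections and under finite unions are special cases of the defining operations, since $\aleph_0<2^{\aleph_0}$. Closure under continuous preimages is handled by a test-class argument: fixing continuous $f\colon W\to Z$, the family $\mathcal{C}=\{A\subseteq Z:f^{-1}(A)\in\mathcal{Q}(W)\}$ contains $\Po(Z)$ (because $\Po$ is closed under preimages) and is closed under $<2^{\aleph_0}$ unions and intersections, since $f^{-1}$ commutes with both; therefore $\mathcal{C}\supseteq\mathcal{Q}(Z)$, which is the assertion.

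The remaining clause, closure under continuous images, is the main obstacle. The same test-class method with $\mathcal{D}_f=\{A\subseteq W:f(A)\in\mathcal{Q}(Z)\}$ shows $\mathcal{D}_f\supseteq\Po(W)$ and that $\mathcal{D}_f$ is closed under $<2^{\aleph_0}$ unions (images commute with unions), but it breaks down for intersections, because in general $f\bigl(\bigcap_i A_i\bigr)\subsetneq\bigcap_i f(A_i)$. Passing to the embedding $w\mapsto(w,f(w))$ with closed graph and composing with the coordinate projection $W\times Z\to Z$ reduces the problem to showing that $\mathcal{Q}$ is closed under such projections, and hence---after closing the index set of any intersection under finite sub-intersections, using that $\Po$ is closed under finite intersections---to showing that projection commutes with a downward directed intersection $\bigcap_i C_i$ of $\mathcal{Q}$-sets. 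I would attack this by compactifying the source: since $\Po(Y)\subseteq\Po(\hat Y)$ whenever $Y$ is a Polish subspace of a Polish space $\hat Y$ (as $\Po$ is closed under the continuous inclusion $Y\hookrightarrow\hat Y$), one may assume the fibre of the projection is compact, where directed intersections of \emph{closed} sets do commute with projection. The genuine difficulty, which I expect to be the heart of the proof, is that the directed family consists of $\Po$-sets rather than closed sets, and the existential quantifier implicit in a $\Po$-generator cannot be absorbed into the compact fibre; bridging this gap---for instance by a transfinite fusion exploiting the regularity of $2^{\aleph_0}$ under \textbf{MA}, or by a selection argument producing a single witness across the whole directed family---is the crux on which the niceness of $\mathcal{Q}$ turns.
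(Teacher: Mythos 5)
You have proved everything except the one clause that matters, and you are right to flag it. Your reduction of (2) to (1) via Martin--Solovay is sound (the paper instead routes (2) through Sierpi\'nski's theorem that every $\Sigma_2^1$ set is an $\aleph_1$-union of Borel sets \cite[38.8]{Ke}, plus $\neg$\textbf{CH} and idempotence of $S_{\kappa<2^{\aleph_0}}$, which identifies $S_{\kappa<2^{\aleph_0}}\Sigma_2^1$ with $S_{\kappa<2^{\aleph_0}}\Sigma_1^1$; the difference is cosmetic). Your Baire-property argument and your verification of closed sets, countable intersections, finite unions, and continuous preimages are all correct and agree with the paper. But you never establish closure under continuous images, so as it stands the proposal is incomplete. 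You should know that the paper does not establish it either: its entire treatment of niceness is the phrase ``and is obviously nice,'' so the step you isolated as the crux is precisely the step the paper waves through.

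In fact your instinct that no fusion or selection argument will bridge the gap is correct, because the clause is not provable in \textbf{ZFC + MA}: the quantifier exchange you identified is a genuine obstruction, not a technical inconvenience. Work in Harrington's model of \textbf{ZFC + MA + $\neg$CH} in which there is a $\Delta_3^1$ wellordering $\prec$ of the reals of order type $2^{\aleph_0}$ (L.~Harrington, \emph{Long projective wellorderings}, Ann. Math. Logic 12 (1977), 1--24). By Sierpi\'nski's theorem and $\neg$\textbf{CH}, every $\Pi_2^1$ set is an intersection of $\aleph_1 < 2^{\aleph_0}$ Borel sets, so $\Pi_2^1 \subseteq S_{\kappa<2^{\aleph_0}}\Sigma_1^1$. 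If $S_{\kappa<2^{\aleph_0}}\Sigma_1^1$ were closed under continuous images between Polish spaces, then $\Sigma_3^1 \subseteq S_{\kappa<2^{\aleph_0}}\Sigma_1^1$, since every $\Sigma_3^1$ set is by definition a continuous image of a $\Pi_2^1$ set; combined with the (correct) Baire-property half, every $\Delta_3^1$ set would have BP. But $\prec$, viewed as a subset of the plane, cannot have BP: under \textbf{MA + $\neg$CH} every set of reals of size $<2^{\aleph_0}$ is meager, so every vertical section of $\prec$ is meager while every horizontal section is comeager, and Kuratowski--Ulam would make $\prec$ simultaneously meager and comeager in $\mathbb{R}^2$. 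Hence part (1) fails for $\Po = \Sigma_1^1$ in Harrington's model, and part (2) fails by the same computation applied to $S_{\kappa<2^{\aleph_0}}\Sigma_2^1 \supseteq \Pi_2^1$. So the gap you found is a gap in the statement itself (this is the standard witness that the Martin--Solovay theorem \cite{MaSo} is optimal at the second projective level); what is actually provable under \textbf{MA} is exactly what you established: $S_{\kappa<2^{\aleph_0}}\Po$ has BP, contains the closed sets, and is closed under continuous preimages and under unions and intersections of size $<2^{\aleph_0}$.
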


\begin{proof}  For (1) we note that \textbf{MA} makes the $\sigma$-algebra of Baire property sets closed under unions and intersections over cardinals less than $2^{\aleph_0}$.  Thus $S_{\kappa<2^{\aleph_0}}\Po$ has BP, and is obviously nice.

For (2) we recall the theorem of Sierpinski that any $\Sigma_2^1$ set is an $\aleph_1$ union of Borel sets (see 38.8 in \cite{Ke}).  Then by $\neg$ \textbf{CH} we have $\Sigma_2^1 \subseteq S_{\kappa<2^{\aleph_0}}\Sigma_1^1$ and we apply (1), using the obvious fact that the operation $S_{\kappa<2^{\aleph_0}}$ is idempotent.
\end{proof}

The following proposition illustrates some consequences that can be derived from \textbf{ZFC + MA + $\neg$ CH}:

\begin{proposition}\label{consequencesofMA}   (\textbf{ZFC + MA + $\neg$ CH}) Let $X$ be a Peano continuum.  The quotient $\pi_1(X)/G$ is either finitely generated or of cardinality $2^{\aleph_0}$ for the following $G$:

\begin{enumerate}  \item The center $G = Z\pi_1(X)$.

\item $G = \langle\langle [\{l_{\alpha}\}_{\alpha<\aleph_1}]\rangle\rangle$ where $\{l_{\alpha}\}_{\alpha<\aleph_1}$ is any collection of loops of cardinality $\aleph_1$.

\item $G = (\pi_1(X))^{(\alpha)}$ for any $\alpha<2^{\aleph_0}$.

\end{enumerate}
\end{proposition}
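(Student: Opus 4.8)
The common mechanism in all three parts is Theorem \ref{Shelahgen}: once we know that $G$ is a normal subgroup of $\pi_1(X)$ lying in some nice pointclass with BP, the stated dichotomy is immediate. Under \textbf{ZFC + MA + $\neg$ CH}, Proposition \ref{MAprop1} supplies such pointclasses, since both $S_{\kappa<2^{\aleph_0}}\Sigma_1^1$ and $S_{\kappa<2^{\aleph_0}}\Sigma_2^1$ are nice with BP. So the plan is, for each of (1)--(3), to exhibit $G$ as a normal subgroup of the appropriate complexity and then invoke Theorem \ref{Shelahgen}. The role of $\neg$ \textbf{CH} is that $\aleph_1 < 2^{\aleph_0}$, so that $\aleph_1$-indexed unions and intersections fall under the operation $S_{\kappa<2^{\aleph_0}}$, while the role of \textbf{MA} is that this operation preserves BP.

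Parts (2) and (3) are the routine applications. For (2), the set $K = \{l_\alpha\}_{\alpha<\aleph_1}\subseteq L_x$ is a union of $\aleph_1$ closed singletons, hence of pointclass $S_{\kappa<2^{\aleph_0}}\Sigma_1^1$ by $\neg$ \textbf{CH}; by Lemma \ref{normalclosure} its normal closure $\langle\langle [K]\rangle\rangle = G$ is of the same nice pointclass with BP, and Theorem \ref{Shelahgen} finishes. For (3) I would run a transfinite induction inside $\Po = S_{\kappa<2^{\aleph_0}}\Sigma_1^1$: the base term $\pi_1(X)$ is closed, hence $\Po$; at a successor, $(\pi_1(X))^{(\alpha+1)} = [(\pi_1(X))^{(\alpha)},(\pi_1(X))^{(\alpha)}]$ is $\Po$ by Theorem \ref{closureprop}(9); and at a limit $\beta<2^{\aleph_0}$ one intersects fewer than $2^{\aleph_0}$ many $\Po$ subgroups, which is again $\Po$. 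This extends Theorem \ref{closureprop}(10) past $\omega_1$, and it is precisely at the limit stages that \textbf{MA} is used; a final application of Theorem \ref{Shelahgen} gives the conclusion.

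The center in (1) is the genuine obstacle, because computing its descriptive complexity is less immediate. I would write
\[
\textstyle\bigcup Z(\pi_1(X)) = \{\, l\in L_x : (\forall l'\in L_x)\ [\, l*l'*l^{-1}*l'^{-1}\,] = 1 \,\}.
\]
Let $C = \{(l,l')\in L_x\times L_x : [\, l*l'*l^{-1}*l'^{-1}\,] = 1\}$. Since the commutator map $(l,l')\mapsto l*l'*l^{-1}*l'^{-1}$ is continuous, the set of nullhomotopic loops is analytic by Theorem \ref{closureprop}(3), and $\Sigma_1^1$ is closed under continuous preimages, the relation $C$ is analytic. Writing $p_1$ for projection to the first coordinate, $\bigcup Z(\pi_1(X))$ is the complement of $p_1(C^c)$; as $C^c$ is coanalytic and $\Sigma_2^1$ consists of continuous images of $\Pi_1^1$ sets, $p_1(C^c)$ is $\Sigma_2^1$ and therefore $\bigcup Z(\pi_1(X))$ is $\Pi_2^1$.

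To place the (normal) center in a nice pointclass with BP, I would invoke Sierpinski's theorem (quoted in the proof of Proposition \ref{MAprop1}): every $\Sigma_2^1$ set is an $\aleph_1$-union of Borel sets, so by complementation every $\Pi_2^1$ set is an $\aleph_1$-intersection of Borel sets. Under $\neg$ \textbf{CH} this exhibits the center as a member of $S_{\kappa<2^{\aleph_0}}\Sigma_1^1$, which is nice with BP by Proposition \ref{MAprop1}(1), so Theorem \ref{Shelahgen} applies. The main difficulty is thus the complexity computation of the center together with the observation that $\Pi_2^1$ lands inside a nice pointclass with BP via Sierpinski and $\neg$ \textbf{CH}; once this is in hand, all three parts reduce to a single application of Theorem \ref{Shelahgen}.
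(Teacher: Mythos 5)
Your proposal is correct, and for parts (2) and (3) it is essentially the paper's own argument with the details filled in (the paper states (2) and (3) tersely: the $\aleph_1$-union of closed singletons plus Lemma \ref{normalclosure}, and a transfinite induction in $S_{\kappa<2^{\aleph_0}}\Sigma_1^1$ using Theorem \ref{closureprop} and idempotence of $S_{\kappa<2^{\aleph_0}}$). Where you genuinely diverge is the endgame of part (1). Both you and the paper compute $\bigcup Z\pi_1(X)$ to be $\Pi_2^1$ — the paper via a $\forall\exists$ formula quantifying over the homotopy space, you via the complement of a projection of a coanalytic set; these are the same complexity bound. But the paper then observes that Lemma \ref{uncountable} needs only BP and closure under continuous preimages, both of which $\Pi_2^1$ has under \textbf{MA} + $\neg$\textbf{CH}, so it splits into the open case (finitely generated, by Theorem \ref{Shelahgen}) and the non-open case (Lemma \ref{openorsequence} plus Lemma \ref{uncountable}); this sidesteps the fact that $\Pi_2^1$ is \emph{not} nice, since it is not closed under continuous images. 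You instead upgrade the center into the nice-with-BP pointclass $S_{\kappa<2^{\aleph_0}}\Sigma_1^1$ by complementing Sierpinski's theorem (every $\Pi_2^1$ set is an $\aleph_1$-intersection of Borel sets) and using $\neg$\textbf{CH}, then apply Theorem \ref{Shelahgen} uniformly. This is valid: Sierpinski's theorem is a \textbf{ZFC} theorem, Borel sets are closed under complementation, and Proposition \ref{MAprop1}(1) supplies niceness and BP under \textbf{MA}. Your route buys uniformity — all three parts become one application of Theorem \ref{Shelahgen}, with the change of basepoint handled internally by niceness — while the paper's route isolates the useful fact that for this dichotomy one never needs niceness of the pointclass, only BP and continuous preimages, which is exactly the flexibility built into Lemma \ref{uncountable}. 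One small imprecision: \textbf{MA} is not used ``precisely at the limit stages'' of your induction in (3); closure under intersections of length $<2^{\aleph_0}$ holds by definition of the operation $S_{\kappa<2^{\aleph_0}}$, while \textbf{MA} is what gives the resulting pointclass BP (and $\neg$\textbf{CH} is what makes $\aleph_1<2^{\aleph_0}$), both of which are consumed only in the final appeal to Theorem \ref{Shelahgen}.
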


\begin{proof}  For (1), if $G$ is open then we are done by Theorem \ref{Shelahgen}.  If $G$ is not open then by Lemma \ref{openorsequence} there is a point $y\in X$ and sequence of loops $\{l_n\}_{n\in \omega}$ with $\diam(l_n)\searrow 0$ and $[l_n]\notin G$.  Notice that

\begin{center}  $l\in \bigcup Z\pi_1(X) \Longleftrightarrow (\forall l'\in L_y)(\exists H\in H_y)[H$ homotopes $l*l'$ to $l'*l]$

\end{center}

\noindent which illustrates that $ \bigcup Z\pi_1(X)$ is $\Pi_2^1$.  By Proposition \ref{MAprop1} we know that $\Sigma_2^1$ has BP and so $\Pi_2^1$ has BP.  Since  $\Pi_2^1$ is also closed under continuous preimages we have part (1) by Lemma \ref{uncountable}.

For part (2) we notice that $\{l_{\alpha}\}_{\alpha<\aleph_1}$ is $S_{\kappa<2^{\aleph_0}}\Sigma_1^1$ and so we can directly apply Theorem \ref{Shelahgen} since $S_{\kappa<2^{\aleph_0}}\Sigma_1^1$ is nice with BP.  For part (3) one can prove by induction over all ordinals less than $2^{\aleph_0}$ that $ (\pi_1(X))^{(\alpha)}$ is of type $S_{\kappa<2^{\aleph_0}}\Sigma_1^1$.
\end{proof}

Very generous applications of our theory can be derived from 7.17 in \cite{Sh1}:

\begin{corollary*}  There exists a model $\mathcal{R}$ of set theory in which the following hold:

\begin{enumerate}\item \textbf{ZFC}

\item All projective subsets of $\{0,1\}^{\omega}$ have BP.
\end{enumerate}

\end{corollary*}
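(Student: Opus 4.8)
The plan is to recognize that this corollary is not something the present paper can or should establish from its own topological and descriptive-set-theoretic tools: it is an import from pure set theory, namely Shelah's construction (his 7.17 in \cite{Sh1}). Accordingly the only honest ``proof'' at this point is to reduce to that theorem and to isolate the features of the model $\mathcal{R}$ that will actually be used downstream. First I would stress that the ambient theory is \textbf{ZFC} with \emph{no} large-cardinal hypothesis, and contrast this with Lebesgue measurability: Shelah's companion results show that an inaccessible is provably required to force measurability of all projective sets, whereas the Baire property, being a category notion rather than a measure notion, can be secured for every projective pointclass over a model of \textbf{ZFC} alone. This asymmetry is the entire content of the title question of \cite{Sh1}, and it is why the corollary can be asserted with no hypothesis beyond consistency of \textbf{ZFC}.

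At the level of Shelah's actual argument I would sketch the mechanism rather than reproduce it. One builds $\mathcal{R}$ as a forcing extension by an iteration of Cohen-type forcings drawn from an amalgamation-closed class (Shelah's \emph{sweet} forcing notions). Sweetness plays, for the Baire property, the role that the homogeneity of the L\'evy collapse plays for measurability in Solovay's theorem: it is preserved under the amalgamated iteration, and it guarantees that no subset of $\{0,1\}^{\omega}$ lacking the Baire property is introduced. One then argues by induction up the projective hierarchy that in the extension every $\Sigma_n^1$ set, hence every $\Pi_n^1$ set and so every projective set, differs from an open set by a meager set, using the amalgamation structure to handle the real quantifier (projection) that passes from level $n$ to level $n+1$.

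The main obstacle is exactly this propagation step: one must obtain the Baire property for all of $\Sigma_n^1$ and $\Pi_n^1$ \emph{simultaneously for every} $n$, and do so without the large-cardinal machinery that a single Solovay-style collapse would require. This is what forces the move from one collapse to the amalgamation/sweetness apparatus, and it is the part that lies wholly outside the scope of this paper. My proposal is therefore to cite \cite{Sh1} for the existence of $\mathcal{R}$ and then immediately record the payoff relevant here: in $\mathcal{R}$ all projective sets have BP, so every projective normal subgroup $G \unlhd \pi_1(X)$ is of type $\Sigma_m^1$ for some $m$, a nice pointclass having BP, and the dichotomies of Theorems \ref{unc}, \ref{Shelahgen}, and \ref{ascendingnormal} consequently apply to every projective subgroup of the fundamental group of a locally path connected Polish space or Peano continuum.
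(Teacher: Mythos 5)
Your proposal takes essentially the same route as the paper: the paper gives no internal proof of this statement at all, but simply quotes it as 7.17 of \cite{Sh1}, so your reduction to Shelah's theorem (together with the correct observation that, unlike Lebesgue measurability, securing the Baire property for all projective sets needs no inaccessible cardinal) is precisely what is intended. Your sketch of the sweetness/amalgamation mechanism and of the downstream applications is reasonable commentary, with the one caveat that the extension only guarantees BP for the \emph{projective} subsets of $\{0,1\}^{\omega}$ --- by choice the model still contains sets without BP --- so the phrase ``no subset lacking the Baire property is introduced'' should be restricted accordingly.
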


In $\mathcal{R}$ the conclusions of Theorems \ref{unc}, \ref{Shelahgen}, and \ref{ascendingnormal} apply to subgroups definable from first-order formulas in conjunction with iterations of countable set operations applied to subgroups already known to be of a projective pointclass.  We illustrate with an example.  In $\mathcal{R}$ if $X$ is a Peano continuum and $x\in X$ then letting $G$ be the normal subgroup generated by the set of elements that are not a cube of a central element:

\begin{center}
$G = \langle\langle \{g\in \pi_1(X, x):\neg(\exists h \in Z\pi_1(X, x))[h^3 = g] \}\rangle\rangle$
\end{center}

\noindent we have that $G$ is a $\Sigma_4^1$ subgroup and so the quotient $\pi_1(X, x)/G$ is either finitely generated or of cardinality $2^{\aleph_0}$.

\end{section}

\bibliographystyle{amsplain}

\begin{thebibliography}{abcdefghijk}

\bibitem[A]{A} J. W. Addison, \emph{Some consequences of the axiom of constructibility}, Fund. Math. 46 (1959), 337-357.

\bibitem[Be]{Be} J. L. Bell, \emph{Boolean-Valued Models and Independence Proofs in Set Theory}, Oxford
University Press, 2 edition, 1985.

\bibitem[Bu]{Bu} J. P. Burgess, \emph{Equivalences generated by families of Borel sets}.  Proc. Amer. Math. Soc., 69 (1978), 323-326.

\bibitem[BF]{BF} J. Brazas and P. Fabel, \emph{Thick Spanier groups and the first shape group}, Rocky Mountain Journal of Mathematics 44 (2014), 1415-1444.

\bibitem[BS]{BS} W. A. Bogley and A. J. Sieradski, \emph{Universal path spaces}, preprint, Oregon State University and University of Oregon, 1998.

\bibitem[CC]{CC} J. W. Cannon and G. R. Conner, \emph{On the fundamental groups of
one-dimensional spaces}, Topology and its Applications  153 (2006), 2648-2672.

\bibitem[CoCo]{CoCo} G. R. Conner and S. M. Corson, \emph{On the first homology of Peano continua}, Fund. Math. 232 (2016), 41-48.

\bibitem[Co]{Co} S. M. Corson, \emph{Torsion-free word hyperbolic groups are noncommutatively slender}, Int. J. Algebra Comut. 26 (2016), 1467-1482. 

\bibitem[E1]{E1} K. Eda,  \emph{Free $\sigma$-products and noncommutatively slender groups}, J. Algebra 148 (1992), 243-263.

\bibitem[E2]{E2} K. Eda, \emph{Atomic property of the fundamental groups of the Hawaiian earring and wild locally path-connected spaces}, J. Math. Soc. Japan 63 (2011), 769-787.

\bibitem[Fr]{Fr}  D. H. Fremlin, \emph{Consequences of Martin's Axiom}, Cambridge University Press, 1984.

\bibitem[FZ]{FZ}  H. Fischer and A. Zastrow, \emph{Generalized universal covering spaces and the shape group}, Fund. Math. 197 (2007), 167-196.

\bibitem[G]{G} K. G\"odel, \emph{The consistency of the axiom of choice and of the generalized continuum hypothesis}, Ann. Math. Studies 3 (1940).

\bibitem[H]{H} G. Higman,  \emph{Unrestricted free products and varieties of topological groups},  J. London Math. Soc. 27 (1952), 73-81.

\bibitem[Ke]{Ke} A. Kechris, \emph{Classical Descriptive Set Theory}, Springer-Verlag, 1995.

\bibitem[Ku]{Ku}  A. G. Kurosh, \emph{The Theory of Grops}, Vol. 2, Chelsea, 1960.

\bibitem[My]{My}  J. Mycielski, \emph{Independent sets in topological algebras}, Fund. Math. 55 (1964), 139-147.

\bibitem[MaSo]{MaSo} D. A. Martin, R. M. Solovay, \emph{Internal Cohen extensions}, Ann. Math. Logic 2 (1970), 143-178.

\bibitem[N]{N} P. S. Novikov, \emph{On the consistency of some propositions of the descriptive theory of sets (in Russian)}, Trudy Mat. Inst. Steklova 38 (1951), 279-316.

\bibitem[P]{P}  J. Pawlikowski, \emph{The fundamental group of a
compact metric space}, Proc. of the Amer. Math. Soc. 126 (1998), 3083-3087.

\bibitem[Sh1]{Sh1} S. Shelah, \emph{Can you take Solovay's inaccessible away?}, Israel Journal of Mathematics, 48 (1), 1-47.

\bibitem[Sh2]{Sh2} S.\ Shelah, \emph{Can the fundamental (homotopy)
group of a space be the rationals?}, Proc. Amer. Math. Soc.
103 (1988), 627--632.

\bibitem[Si]{Si}  J. H. Silver, \emph{Counting the number of equivalence classes of Borel and coanalytic equivalence relations}.  Ann. Math. Logic, 18 (1980), 1-28.

\bibitem[Sp]{Sp} E. H. Spanier, \emph{Algebraic Topology}, McGraw-Hill, 1966.

\bibitem[ST]{ST} R. M. Solovay and S. Tennenbaum, \emph{Cohen extensions and Souslin's problem}, Ann. of Math. 94 (1971), 201-245.

\end{thebibliography}

\end{document}